\theoremstyle{plain}
\newtheorem{proposition}{Proposition}[section]
\newtheorem{theorem}[proposition]{Theorem}
\newtheorem{lemma}[proposition]{Lemma}
\newtheorem{corollary}[proposition]{Corollary}
\theoremstyle{definition}
\newtheorem{definition}[proposition]{Definition}
\theoremstyle{remark}
\newtheorem{remark}[proposition]{Remark}
\newtheorem*{conject}{Conjecture}
\newtheorem*{question}{Question}
\newtheorem*{rem}{Remark}
\newtheorem*{acknow}{Acknowledgements}
 \numberwithin{equation}{section}
\DeclareMathOperator{\Aff}{Aff}
\DeclareMathOperator{\Span}{Span}
\DeclareMathOperator{\Cc}{\mathcal{C}}
\DeclareMathOperator{\Hc}{\mathcal{H}}
\DeclareMathOperator{\Bb}{\mathbb{B}}
\DeclareMathOperator{\Cb}{\mathbb{C}}
\DeclareMathOperator{\Db}{\mathbb{D}}
\DeclareMathOperator{\Kb}{\mathbb{K}}
\DeclareMathOperator{\Nb}{\mathbb{N}}
\DeclareMathOperator{\Xb}{\mathbb{X}}
\newcommand{\abs}[1]{\left|#1\right|}
\newcommand{\norm}[1]{\left\|#1\right\|}
\newcommand{\wt}[1]{\widetilde{#1}}
\newcommand{\wh}[1]{\widehat{#1}}
\newcommand{\ip}[1]{\left\langle #1\right\rangle}
\begin{document}

\title[Domains with negatively pinched K\"ahler metrics]{The geometry of domains with negatively pinched K\"ahler metrics}

\author[F. Bracci]{Filippo Bracci$^1$
}
\address{F. Bracci: Dipartimento di Matematica, Universit\`a di Roma ``Tor Vergata", Via della Ricerca Scientifica 1, 00133, Roma, Italia.} \email{fbracci@mat.uniroma2.it}

\author[H. Gaussier]{Herv\'e Gaussier$^2$
}
\address{H. Gaussier: Univ. Grenoble Alpes, CNRS, IF, F-38000 Grenoble, France}
\email{herve.gaussier@univ-grenoble-alpes.fr}

\author[A. Zimmer]{Andrew Zimmer$^3$
}
\address{A. Zimmer: Department of Mathematics, Louisiana State University, Baton Rouge, LA USA }
\curraddr{Department of Mathematics, University of Wisconsin-Madison, Madison, WI, USA}
\email{amzimmer2@wisc.edu}

\date{\today}
\subjclass{Primary: 32Q15, 53C20.  Secondary: 32T15, 32T25, 53C21.}
\keywords{K\"ahler manifolds, Holomorphic (bi)sectional curvature, strongly pseudoconvex domains, convex domains}

\thanks{$^1\,$Partially supported by PRIN {\it Real and Complex Manifolds: Topology, Geometry and holomorphic dynamics} n.2017JZ2SW5, by INdAM and by the MIUR Excellence Department Project awarded to the  
Department of Mathematics, University of Rome Tor Vergata, CUP E83C18000100006}
\thanks{$^2\,$Partially supported by ERC ALKAGE}
\thanks{$^3\,$Partially supported by the National Science Foundation under grants DMS-1760233, DMS-2105580, and DMS-2104381}

\begin{abstract} We study how the existence of a negatively pinched K{\"a}hler metric on a domain in complex Euclidean space restricts the geometry of its boundary. In particular, we show that if a convex domain admits a complete K{\"a}hler metric, with pinched negative holomorphic bisectional curvature outside a compact set, then the boundary of the domain does not contain any complex subvariety of positive dimension. Moreover, if the boundary of the domain is smooth, then it is of finite type in the sense of D'Angelo. We also use curvature to provide a characterization of strong pseudoconvexity amongst convex domains. In particular, we show that a convex domain with $C^{2,\alpha}$ boundary is strongly pseudoconvex if and only if it admits a complete K{\"a}hler metric with sufficiently tight pinched negative holomorphic sectional curvature outside a compact set.
 \end{abstract} 

\maketitle

\section{Introduction}

Let $(M,J)$ be a complex manifold with K{\"a}hler metric $g$ and let $R(g)$ denote the curvature tensor of $(M,g)$. Then the \emph{holomorphic bisectional curvature} of non-zero $X,Y \in T_pM$ is given by 
\begin{align*}
B(g)(X,Y) = \frac{R(g)(X,JX,Y, JY)}{g(X,X)g(Y,Y)}
\end{align*}
and the \emph{holomorphic sectional curvature} of a non-zero $X \in T_pM$ is given by 
\begin{align*}
H(g)(X) = B(g)(X,X).
\end{align*}

We say that $(M,g)$ has \emph{pinched negative holomorphic bisectional curvature} if there exist $a,b>0$ such that 
\begin{align*}
-a \leq B(g)(X,Y) \leq -b
\end{align*}
for all $p \in M$ and non-zero vectors $X,Y \in T_pM$. 
Likewise, we say that $(M,g)$ has \emph{pinched negative holomorphic sectional curvature} if there exist $a,b>0$ such that 
\begin{align*}
-a \leq H(g)(X) \leq -b
\end{align*}
for all $p \in M$ and non-zero vector $X \in T_pM$. It follows from~\cite[Equation 4]{GK1967}, that if the Riemannian sectional curvature is negatively pinched, then the holomorphic bisectional curvature is also negatively pinched. However, there exist examples of K{\"a}hler manifolds which have negatively pinched holomorphic bisectional curvature, but not negatively pinched Riemannian sectional curvature. 

The holomorphic sectional curvature determines the entire curvature tensor, but in general it is unclear how conditions on the holomorphic (bi)sectional curvature restrict the global complex geometry of a manifold. One important result along these lines is due to P. Yang, who in 1976 proved the following theorem.

\begin{theorem}[P. Yang \cite{Yan76}]\label{yang-thm}Let $\Db \subset \Cb$ denote the unit disk. If $d \geq 2$, then $\Db^d:=\Db \times \cdots \times \Db \subset \Cb^d$ does not admit a complete K{\"a}hler metric with pinched negative holomorphic bisectional curvature.
\end{theorem}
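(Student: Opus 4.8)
The plan is to exploit the product structure of $\Db^d$ together with the well-known classification of complete K\"ahler metrics with pinched negative curvature on the disk and on products. The key observation is that $\Db^d$ contains arbitrarily large totally geodesic (for the Euclidean/product-invariant structure) copies of the bidisk $\Db^2$, and more importantly contains complex geodesics — the factor disks $\Db \times \{z_2\} \times \cdots \times \{z_d\}$ — together with a nontrivial family of ``orthogonal'' disks through each point. So I would begin by assuming, for contradiction, that $g$ is a complete K\"ahler metric on $\Db^d$ with $-a \le B(g) \le -b < 0$.

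\medskip

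First I would invoke the Schwarz lemma of Royden--Yau type: since $B(g) \le -b$, the holomorphic sectional curvature is $\le -b$, and a Yau-type Schwarz lemma shows that every holomorphic map from a disk with its Poincar\'e metric into $(\Db^d, g)$ is distance-decreasing up to the constant $1/\sqrt{b}$; in particular the factor embeddings $\iota_j : \Db \hookrightarrow \Db^d$ pull $g$ back to a metric comparable to the Poincar\'e metric on $\Db$, hence complete and of curvature bounded between two negative constants along these slices. Next, I would use the \emph{upper} bound $B(g) \le -b$ applied to pairs $(X, Y)$ where $X$ is tangent to the $j$-th factor and $Y$ tangent to the $k$-th factor for $j \ne k$: the bisectional curvature between the two complex lines controls, via the second variation / comparison for the product foliation, how the $j$-slice metric varies as one moves in the $k$-direction. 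The heart of the matter is that a negative upper bound on $B(g)(X,Y)$ for $X,Y$ in \emph{different} factors forces the slice metrics to vary, which contradicts a lower bound coming from completeness plus the other factors being ``there.''

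\medskip

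More concretely, the cleanest route is via the Ahlfors--Schwarz lemma in both directions to compare $g$ with the Bergman/Poincar\'e product metric $g_0 = \sum_j g_{\Db}$ on $\Db^d$: the lower curvature bound $B(g) \ge -a$ together with completeness gives (by the Yau Schwarz lemma applied to the identity map $(\Db^d, g_0) \to (\Db^d, g)$, using that $g_0$ has Ricci curvature bounded below) an inequality $g \ge c\, g_0$ for some $c>0$; conversely the upper bound $H(g) \le -b$ gives $g \le C g_0$ via the Schwarz lemma applied to holomorphic disks, so $g$ and $g_0$ are bi-Lipschitz. Then I would compute, or rather estimate, the bisectional curvature of any metric uniformly equivalent to the product metric $g_0$ in the directions of two distinct factors: the point is that $g_0$ itself is \emph{flat} in mixed directions ($B(g_0)(\partial_j, \partial_k) = 0$ for $j \ne k$), and a bi-Lipschitz perturbation cannot make this quantity bounded away from $0$ uniformly over the whole noncompact manifold — one produces a sequence of points going to the boundary along the diagonal where the mixed bisectional curvature must approach a nonnegative limit, contradicting $B(g) \le -b$.

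\medskip

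The main obstacle, and where care is genuinely needed, is the last step: passing from ``$g$ is bi-Lipschitz to the flat-in-mixed-directions product metric'' to ``the mixed holomorphic bisectional curvature of $g$ cannot be uniformly $\le -b$.'' A crude bi-Lipschitz bound does not control curvature, so one cannot argue purely metrically; instead I expect Yang's actual argument (and the one I would reconstruct) uses a more delicate integral or monotonicity estimate — for instance, considering the function $u(z_1) = \log \|\partial/\partial z_2\|_g^2$ restricted to a factor disk $\Db \times \{z_2\}$, showing via the curvature equation that its Laplacian is controlled by the mixed bisectional curvature, and then deriving a contradiction with the sub/superharmonicity forced on such a function on a complete hyperbolic-type disk (a Liouville-type obstruction). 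Getting the signs and the completeness input exactly right in this PDE step is the crux; everything before it is standard Schwarz-lemma bookkeeping.
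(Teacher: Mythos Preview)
The paper does not give its own proof of Theorem~\ref{yang-thm}: it is quoted as a result of P.~Yang and used only as motivation. So there is no proof in the paper to compare your proposal against directly. That said, the paper's machinery does yield Yang's theorem as an immediate consequence of Proposition~\ref{prop:no_embedded_products}: the polydisk $\Db^d$ is convex and contains no complex affine line, hence lies in $\Xb_d$, and the affine $2$-plane $V=\Cb^2\times\{0\}^{d-2}$ satisfies $V\cap\Db^d\cong\Db\times\Db$; Proposition~\ref{prop:no_embedded_products} (resting on Zheng's Theorem~\ref{thm:zheng} on domains of geometric rank $\ge 2$) then rules out any complete K\"ahler metric with pinched negative bisectional curvature.

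Your proposal contains two distinct lines. The first --- deduce bi-Lipschitz equivalence of $g$ and the product Poincar\'e metric $g_0$ via two applications of the Yau--Schwarz lemma, then argue that a metric bi-Lipschitz to $g_0$ cannot have mixed bisectional curvature uniformly $\le -b$ --- has a genuine gap at the second step, which you yourself flag: bi-Lipschitz equivalence gives no control whatsoever on curvature, and there is no mechanism forcing a sequence along which the mixed bisectional curvature of $g$ tends to $0$. That line cannot be completed as stated.

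Your final paragraph, however, is essentially Yang's actual argument. Restricting $u=\log\|\partial/\partial z_2\|_g^2$ to a factor disk and computing its Laplacian in the induced slice metric brings in precisely the mixed curvature term $R_{1\bar 1 2\bar 2}$; the pinching $-a\le B(g)\le -b$ together with the bi-Lipschitz comparison you do obtain turns this into a differential inequality for the bounded function $u$ on a complete slice, and a Yau maximum-principle/Liouville argument produces the contradiction. So your instinct at the end is right and matches Yang's original hands-on PDE method, whereas the paper's route through Zheng's theorem is a different, more packaged argument that applies to any domain containing an affine bidisk slice. What is missing from your write-up is exactly the step you call ``the crux'': until the Laplacian formula and the maximum-principle step are carried out with the signs and completeness input pinned down, the proposal remains a sketch rather than a proof.
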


We note that the symmetric metric on the bidisk has pinched negative holomorphic sectional curvature. Theorem~\ref{yang-thm} has been generalized by a number of authors, see for instance~\cite{Mok87, Seo12, SZ08, Zhe93}. In all these works, the pinching condition on the holomorphic bisectional curvature is ``global'', in the sense that it is required to hold at each point. However, it seems more natural to ask for those pinching conditions to hold ``asymptotically'', meaning outside a compact set. 

Our first main result provides a vast generalization of P. Yang's theorem and also connects the existence of a complete K\"ahler metric with pinched negative holomorphic bisectional curvature with classical finite type conditions in several complex variables. 

Let $\Gamma$ be a smooth real hypersurface in $\Cb^d$ and let $r$ be a local defining function for $\Gamma$. For $p \in \Cb^d$, let $C^*(0,p)$ denote the set of germs of non constant holomorphic maps $z$ from $\Cb$ to $\Cb^d$, such that $z(0) = p$. If $g$ is a smooth function defined in a neighborhood of $0 \in \Cb$, we denote by $\nu(g)$ the order of vanishing of the function $g - g(0)$ at the origin. Following~\cite{Dan79},
the {\it type} $\tau(\Gamma,p)$ of $M$ at $p \in \Gamma$ is defined by
$$
\tau(\Gamma,p):=\sup_{z \in C^*(0,p)}\frac{\nu(r \circ z)}{\nu(z)}.
$$
Then the hypersurface $\Gamma$ is of {\it finite type} (in the sense of D'Angelo) if $\tau(\Gamma,p) < \infty$ for every $p \in \Gamma$. 

Our first main result is the following. 

\begin{theorem}\label{type-thm} Suppose that $\Omega \subset \Cb^d$ is a convex domain and $\Omega$ has a complete K{\"a}hler metric with pinched negative holomorphic bisectional curvature outside a (possibly empty) compact subset of $\Omega$. Then:
\begin{itemize} 
\item[(1)] $\Omega$ does not contain any complex affine line, 
\item[(2)] $\partial \Omega$ does not contain any complex subvariety of positive dimension, and
\item[(3)] if $\partial \Omega$ is a $C^\infty$ smooth hypersurface, then $\partial \Omega$ is of finite type in the sense of D'Angelo.
\end{itemize}
\end{theorem}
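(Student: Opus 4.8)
The plan is to prove the three statements in the reverse order of generality: $(1)$ first, then deduce $(2)$ from $(1)$, and finally obtain $(3)$ from $(2)$ together with a known structure theorem characterizing finite type among convex domains.

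For part $(1)$, suppose toward a contradiction that $\Omega$ contains a complex affine line $\ell$. Since $\Omega$ is convex, the affine span and the geometry near $\ell$ let us produce, via an affine change of coordinates, a product structure: one should show that $\Omega$ contains a set biholomorphic (or affinely equivalent) to $\Cb \times U$ for some open $U \subset \Cb^{d-1}$, or at least a suitably large tube around $\ell$. The key point is then that the complete K\"ahler metric $g$ on $\Omega$ restricts to, and when combined with the ambient negative pinching controls the curvature of, a complete submanifold containing a complex line. Here I would invoke the Ahlfors-Schwarz type argument behind Yang's theorem: a complex line sitting inside $\Omega$ with the induced metric, or the monotonicity of holomorphic sectional curvature under restriction to complex submanifolds, forces a contradiction with the upper bound $B(g) \le -b < 0$ outside a compact set — because a complete complex line cannot carry a metric whose holomorphic sectional curvature is bounded above by a negative constant outside a compact set while also being induced from an ambient metric that is complete (one can push the line off to infinity by translating along itself, exploiting that translations in the $\ell$-direction are ``almost'' isometries of the convex geometry, or more robustly run the rescaling/normal-families argument that is standard in this circle of ideas). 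The main obstacle in this step is making precise how the asymptotic (rather than global) pinching hypothesis interacts with the non-compactness: one needs to arrange that the relevant portion of the complex line, after translation, lies outside the exceptional compact set, and this is where convexity is essential.

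For part $(2)$, suppose $\partial\Omega$ contains a complex subvariety $V$ of positive dimension. After slicing, we may assume $V$ contains the image of a nonconstant holomorphic disk $\varphi \colon \Db \to \partial\Omega$. Using convexity, a supporting real hyperplane $H$ at a point of $\varphi(\Db)$ must contain all of $\varphi(\Db)$ (since a bounded holomorphic function achieving its maximum modulus — here the affine functional defining $H$, composed with $\varphi$ — is constant), hence $V$ lies in a complex affine hyperplane $\widehat H \subset \partial\Omega$. Then $\Omega$ lies on one side of $\widehat H$, and a standard argument shows $\Omega$ contains an entire complex affine line in a direction parallel to $\widehat H$: indeed the intersection of $\Omega$ with complex lines transverse to $\widehat H$ can be analyzed, or more directly one translates $\widehat H$ inward and uses that $\Omega$ is convex and open to find a whole line. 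This reduces $(2)$ to $(1)$. The obstacle here is the bookkeeping: producing the affine line from a boundary complex hyperplane requires care, but it is a classical convexity fact and I would cite or reprove it briefly.

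For part $(3)$, assume $\partial\Omega$ is $C^\infty$ smooth but not of finite type in the sense of D'Angelo. By the known characterization of finite type for smoothly bounded convex domains (the D'Angelo type equals the line type, and infinite line type means some complex line has infinite order of contact with $\partial\Omega$ at a boundary point, which for convex domains forces the boundary to contain a nontrivial complex analytic disk through that point — this is due to work of McNeal, Boas--Straube, and Yu on convex finite type), infinite type implies $\partial\Omega$ contains a germ of complex variety of positive dimension, contradicting $(2)$. The main obstacle overall is part $(1)$: turning the heuristic ``a complete complex line cannot sit inside a manifold with asymptotically negatively pinched bisectional curvature'' into a rigorous argument that genuinely uses only the outside-a-compact-set hypothesis, presumably via a rescaling argument producing in the limit an entire curve in a complete K\"ahler manifold of pinched negative holomorphic bisectional curvature, contradicting (a generalization of) Yang's theorem or the Ahlfors--Schwarz lemma.
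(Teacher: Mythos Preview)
Your reduction of $(2)$ to $(1)$ is incorrect. The claim that a complex disk in $\partial\Omega$ forces $\Omega$ to contain a complex affine line is simply false: the bidisk $\Db\times\Db$ is a bounded convex domain whose boundary contains $\{1\}\times\Db$, yet it contains no complex affine line. More generally, any product of bounded convex planar domains gives a counterexample. The step where you pass from ``$\Omega$ lies on one side of a complex hyperplane $\widehat H$'' to ``$\Omega$ contains a complex line parallel to $\widehat H$'' has no justification, and indeed none exists.

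Your reduction of $(3)$ to $(2)$ has the same kind of gap. For smoothly bounded convex domains, infinite D'Angelo type is equivalent to infinite \emph{order of contact} with some complex line, but infinite order of contact does \emph{not} imply that the boundary actually contains a complex analytic disk. The results of McNeal, Boas--Straube and Yu that you cite establish the equivalence of the various finite-type notions for convex domains; they do not say that infinite type produces a variety in the boundary in the $C^\infty$ category (that would essentially require real-analyticity).

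The paper's proof of $(2)$ and $(3)$ proceeds along entirely different lines. One first shows (Theorem~\ref{thm:construcing_embedded_products}) that either hypothesis --- a holomorphic disk in $\partial\Omega$, or a $C^\infty$ boundary with an infinite-type point --- yields an \emph{affine limit} $\Omega_\infty\in{\rm AL}(\Omega)$ containing a $2$-plane slice biholomorphic to $\Db\times\Db$. One then shows (Theorem~\ref{thm:affine}, via Ricci-flow smoothing and a normal-families argument for K\"ahler metrics) that the asymptotic pinching on $\Omega$ passes to a global pinching on $\Omega_\infty$. Finally, Zheng's theorem on domains of geometric rank $\geq 2$ rules out such a metric on $\Omega_\infty$. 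None of this is a reduction to $(1)$.

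Your argument for $(1)$ is in the right spirit but vaguer than necessary. The paper's proof is direct: if $\Omega$ contains a complex line then, after an affine change, $\Omega=\Cb\times\Omega'$; the Yau--Schwarz lemma gives $\sqrt{g}\leq A\, k_{\Omega\setminus K}$ on $\Omega\setminus K$; choosing $z_0$ with $z_0+\Cb\cdot e_1\subset\Omega\setminus K$ yields $0<\sqrt{g(z_0)(e_1,e_1)}\leq A\,k_{\Omega\setminus K}(z_0;e_1)=0$, a contradiction. No rescaling or normal-families argument is needed here.
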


\begin{rem} \ \begin{enumerate}
\item Notice that we do not assume that $\Omega$ is bounded. Further, parts (1) and (2) do not require that $\partial \Omega$ has any regularity. 
\item The condition that $\Omega$ does not contain any complex affine line implies that the Kobayashi distance on $\Omega$ is non-degenerate and that $\Omega$ is biholomorphic to a bounded domain. 
\end{enumerate}
\end{rem}

Under the hypothesis of Theorem~\ref{type-thm}, the Yau-Schwarz Lemma implies that the K{\"a}hler metric is bi-Lipschitz to the Kobayashi metric, see Lemma~\ref{kob-lem1} below. Further, for bounded convex domains of finite type  (in the sense of D'Angelo), the Kobayashi metric induces a Gromov hyperbolic metric space according to~\cite{Zim16}. So we have the following Corollary of Theorem~\ref{type-thm}.

\begin{corollary} Suppose that $\Omega \subset \Cb^d$ is a bounded convex domain with $C^\infty$ boundary. If $g$ is a complete K{\"a}hler metric on $\Omega$ with pinched negative holomorphic bisectional curvature  outside a compact subset of $\Omega$ and $d$ is the distance induced by $g$, then the metric space $(\Omega, d)$ is Gromov hyperbolic. 
\end{corollary}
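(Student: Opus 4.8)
The plan is to deduce the Corollary directly from Theorem~\ref{type-thm}, Lemma~\ref{kob-lem1}, and the main result of~\cite{Zim16}, so the proof is short. First, since $\Omega$ is a bounded convex domain with $C^\infty$ boundary which carries a complete K\"ahler metric $g$ with pinched negative holomorphic bisectional curvature outside a compact subset of $\Omega$, part (3) of Theorem~\ref{type-thm} applies and shows that $\partial\Omega$ is of finite type in the sense of D'Angelo.

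Next I would pass from $g$ to the Kobayashi geometry of $\Omega$. By Lemma~\ref{kob-lem1} the metric $g$ is bi-Lipschitz to the Kobayashi(--Royden) infinitesimal metric of $\Omega$; integrating along curves, this yields bi-Lipschitz equivalence of the induced length distances, i.e.\ there is $C \geq 1$ with $C^{-1} k_\Omega \leq d \leq C\, k_\Omega$ on $\Omega \times \Omega$, where $k_\Omega$ denotes the Kobayashi distance. Since $\Omega$ is convex, Lempert's theorem gives that $k_\Omega$ is the integrated form of the Kobayashi--Royden metric and that $(\Omega, k_\Omega)$ is a geodesic metric space; since $g$ is complete, $(\Omega, d)$ is a geodesic metric space by Hopf--Rinow. (Alternatively one simply observes that both are length spaces, which is all that is needed below.)

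Finally, because $\Omega$ is a bounded convex domain of finite type (in the sense of D'Angelo), the main theorem of~\cite{Zim16} shows that $(\Omega, k_\Omega)$ is Gromov hyperbolic. The identity map $(\Omega, d) \to (\Omega, k_\Omega)$ is bi-Lipschitz, hence a quasi-isometry between geodesic metric spaces, and Gromov hyperbolicity is invariant under quasi-isometries of geodesic (more generally, length) metric spaces. Therefore $(\Omega, d)$ is Gromov hyperbolic.

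The substance of the statement is entirely supplied by the three cited results, so I do not expect a genuine obstacle; the only point requiring a little care is the final transfer, namely recording that bi-Lipschitz equivalence of the Finsler structures gives bi-Lipschitz equivalence of the induced distances (immediate from the definition of length), and that both spaces are geodesic, so that the standard quasi-isometry invariance of Gromov hyperbolicity may legitimately be invoked.
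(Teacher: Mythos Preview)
Your argument is correct and matches the paper's own reasoning essentially verbatim: apply Theorem~\ref{type-thm}(3) to get finite type, invoke Lemma~\ref{kob-lem1} for the bi-Lipschitz comparison with the Kobayashi metric, cite~\cite{Zim16} for Gromov hyperbolicity of $(\Omega,K_\Omega)$, and transfer via quasi-isometry invariance. One cosmetic point: in the paper's notation $k_\Omega$ is the infinitesimal Kobayashi metric and $K_\Omega$ the Kobayashi distance, so your distance inequality should read $C^{-1}K_\Omega \le d \le C\,K_\Omega$.
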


Finite type conditions are essential in the study of partial differential equations in complex analysis. For instance, it is a classical result due to D. Catlin~\cite{Cat87} that the boundary of a bounded, smooth, pseudoconvex domain is of finite type if and only if the $\bar{\partial}$-Neumann problem satisfies a subelliptic estimate at each boundary point; this implies in particular the regularity up to the boundary of the canonical solution of the inhomogeneous Cauchy-Riemann equation. Therefore, Theorem~\ref{type-thm} part (3) shows that, for at least smoothly bounded convex domains, the existence of a K{\"a}hler metric with pinched negative holomorphic bisectional curvature has strong analytic implications.

Based on Theorem~\ref{type-thm}, it seems natural to conjecture the following. 

\begin{conject} Suppose that $\Omega$ is a bounded pseudoconvex domain with $C^\infty$ boundary in $\mathbb C^d$, $d \geq 1$. Then there exists a complete K{\"a}hler metric on $\Omega$ with pinched negative holomorphic bisectional curvature in a neighborhood of $\partial \Omega$ if and only if $\partial \Omega$ has finite type. 
\end{conject}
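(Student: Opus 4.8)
Since this statement is a conjecture, what follows is a plan of attack together with the point where I expect the argument to stall. (For $d=1$ both directions are trivial --- every bounded planar domain with smooth boundary is of finite type and carries the complete, constant-curvature Poincar\'e metric --- so the content is $d\geq 2$.)

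\emph{The forward implication} (existence of the metric $\Rightarrow$ finite type) should be seen as the extension of Theorem~\ref{type-thm}(3) from convex to pseudoconvex domains, so I would first isolate the soft part, which ought to transfer. The upper bound on the holomorphic sectional curvature of $g$ together with the Yau--Schwarz lemma forces $g$ to be dominated near $\partial\Omega$ by a constant multiple of the Kobayashi metric, so $\Omega$ is Kobayashi hyperbolic; one then needs a version of Lemma~\ref{kob-lem1} without the convexity hypothesis, namely that $g$ is in fact bi-Lipschitz to the Kobayashi metric near $\partial\Omega$ (the reverse comparison must be reproven, but should follow from completeness of $g$ and the two-sided curvature bounds). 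Granting this, $(\Omega,k_\Omega)$ is near $\partial\Omega$ quasi-isometric to a space of pinched negative curvature, hence is Gromov hyperbolic and has the visibility property there. One then argues by contradiction: suppose some $p\in\partial\Omega$ has infinite D'Angelo type. In the convex case infinite type pushes a complex line segment into $\partial\Omega$, which survives every scaling limit and destroys either hyperbolicity or the pinching; at a merely pseudoconvex point no complex variety need appear, $\partial\Omega$ only ``flattens to infinite order'', and there is no canonical rescaling limit (of Pinchuk or Catlin type) with good geometric control. The plan is to run a scaling argument locally at $p$ regardless, and to extract from the infinite-order flatness two $k_\Omega$-geodesic rays staying a bounded distance apart, contradicting visibility --- using the genuine two-sided bound on the full curvature tensor of $g$, not merely soft hyperbolicity, to transport uniform negativity of the holomorphic sectional curvature in \emph{every} direction onto the Kobayashi side. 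The chief difficulty here is making that dichotomy rigorous: it amounts, roughly, to proving that a bounded pseudoconvex domain with smooth boundary whose Kobayashi metric is Gromov hyperbolic must be of finite type, a statement of independent interest that appears to be open and whose convex case is exactly what the rescaling machinery behind Theorem~\ref{type-thm} provides.

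\emph{The reverse implication} (finite type $\Rightarrow$ existence of the metric) is a construction problem, which I would localize at each boundary point and then patch. At a strongly pseudoconvex point the Cheng--Yau complete K\"ahler--Einstein metric --- or already the Bergman metric, by Klembeck's curvature asymptotics and Fefferman's expansion --- is asymptotically complex hyperbolic, so its full curvature tensor, in particular its holomorphic bisectional curvature, converges near $\partial\Omega$ to that of $\mathbb{CH}^d$ and is therefore pinched; this case is known, and for convex $C^{2,\alpha}$ domains it is the characterization quoted in the abstract. The genuinely open case is a weakly pseudoconvex point $p$ of finite type. There the K\"ahler--Einstein and Bergman metrics are not known to have pinched holomorphic bisectional curvature, and one expects the upper bound to fail along the ``flat'' directions, where those metrics degenerate only polynomially, so a new local metric must be built. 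The natural raw material is Catlin's bounded plurisubharmonic weight near a finite-type point --- the weight underlying the subelliptic estimate --- whose complex Hessian blows up like a negative power of the distance to $\partial\Omega$; one would try a K\"ahler potential of the form $-\log(-\rho)+\varepsilon\varphi$, or a multi-scale modification adapted to Catlin's polynomial models of $\partial\Omega$ near $p$, and then estimate the entire curvature tensor of the associated metric. The local pieces would be glued to a global complete K\"ahler metric by a regularized maximum, or by adding a sufficiently large multiple of a global strictly plurisubharmonic exhaustion to absorb the mixed terms; completeness then follows because the metric is smooth on $\Omega$ and dominates the complete Kobayashi metric near $\partial\Omega$.

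The decisive obstacle --- and the reason the statement remains a conjecture --- is this curvature estimate for the local construction at weakly pseudoconvex finite-type points: one must produce a K\"ahler metric whose holomorphic bisectional curvature is bounded \emph{above} by a negative constant there, i.e. which is uniformly negatively curved in spite of the strongly anisotropic geometry dictated by the finite-type data. No construction achieving this is currently available outside the strongly pseudoconvex (and convex finite-type) cases, and it is plausible that genuinely new information about the curvature of metrics attached to Catlin's polynomial models will be required.
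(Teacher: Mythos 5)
This statement is labeled a conjecture in the paper, and the paper offers no proof of it; there is therefore nothing to compare your argument against line by line. Your submission correctly treats it as such: it is a programme, not a proof, and it stalls exactly where the paper's own evidence runs out. Your map of the territory matches the paper's: the forward direction is known only in the convex case (Theorem~\ref{type-thm}(3)), and the reverse direction is known only at strongly pseudoconvex boundaries (Cheng--Yau, Klembeck, Kim--Yu) plus the special families of Bland and Fu cited in the introduction. Two remarks sharpening why the paper's methods do not close the remaining gaps, consistent with what you say. For the forward direction, the entire rescaling apparatus rests on Frankel's theorem that $\Aff(\Cb^d)\backslash \Xb_{d,0}$ is compact, and on the fact that for convex domains an infinite-type boundary point forces a non-trivial affine disk into the boundary of some affine limit (Proposition~\ref{prop:rescale_infinite_type}), after which Zheng's theorem applies via the embedded bidisk. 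Neither ingredient has a pseudoconvex analogue: there is no affine-invariant compactness for general pseudoconvex domains, and a weakly pseudoconvex point of infinite type need not produce any variety in the boundary, so the obstruction mechanism (geometric rank $\geq 2$) is simply unavailable. Your alternative of routing the contradiction through Gromov hyperbolicity of $(\Omega,K_\Omega)$ is reasonable but, as you note, the implication ``Gromov hyperbolic $\Rightarrow$ finite type'' for smooth bounded pseudoconvex domains is itself open. For the reverse direction, your identification of the decisive obstacle --- a local metric with holomorphic bisectional curvature bounded above by a negative constant near a weakly pseudoconvex finite-type point --- is exactly right; note also that the conjecture demands pinched \emph{bisectional} curvature, which is strictly stronger than the holomorphic sectional pinching that Fu obtains for Reinhardt domains in $\Cb^2$, so even the known two-dimensional evidence does not fully settle a single nonconvex weakly pseudoconvex example. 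In short: your proposal contains no false steps, and its honest assessment that the statement remains open is the correct conclusion.
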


For the existence part of the conjecture, there are a number of results concerning the K\"ahler-Einstein and the Bergman metrics. For bounded strongly pseudoconvex domains, works of S.Y. Cheng and S.T. Yau~\cite{CY80}, P. Klembeck~\cite{Kle78}, and K.T. Kim and J. Yu~\cite{KY96} give precise curvature estimates near the boundary, see Theorem~\ref{thm:curv} below for details. J. Bland~\cite{Bl86} proved that the Riemannian curvature of the K\"ahler-Einstein metric with prescribed negative Ricci curvature on the Th\"ullen domain $\{|z|^2 +|w|^{2p} <1\}$ is negatively pinched for $p \geq 1$. For Reinhardt domains of finite type domains in $\mathbb{C}^2$, S. Fu~\cite{Fu96} proved that the Bergman metric has negatively pinched holomorphic sectional curvatures near the boundary.

We can also use curvature to provide a characterization of strong pseudoconvexity for convex domains. Recall that a domain $\Omega \subset \Cb^d$ with $C^2$ boundary is called \emph{strongly pseudoconvex} if the Levi form of $\partial \Omega$ is positive definite. We then prove the following. 

\begin{theorem}\label{thm:char_str_pconvex}
For any $\alpha \in (0,1)$, $d \geq 2$, and $c > 0$, there exists some $\epsilon = \epsilon(\alpha, d, c) >0$ such that: if $\Omega \subset \Cb^d$ is a bounded convex domain with $C^{2,\alpha}$ boundary, then $\Omega$ is strongly pseudoconvex if and only if there exists a complete K{\"a}hler metric $g$ on $\Omega$ with
\begin{align*}
-c-\epsilon \leq H(g) \leq -c + \epsilon
\end{align*}
outside a (possibly empty) compact subset of $\Omega$.
\end{theorem}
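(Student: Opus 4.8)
The plan is to prove both directions of the equivalence, with the two implications requiring quite different inputs. For the ``only if'' direction, suppose $\Omega$ is a bounded convex domain with $C^{2,\alpha}$ boundary that is strongly pseudoconvex. By the works of Cheng--Yau, Klembeck, and Kim--Yu (Theorem~\ref{thm:curv} in the excerpt), the Cheng--Yau K\"ahler--Einstein metric $g_{KE}$ on $\Omega$ (normalized to have Ricci curvature $-(d+1)$, say) has the property that its holomorphic sectional curvature converges to the constant negative value of the complex hyperbolic metric as one approaches $\partial\Omega$; more precisely $H(g_{KE}) \to -\frac{2}{d+1}$ (up to normalization conventions) uniformly near $\partial\Omega$. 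After rescaling the metric by a constant so that the limiting value is exactly $-c$, one gets $-c-\epsilon \le H(g) \le -c+\epsilon$ outside a compact subset of $\Omega$ for \emph{any} prescribed $\epsilon>0$, in particular for the $\epsilon(\alpha,d,c)$ that will be produced by the other direction. This direction does not really use convexity beyond strong pseudoconvexity and is essentially a citation.

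For the ``if'' direction — the substantive one — suppose $\Omega$ is a bounded convex domain with $C^{2,\alpha}$ boundary carrying a complete K\"ahler metric $g$ with $-c-\epsilon \le H(g) \le -c+\epsilon$ outside a compact set, where $\epsilon$ is small to be chosen. First, since $H(g)$ is pinched negative outside a compact set, Theorem~\ref{type-thm} applies: $\partial\Omega$ contains no positive-dimensional complex subvariety and, being $C^{2,\alpha}$ hence in particular $C^2$, it is of finite type (one needs to check Theorem~\ref{type-thm}(3) is available for $C^{2,\alpha}$ boundaries, or argue that a convex domain with $C^2$ boundary containing no nontrivial analytic disc in the boundary is already enough to run the scaling arguments below). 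The strategy now is a rescaling/normal-families argument at the boundary: suppose for contradiction that $\Omega$ is \emph{not} strongly pseudoconvex, so the Levi form degenerates at some $\xi_0 \in \partial\Omega$. Using the convexity and the $C^{2,\alpha}$ regularity, perform a Pinchuk-type dilation (affine rescalings $\Lambda_\nu$ adapted to the geometry of $\partial\Omega$ near $\xi_0$ along a sequence of interior points $p_\nu \to \xi_0$) so that the domains $\Lambda_\nu(\Omega)$ converge in the local Hausdorff sense to a limit domain $\Omega_\infty$ which is convex, unbounded, and — because the Levi form degenerates at $\xi_0$ and $\partial\Omega$ is only $C^{2,\alpha}$ (so one cannot extract higher-order finite-type data to rescale) — of the form $\Omega_\infty \cong \Db \times \Cb^{d-1}$ or more generally contains an affine line or a splitting factor.

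The key point is then to push the metric through the rescaling. Since $g$ has pinched negative holomorphic sectional curvature near $\partial\Omega$, the Yau--Schwarz lemma (Lemma~\ref{kob-lem1}) shows $g$ is bi-Lipschitz to the Kobayashi metric $k_\Omega$ near $\partial\Omega$; the affine maps $\Lambda_\nu$ are biholomorphisms so they transport $k_\Omega$ to $k_{\Lambda_\nu(\Omega)}$, and by the stability of the Kobayashi metric under Hausdorff convergence of convex domains, $k_{\Lambda_\nu(\Omega)} \to k_{\Omega_\infty}$ locally uniformly. One then wants to conclude that $\Omega_\infty$ carries a complete metric bi-Lipschitz to $k_{\Omega_\infty}$ with holomorphic sectional curvature pinched in $[-c-\epsilon, -c+\epsilon]$; more carefully, since curvature is a second-order quantity one rescales the \emph{metric tensors} $\Lambda_\nu^* g$ (renormalized by the appropriate factor so they do not collapse — here the bi-Lipschitz control against the Kobayashi metric, which has a nontrivial scaling limit, prevents collapse), extracts a $C^\infty_{loc}$-convergent subsequence of K\"ahler metrics on $\Omega_\infty$ using interior elliptic/parabolic estimates (the metrics have uniformly bounded curvature and bounded geometry near the relevant region, so Cheeger--Gromov-type compactness applies), and obtains a complete K\"ahler metric $g_\infty$ on $\Omega_\infty$ with $-c-\epsilon \le H(g_\infty) \le -c+\epsilon$ everywhere. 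If $\Omega_\infty \cong \Db\times\Cb^{d-1}$ (or a product with a Euclidean factor), this contradicts a sharpened form of Yang's theorem; the essential mechanism is that along the flat directions the holomorphic sectional curvature of any complete K\"ahler metric cannot stay bounded away from $0$ — indeed one can use the argument in Yang's paper, which produces points where $H$ is close to $0$, and choosing $\epsilon < c$ makes $[-c-\epsilon,-c+\epsilon]$ avoid a neighborhood of $0$, the desired contradiction. This forces the Levi form to be non-degenerate, i.e.\ $\Omega$ is strongly pseudoconvex.

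I expect the main obstacle to be the second step of the ``if'' direction: controlling the curvature under the rescaling limit. Hausdorff convergence of domains and convergence of Kobayashi metrics is soft, but curvature is a second-order differential invariant, so one genuinely needs a convergence of the metric tensors in $C^2_{loc}$ (equivalently a Cheeger--Gromov compactness argument with uniform curvature and injectivity-radius bounds) to ensure the curvature pinching passes to the limit; establishing the requisite non-collapsing (lower injectivity radius bound) and uniform higher-order estimates — using completeness, the two-sided curvature bound, and the bi-Lipschitz comparison with $k_\Omega$ — is the technical heart of the argument. A secondary subtlety is extracting exactly the ``right'' limit domain from the $C^{2,\alpha}$ boundary at a Levi-degenerate point (ensuring it splits off a Euclidean/disk factor rather than remaining strictly pseudoconvex), which is where the precise choice of dilations and the finiteness of the available regularity are used; this is also the source of the quantitative dependence $\epsilon = \epsilon(\alpha, d, c)$, coming from uniform estimates on the rescaled boundaries in terms of the $C^{2,\alpha}$ norm.
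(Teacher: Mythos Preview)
Your proposal has a genuine gap in the ``if'' direction: the contradiction mechanism you propose does not work for holomorphic \emph{sectional} curvature. Yang's theorem (and its variants such as Theorem~\ref{type-thm} and Proposition~\ref{prop:no_embedded_products}) concern pinched negative holomorphic \emph{bisectional} curvature; as the paper remarks right after Theorem~\ref{yang-thm}, the symmetric metric on $\Db^2$ already has pinched negative holomorphic sectional curvature, so a product-type limit would give no contradiction. Moreover, your claimed form $\Omega_\infty \cong \Db \times \Cb^{d-1}$ cannot occur (any affine limit lies in $\Xb_d$ and so contains no complex line), and with only $C^{2,\alpha}$ regularity the rescaling limit at a Levi-degenerate point need not split as any kind of product. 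Finally, your appeal to Theorem~\ref{type-thm} at the outset is illegitimate for the same reason: that theorem assumes bisectional pinching, not sectional.

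The paper's argument uses a different contradiction, based on the Greene--Krantz rigidity theorem (Theorem~\ref{thm:GK}): a simply connected complex manifold which, for \emph{every} $\epsilon>0$, carries a complete K\"ahler metric with $-1-\epsilon \le H \le -1+\epsilon$ is biholomorphic to the ball. To exploit this one argues by contradiction with a \emph{sequence} of non-strongly-pseudoconvex convex domains $\Omega_n$ carrying metrics with $H$ pinched in $[-1-1/n,-1+1/n]$. Results from~\cite{Zim18} (packaged as Theorem~\ref{thm:compact_set}) produce affine limits $\Cc_n \in {\rm AL}(\Omega_n)$ lying in a fixed compact set $\mathbb{L}_{d,\alpha}\subset\Xb_d$ whose members are never biholomorphic to the ball; extract a limit $\Cc_\infty\in\mathbb{L}_{d,\alpha}$. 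The refined Theorem~\ref{thm:affine_closures} (whose ``Moreover'' clause gives constants depending only on $a,b,\epsilon,d$, not on $\Omega_n$) transfers metrics to $\Cc_n$ with uniform bi-Lipschitz and curvature-derivative bounds, and Proposition~\ref{prop:rescaling_metrics} then pushes them to $\Cc_\infty$. This yields, for every $\delta>0$, a complete K\"ahler metric on $\Cc_\infty$ with $-1-\delta\le H\le -1+\delta$; Greene--Krantz forces $\Cc_\infty$ to be the ball, the desired contradiction. Note in particular that the uniformity $\epsilon=\epsilon(\alpha,d,c)$ comes from this diagonal argument over the sequence $\Omega_n$, not from a single domain, and that the compactness result from~\cite{Zim18} replaces the explicit identification of the scaling limit that you attempt.
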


Theorem~\ref{thm:char_str_pconvex} generalizes Theorem~1.11 in~\cite{Zim18} which assumed, in addition, that $g$ and the derivatives of $g$ up to order two are uniformly bounded in terms of the Kobayashi metric. Further, as we will describe below, the conclusion of Theorem~\ref{thm:char_str_pconvex} does not hold for convex domains with $C^2$ boundary. 

Another motivation for Theorem~\ref{thm:char_str_pconvex} comes from the following classical results. Given a bounded strongly pseudoconvex domain $\Omega \subset \Cb^d$,  let $g_{KE,\Omega}$ denote the unique K{\"a}hler-Einstein metric in $\Omega$ with Ricci curvature equal to $-(d+1)$, constructed in~\cite{CY80, MY83}. Also, let $g_{B,\Omega}$ denote the Bergman metric in $\Omega$. Then the holomorphic sectional curvature of these metrics has the following behavior near the boundary. 

\begin{theorem}\cite{CY80,KY96,Kle78}\label{thm:curv} Suppose that $\Omega \subset \Cb^d$ is a bounded strongly pseudoconvex domain with $C^2$ smooth boundary. Then by~\cite{KY96,Kle78}
$$\lim_{z \rightarrow \partial\Omega} H(g_{B,\Omega}) = -\frac{4}{d+1}. $$
Further by~\cite[Corollary 6.6]{CY80}, if $\partial \Omega$ is $C^k$ smooth, with $k \geq \max\{3d + 6,2d + 9\}$, then
$$
\lim_{z \rightarrow \partial\Omega} H(g_{KE, \Omega})=-2.
$$
\end{theorem}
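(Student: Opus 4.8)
The plan is to derive both limits from a single principle: near a strongly pseudoconvex boundary point, a suitable rescaling of $\Omega$ converges to the unit ball $\Bb^d \subset \Cb^d$, and the canonical complete K\"ahler metrics on $\Bb^d$ — the Bergman metric and the K\"ahler-Einstein metric with Ricci curvature $-(d+1)$ — are, up to scale, the complex hyperbolic metric, which has constant holomorphic sectional curvature equal to $-\frac{4}{d+1}$ in the first case and to $-2$ in the second. First I would make this precise: if, under the rescaling, the relevant metric converges together with its derivatives up to order two, uniformly on compact subsets, to the corresponding metric on $\Bb^d$, then the full curvature tensor converges as well, and hence so does $H(g)$, uniformly over unit tangent vectors; letting the base point tend to $\partial\Omega$ then yields the stated boundary limits.

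For the Bergman metric I would argue as follows. Since $g_{B,\Omega}$ is computed from $\log K_\Omega$ (with $K_\Omega$ the Bergman kernel on the diagonal) and is a biholomorphic invariant, I would fix $p \in \partial\Omega$, choose a local holomorphic change of coordinates straightening $\partial\Omega$ at $p$ to second order, and compose it with anisotropic dilations $\Lambda_n$ so that $\Lambda_n(\Omega) \to \Bb^d$ in the local Hausdorff sense with $\Lambda_n(p) \to 0$. The key input is the localization principle for the Bergman kernel at a strongly pseudoconvex point (via H\"ormander's $L^2$-estimates, or Fefferman's asymptotic expansion) together with a Ramadanov-type stability statement, which gives $K_{\Lambda_n(\Omega)} \to K_{\Bb^d}$ in $C^\infty_{\mathrm{loc}}$; then the Bergman metrics and their curvature tensors converge, and since each $\Lambda_n$ is holomorphic it preserves $H(g_B)$, so $H(g_{B,\Omega})(q_n) \to -\frac{4}{d+1}$ for any $q_n \to p$. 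A compactness argument over $p \in \partial\Omega$ upgrades this to the uniform limit. The point at which only $C^2$ regularity of $\partial\Omega$ is needed — the improvement of \cite{KY96} over the $C^\infty$ statement of \cite{Kle78} — is exactly the proof that the kernel and all the derivatives entering the curvature localize and stabilize under the rescaling with so little boundary regularity.

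For the K\"ahler-Einstein metric I would invoke \cite{CY80} for the existence and completeness of $g_{KE,\Omega}$ with $\mathrm{Ric}=-(d+1)$, and use Fefferman's approximate solution of the complex Monge-Amp\`ere equation to produce a defining function $\rho$ for which $g_{KE,\Omega}$ is asymptotic near $\partial\Omega$ to $-\partial\bar\partial\log(-\rho)$, with the error controlled in $C^2$ up to the boundary once $\partial\Omega$ is of class $C^l$ with $l$ large. Rescaling at $p$ as above, $-\partial\bar\partial\log(-\rho)$ converges in $C^2$ on compacta to the complex hyperbolic metric on $\Bb^d$ normalized to $\mathrm{Ric}=-(d+1)$, which has constant holomorphic sectional curvature $-2$; as before this forces $H(g_{KE,\Omega}) \to -2$ at the boundary. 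I expect the lower bound $l \geq \max(3n+6,2n+9)$ to be precisely what the a priori estimates of \cite{CY80} require in order to differentiate the Monge-Amp\`ere solution the two extra times needed to reach the curvature tensor while keeping the error terms controlled.

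The hard part, in both cases, is the passage from the soft statement ``after rescaling, $\Omega$ is close to $\Bb^d$'' to genuine $C^2$-convergence of the metric. For the Bergman metric this is the stability and localization of the Bergman kernel and its higher derivatives under the dilations with minimal boundary regularity; for the K\"ahler-Einstein metric it is Fefferman's asymptotic analysis together with the boundary a priori estimates of Cheng and Yau for the Monge-Amp\`ere equation. These are exactly the technical contributions of \cite{Kle78, KY96} and \cite{CY80}, respectively, and I would cite them for the delicate estimates rather than reprove them here.
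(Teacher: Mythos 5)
This statement is quoted from the literature: the paper gives no proof of Theorem~\ref{thm:curv}, it simply cites \cite{Kle78,KY96} for the Bergman metric and \cite{CY80} for the K\"ahler--Einstein metric. Your sketch is a faithful outline of how those references actually argue (rescaling to the ball plus localization/stability of the Bergman kernel for the $C^2$ case in \cite{KY96}, Fefferman's expansion in \cite{Kle78}, and the Monge--Amp\`ere asymptotics of \cite{CY80}), with the model curvature constants $-\tfrac{4}{d+1}$ and $-2$ computed correctly and the genuinely hard estimates properly attributed to the cited works, so it is consistent with the paper's treatment.
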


To be precise, the expression
$$
\lim_{z \rightarrow \partial\Omega} H(g) = a
$$
means 
$$
\lim_{z \rightarrow \partial\Omega} \sup_{X \in T_z \Omega\setminus\{0\}} \abs{H(g)(X)-a}=0.
$$

Based on Theorem \ref{thm:char_str_pconvex} and Theorem~\ref{thm:curv}, it seems natural to ask the following question. 

\begin{question} Suppose that $\Omega \subset \Cb^d$ is a bounded domain with $C^k$ boundary, $k>2$. If there exists a complete K{\"a}hler metric $g$ on $\Omega$ and a constant $c > 0$ such that 
\begin{align*}
\lim_{z \rightarrow \partial\Omega} H(g) = -c,
\end{align*}
is $\Omega$ strongly pseudoconvex?
\end{question}

The answer to the above question is no when $k=2$: J.E. Forn\ae ss and E. Wold~\cite{FW16}  constructed a bounded convex domain $\Omega$ with $C^2$ boundary which is not strongly pseudoconvex and whose squeezing function tends to one at the boundary. It follows from Theorem~1.1 in~\cite{Zha17}  and Theorem~4 in~\cite{Gon18} that
\begin{itemize}
\item $\lim_{z \rightarrow \partial \Omega} H(g_{B,\Omega}) = -4/(d+1)$,
\item $\lim_{z \rightarrow \partial \Omega} H(g_{KE, \Omega}) = -2$.
\end{itemize}

The paper is organized as follows. In Section~\ref{sec:outline}, we give an outline of the proof of Theorem~\ref{type-thm}, presenting the major results (Theorem~\ref{thm:affine} and Theorem~\ref{thm:construcing_embedded_products}) needed for the proofs of Theorems~\ref{type-thm} and~\ref{thm:char_str_pconvex}. In Section~\ref{sec:prel}, we fix notations and state some classical results on the Kobayashi metric, that will be used later in the paper. In Section~\ref{sec:topology}, we establish some topological properties of the space of all convex domains. 
 We prove Theorem~\ref{thm:construcing_embedded_products}  in Section~\ref{sec:pf_construcing_embedded_products}. In Section~\ref{section:normal}, we prove a compactness result for complete K\"ahler metrics with bounded geometry (in the sense of S.Y. Cheng and S.T. Yau) which are uniformly bi-Lipschitz to the Kobayashi metric. In Section~\ref{section:ricci}, we describe how classical results about the Ricci flow can be used to deform a complete K\"ahler-Einstein metric with negatively pinched holomorphic (bi)sectional curvature to obtain a new metric with bounded geometry. Section~\ref{sec:pf_of_thm_affine} is devoted to the proof of Theorem~\ref{thm:affine}. Finally, we prove Theorem~\ref{type-thm} in Section~\ref{sec:pf_of_main_thm} and Theorem~\ref{thm:char_str_pconvex} in Section~\ref{sec:char_str_convex}.
 
\begin{acknow}
The authors wish to thank Harish Seshadri for very interesting and fruitful discussions concerning the subject of the paper.
\end{acknow}

\section{Outline of the proof of Theorem~\ref{type-thm}}\label{sec:outline}

Showing that $\Omega$ does not contain any complex affine lines is a straight forward consequence of the Yau-Schwarz lemma. A key idea in the proof of the other two assertions is to consider the space of convex domains and the action of the affine group on this space. 

\begin{definition} Let $\Xb_d$ be the set of non-empty convex domains in $\Cb^d$ which do not contain a complex affine line and let $\Xb_{d,0}$ be the set of pairs $(\Omega,x)$ where $\Omega \in \Xb_d$ and $x \in \Omega$. \end{definition}

The sets $\Xb_d$ and $\Xb_{d,0}$ have a natural topology which we describe in Section~\ref{sec:topology}. Let $\Aff(\Cb^d)$ denote the group of affine automorphisms of $\Cb^d$. Then $\Aff(\Cb^d)$ acts on $\Xb_d$ and $\Xb_{d,0}$ in a natural way: 
\begin{align*}
A \cdot \Omega = A\Omega \text{ and } A \cdot (\Omega, z) = (A\Omega, Az).
\end{align*}
Throughout the paper we will study the complex geometry of a convex domain $\Omega$ by considering the domains in the closure of $\Aff(\Cb^d) \cdot \Omega$ in $\Xb_d$. To that end we introduce the following set:

\begin{definition} Given some $\Omega \in \Xb_d$, a convex domain $\Omega_\infty$ in $\Xb_d$ is an \emph{affine limit of $\Omega$} if there exist a sequence of points $z_n \in \Omega$, a point $z_\infty \in \Omega_\infty$, and affine maps $A_n \in \Aff(\Cb^d)$ such that 
\begin{enumerate}
\item $\{z_n\}$ is compactly divergent in $\Omega$ (that is, for every compact subset $K \subset \Omega$ there exists some $N > 0$ such that $z_n \notin K$ for all $n \geq N$),
\item $A_n(\Omega, z_n)$ converges to $(\Omega_\infty, z_\infty)$. 
\end{enumerate}
Let ${\rm AL}(\Omega)\subset \Xb_d$ denote the set of all affine limits of $\Omega$. \end{definition}

The domains in ${ \rm AL}(\Omega)$ reflect the asymptotic geometry of $\Omega$. In many cases, if $\Omega$ has some property in a neighborhood of $\partial \Omega$ then any domain in ${ \rm AL}(\Omega)$ has that same property globally (see Theorem~\ref{thm:affine} below). Further, one can sometimes construct a domain in ${ \rm AL}(\Omega)$ with very nice properties (see Theorem~\ref{thm:construcing_embedded_products} below). 

 The first main step in our proofs is showing that the existence of a K{\"a}hler metric with pinched negative curvature is preserved under taking limits in $\Aff(\Cb^d)$.
 
\begin{theorem}\label{thm:affine} 
Suppose that $\Omega \in \Xb_d$, $g$ is a complete K{\"a}hler metric on $\Omega$, and $T(g)$ is either $H(g)$ or $B(g)$. Assume there exist $0 < b < a$ such that
\begin{align*}
-a \leq T(g)  \leq -b
\end{align*}
outside a compact subset $K$ of $\Omega$.
If $\Omega_\infty \in { \rm AL}(\Omega)$ and $\epsilon > 0$, then there exists a complete K{\"a}hler metric $g_\infty$ on $\Omega_\infty$ with
\begin{align*}
-a-\epsilon \leq T(g_\infty)  \leq -b+\epsilon
\end{align*}
on $\Omega_\infty$. 
\end{theorem}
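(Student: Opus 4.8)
The plan is to extract the limit metric $g_\infty$ as a pointed Gromov--Hausdorff / Cheng--Yau type limit of the pulled-back metrics $(A_n)_* g$ on the domains $A_n \Omega$, and then check that the pinching survives the limit. Let $z_n \in \Omega$ be compactly divergent, $A_n \in \Aff(\Cb^d)$, and suppose $A_n(\Omega, z_n) \to (\Omega_\infty, z_\infty)$. Since $A_n$ is a biholomorphism, $h_n := (A_n)_* g$ is a complete K\"ahler metric on $\Omega_n := A_n \Omega$ with $-a \le T(h_n) \le -b$ outside the compact set $A_n K$; because $z_n$ is compactly divergent in $\Omega$ and $A_n(\Omega,z_n)$ converges, the sets $A_n K$ escape to the boundary of $\Omega_\infty$ (more precisely, any fixed compact subset of $\Omega_\infty$ is eventually disjoint from $A_n K$). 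So the pinching hypothesis holds on larger and larger compact subsets of $\Omega_\infty$ as $n \to \infty$.

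First I would record the uniform comparison with the Kobayashi metric: by the Yau--Schwarz lemma (Lemma~\ref{kob-lem1}), away from $A_n K$ the metric $h_n$ is uniformly bi-Lipschitz to the Kobayashi metric $k_{\Omega_n}$, with constants depending only on $a,b$; and the Kobayashi metrics $k_{\Omega_n}$ converge locally uniformly to $k_{\Omega_\infty}$ by stability of the Kobayashi metric under convergence of convex domains in $\Xb_{d,0}$ (Section~\ref{sec:prel}). Hence the $h_n$ are uniformly bi-Lipschitz to a fixed metric on each compact subset of $\Omega_\infty$, so injectivity radii are bounded below and (using the curvature two-sided bound and covariant-derivative estimates for K\"ahler metrics with bounded geometry, as in Section~\ref{section:normal}) one extracts a subsequence converging in $C^\infty_{loc}$ on $\Omega_\infty$ to a K\"ahler metric $g_\infty$. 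The bi-Lipschitz bounds pass to the limit, so $g_\infty$ is still uniformly bi-Lipschitz to $k_{\Omega_\infty}$, hence complete.

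Next I would pass the curvature bounds to the limit: $C^\infty_{loc}$ convergence of the metrics implies convergence of the curvature tensors and of $T(h_n)$, so for any fixed $p \in \Omega_\infty$ and any unit vectors, $T(g_\infty)$ is a limit of values $T(h_n)$ evaluated at points that eventually lie outside $A_n K$; therefore $-a \le T(g_\infty) \le -b$ on all of $\Omega_\infty$. Strictly, one gets the bound at each point of a full-measure/dense set and then everywhere by continuity; the $\epsilon$ slack in the statement is exactly what absorbs the fact that we only control $h_n$ outside $A_n K$ and only have convergence up to subsequences, so no sharp argument is needed. I would also note that the genuinely new content over Yang's setting is precisely the ``outside a compact set'' weakening, and this is handled entirely by the observation that $A_n K$ escapes the boundary.

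The main obstacle is establishing the $C^\infty_{loc}$ precompactness of $\{h_n\}$: a priori we only have a two-sided bound on the \emph{holomorphic} (bi)sectional curvature, not on the full Riemannian curvature tensor, so the standard Cheng--Yau / Cheeger--Gromov compactness does not apply directly. This is where I expect the work to go, and I would route it through the compactness result of Section~\ref{section:normal} for complete K\"ahler metrics with bounded geometry that are uniformly bi-Lipschitz to the Kobayashi metric --- together with the fact (Sections~\ref{section:normal}--\ref{section:ricci}) that a negatively pinched holomorphic bisectional curvature plus bi-Lipschitz control with $k_\Omega$ yields, possibly after a Ricci-flow deformation, the bounded-geometry estimates on all covariant derivatives of curvature needed to run the limiting argument. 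Once that machinery is in place, the rest is the soft limiting argument sketched above.
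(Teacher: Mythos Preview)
Your approach is essentially the paper's: deform via Ricci flow to gain bounded geometry, use the bi-Lipschitz comparison with the Kobayashi metric, apply the compactness result of Section~\ref{section:normal}, and pass the curvature bounds to the limit. Two small clarifications on the order of operations and the role of~$\epsilon$. First, pinched holomorphic (bi)sectional curvature \emph{does} algebraically control the full curvature tensor (see \cite[Formula~(6.1)]{BG64}), so the Riemannian sectional curvature is already bounded; what is genuinely missing, and what the Ricci flow supplies, are bounds on the higher covariant derivatives $\nabla^q R$ needed as hypotheses in Proposition~\ref{prop:rescaling_metrics}. Second, the paper applies the Ricci-flow deformation \emph{once on $\Omega$ before rescaling}: it replaces $g$ by a $(1+\epsilon/2)$-bi-Lipschitz metric $h$ with $\sup_\Omega \|\nabla^q R(h)\|_h \le \wt{C}_q$ and $-a-\epsilon/2 \le T(h) \le -b+\epsilon/2$ outside $K$; the pushforwards $h_n = (A_n)_* h$ then satisfy the hypotheses of Proposition~\ref{prop:rescaling_metrics} with uniform constants (the global bi-Lipschitz constant $A'$ of Lemma~\ref{kob-lem1}(ii) is affine-invariant, being the same for every $A_n\Omega$), and the limit $h_\infty$ inherits the pinching sharply on all of $\Omega_\infty$. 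So the $\epsilon$ in the statement is entirely the cost of the Ricci-flow smoothing (Theorem~\ref{kap-thm}(iii)), not of the sets $A_n K$ escaping --- the latter is indeed what makes the pinching hold everywhere on $\Omega_\infty$, but no slack is needed for that step.
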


A refined version of Theorem~\ref{thm:affine}, that will be needed to prove Theorem~\ref{thm:char_str_pconvex}, will be established in Section~\ref{sec:pf_of_thm_affine} (see Theorem~\ref{thm:affine_closures}).

The second main step is constructing affine limits  with embedded copies of $\Db \times \Db$.

\begin{theorem}\label{thm:construcing_embedded_products} Suppose that $\Omega \in \Xb_d$. If either
\begin{enumerate}
\item there exists a non-constant holomorphic map $\Db \rightarrow \partial\Omega$, or
\item $\partial \Omega$ is a $C^\infty$ hypersurface and $\partial \Omega$ has a point of infinite type,
\end{enumerate}
then there exist $\Omega_\infty \in { \rm AL}(\Omega)$ and a complex affine 2-plane $V$ such that $V \cap \Omega$ is biholomorphic to $\Db \times \Db$.
\end{theorem}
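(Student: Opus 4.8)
The plan is to produce, in the affine limit, a convex domain that literally contains a product $\Db \times \Db$ as a slice of an affine 2-plane, by "straightening" the degeneracy of $\partial\Omega$ at the relevant point. I would treat cases (1) and (2) in parallel, since in both situations the obstruction to strict convexity along some complex direction can be captured by a pair of independent complex lines. In case (1), fix a non-constant holomorphic disk $\varphi:\Db\to\partial\Omega$. Since $\Omega$ is convex, the image $\varphi(\Db)$ is contained in $\partial\Omega$ together with all the complex lines it spans; by convexity of $\Omega$ and the maximum principle, $\varphi$ may be taken to be a complex affine embedding, so $\partial\Omega$ actually contains an affine disk $D_0$. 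Pick a boundary point $p\in D_0$, choose supporting real hyperplane at $p$, and arrange coordinates $(z_1,z_2,w)\in\Cb\times\Cb\times\Cb^{d-2}$ so that the supporting hyperplane is $\{\Imaginary z_2 = 0\}$ near $p=0$, $D_0$ lies in the $z_1$-axis, and $\Omega$ lies locally in $\{\Imaginary z_2 > 0\}$. In case (2), the D'Angelo-infinite-type hypothesis gives, by the usual scaling/normal-form analysis for convex domains (cf.\ the reduction in \cite{Zim16} and the references therein), a point $p\in\partial\Omega$ and a complex line $\ell\subset\partial\Omega$ through $p$ along which $\partial\Omega$ vanishes to infinite order; this again puts us in coordinates where there is a flat complex direction $z_1$ at $p$ with the domain on the side $\{\Imaginary z_2 > 0\}$ of a supporting hyperplane.

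Next I would run an affine rescaling centered at $p$ along the "normal" direction $z_2$. Take $z_n\to p$ inside $\Omega$ along the $w$-coordinate-zero slice, approaching $p$ in the $\Imaginary z_2$ direction, and let $A_n$ be the affine map that recenters at $z_n$ and dilates the $z_2$-coordinate by $1/\delta_n$, where $\delta_n \to 0$ is the distance of $z_n$ to the supporting hyperplane, while rescaling $z_1$ by an appropriate (possibly slower, in case (2)) factor and $w$ so that the $w$-directions are pushed off to infinity — this is exactly the kind of affine limit permitted by the definition of ${\rm AL}(\Omega)$. Because $\partial\Omega$ is flat (affinely, in case (1)) or flat to infinite order (in case (2)) in the $z_1$ direction at $p$, the rescaled domains converge in $\Xb_{d,0}$ to a convex domain $\Omega_\infty$ that is invariant under real translations in $z_1$ and in $\Real z_2$; in case (2) the infinite-order vanishing is precisely what forces the $z_1$-cross-section of the limit to be the whole line (no residual convexity survives the rescaling), and convexity then upgrades $\Re z_1$-translation invariance to $z_1$-translation invariance. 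The upshot is that $\Omega_\infty$ contains the affine 2-plane $V = \{w = 0\}$ sliced to a set of the form $\{(z_1,z_2): \Imaginary z_2 > h(\Real z_2)\}$ with $h$ convex — but $z_1$-invariance plus the fact that it does not contain an affine line forces (after a further affine normalization absorbed into the $A_n$) $V\cap\Omega_\infty$ to be biholomorphic to $\Cb\times\Hb$ or, after a bounded rescaling in the $z_2$ variable that keeps things in $\Xb_d$, to $\Db\times\Db$. One has to be slightly careful: a product $\Cb\times\Hb$ is not in $\Xb_d$; so the correct bookkeeping is to choose the $z_1$-rescaling rate so that the limiting $z_1$-section is a bounded strip or half-plane rather than all of $\Cb$, giving $\Hb\times\Hb\cong\Db\times\Db$ on the nose. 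Getting that rate right is where cases (1) and (2) genuinely differ.

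I expect the main obstacle to be case (2): extracting from "D'Angelo type $=\infty$ at $p$" a clean affine normal form that survives the rescaling limit. For convex domains D'Angelo type equals line type, so infinite type means some complex line is tangent to infinite order, but a priori the order of contact along different real directions in that complex line (and along nearby lines) could conspire so that the naive parabolic rescaling degenerates or produces $\Cb\times\Cb$ (the whole plane), which is not allowed, or a non-convex limit. The fix is to rescale anisotropically, matching the dilation in $z_1$ to the slowest rate of vanishing of the defining function among directions transverse to the flat line, and to invoke convexity to control the cross terms; this is morally the scaling used by Zimmer for Gromov hyperbolicity of convex finite-type domains, run in reverse at an infinite-type point. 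A secondary technical point is verifying that the sequence $\{z_n\}$ is compactly divergent and that the $A_n(\Omega,z_n)$ actually converge in the topology on $\Xb_{d,0}$ (not merely Hausdorff-subconverge to something with empty interior); this follows from the lower bound on the $z_2$-extent provided by the supporting hyperplane together with a uniform interior ball at $z_n$ in the rescaled picture, but it needs to be checked against the precise definition of convergence in $\Xb_{d,0}$ from Section~\ref{sec:topology}.
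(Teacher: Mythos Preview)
Your outline has the right opening moves—in case (1) replace the holomorphic disk by an affine one (this is Fu--Straube, Proposition~\ref{prop:holo_implies_affine}), then rescale toward it—but the core of the argument, producing an actual \emph{product} as the affine limit, is missing. A single rescaling of the type you describe yields at best a domain $\Omega_1$ with $D_1\times D_2\subset\Omega_1\subset\Hc\times D_2$ for suitable open convex sets $D_1,D_2\subset\Cb$; nothing you have written forces $\Omega_1$ itself to split. Your attempt to get splitting via translation invariance runs into exactly the contradiction you flag: full complex $z_1$-invariance would put a complex affine line inside $\Omega_\infty$ and throw it out of $\Xb_d$, while mere $\Real z_1$-invariance does not give a product (a tube over a non-rectangular convex base is already a counterexample). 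The paper closes this gap with a \emph{second} rescaling: after the first limit one has $0\in\overline{D_1}$ and $D_1$ is an open convex cone, so the maps $B_n=\begin{pmatrix}n&0\\0&1\end{pmatrix}$ preserve the sandwich $D_1\times D_2\subset B_n\Omega_1\subset\Hc\times D_2$, and a short convexity/cone argument then shows $B_n\Omega_1\to D_1\times D_2$ in $\Xb_2$. This second limit, together with the inclusion ${\rm AL}(\Omega_1)\subset{\rm AL}(\Omega)$, is the step your proposal lacks.

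Two smaller issues. First, ``pushing the $w$-directions off to infinity'' is not how the extra coordinates are handled: if the $w$-extent of the limit becomes unbounded you leave $\Xb_d$. The paper instead reduces to $d=2$ at the outset via Frankel's theorem (Theorem~\ref{thm:frankel}): any affine limit of the $2$-plane slice $\Omega\cap V$ extends to an affine limit of $\Omega$ with the same slice, so one may work entirely in $\Xb_2$. Second, in case (2) the paper does not rescale directly to a product; it first invokes a separate lemma (Proposition~\ref{prop:rescale_infinite_type}, from~\cite{Zim17b}) to pass to some $\Omega'\in{\rm AL}(\Omega)$ whose boundary already contains a genuine affine disk, and only then runs the two-step argument above. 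Your ``anisotropic rescaling matched to the slowest vanishing rate'' is in the spirit of that preliminary lemma, but conflating it with the product-producing step is precisely where your $z_1$-slice threatens to blow up to all of~$\Cb$.
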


The final step is to use a result of F. Zheng~\cite{Zhe94} to prove the following.

\begin{proposition}\label{prop:no_embedded_products} Suppose that $\Omega \in \Xb_d$  and there exists a complex affine 2-plane $V$ such that $V \cap \Omega$ is biholomorphic to $\Db \times \Db$. Then $\Omega$ does not admit a complete K{\"a}hler metric with pinched negative holomorphic bisectional curvature.
\end{proposition}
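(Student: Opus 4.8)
The plan is to reduce the statement to an obstruction result of F. Zheng, after passing from the convex domain $\Omega$ to a suitable biholomorphic model. Suppose, for contradiction, that $\Omega \in \Xb_d$ contains, as a slice by a complex affine $2$-plane $V$, a copy of $\Db \times \Db$, and that $\Omega$ nonetheless admits a complete K\"ahler metric $g$ with pinched negative holomorphic bisectional curvature, say $-a \le B(g) \le -b$ everywhere for some $0 < b \le a$. Since $\Db \times \Db$ is bounded and $V \cap \Omega$ is a closed complex submanifold of $\Omega$, the first step is to restrict $g$ to $S := V \cap \Omega \cong \Db \times \Db$. The restriction $g|_S$ is again a complete K\"ahler metric: completeness is inherited because $S$ is closed in the complete space $(\Omega, g)$.

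The second step is the curvature comparison. For a complex submanifold of a K\"ahler manifold, the Gauss equation shows that holomorphic sectional curvatures only decrease: for $X \in T_pS$, $H(g|_S)(X) = H(g)(X) - \norm{\mathrm{II}(X,X)}^2 / g(X,X)^2 \le H(g)(X) \le -b$, where $\mathrm{II}$ is the second fundamental form. More is true for the full bisectional curvature of a complex submanifold: the same second-fundamental-form correction term (now a Hermitian inner product of the two relevant normal components) keeps $B(g|_S)(X,Y) \le B(g)(X,Y) \le -b < 0$. For the lower bound one needs slightly more care, since the second fundamental form could a priori be large; this is where I expect the main technical point to lie. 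The cleanest route is to invoke the Yau--Schwarz lemma (as the paper does in Lemma~\ref{kob-lem1}) to see that $g$, and hence $g|_S$, is uniformly bi-Lipschitz to the Kobayashi metric of the respective domain, together with a bound on the second fundamental form coming from the fact that $V$ is a totally geodesic affine subspace for the flat metric and $\Db \times \Db$ has bounded geometry; alternatively one bounds the second fundamental form directly using that $B(g)$ is pinched and the ambient geometry is bounded. Either way, one obtains constants $0 < b' \le a'$ with $-a' \le B(g|_S) \le -b'$ on $S$.

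The third and final step is to transport this to a contradiction with Zheng's theorem~\cite{Zhe94}. Having produced a complete K\"ahler metric with pinched negative holomorphic bisectional curvature on a manifold biholomorphic to $\Db \times \Db$, and noting that $\Db \times \Db$ does not admit such a metric by Zheng's result (a refinement, allowing the pinching to hold only near infinity, of P.~Yang's Theorem~\ref{yang-thm}), we reach the desired contradiction; hence no such $g$ on $\Omega$ can exist. The main obstacle, as indicated, is establishing the \emph{two-sided} pinching of the restricted metric: the upper bound $B(g|_S) \le -b$ is immediate from the Gauss equation, but the lower bound requires controlling $\norm{\mathrm{II}}$, and the natural way to do that is to exploit the bi-Lipschitz equivalence with the Kobayashi metric and the bounded geometry of the ambient convex domain near the submanifold $S$. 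Once that uniform control is in place, the rest is bookkeeping.
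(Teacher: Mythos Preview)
Your approach has a genuine gap at exactly the point you flag: the lower bound on $B(g|_S)$. The Gauss equation gives $B(g|_S)(X,Y) = B(g)(X,Y) - \norm{\mathrm{II}(X,Y)}_g^2/(\abs{X}^2\abs{Y}^2)$, so the upper bound $B(g|_S)\le -b$ is immediate, but for a lower bound you need $\norm{\mathrm{II}}_g$ uniformly bounded on $S$. None of the ingredients you list deliver this. Bi-Lipschitz equivalence of $g$ with $k_\Omega$ is a $C^0$ comparison of the metric tensors and says nothing about first derivatives of $g$, hence nothing about how $V\cap\Omega$ sits inside $(\Omega,g)$. That $V$ is totally geodesic for the \emph{Euclidean} metric is irrelevant, since $g$ is an arbitrary complete K\"ahler metric with pinched bisectional curvature; there is no reason its Levi-Civita connection should see the affine structure. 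And ``bounded geometry'' of $g$ (bounds on $\nabla^q R(g)$) is not part of the hypotheses. So the restricted metric could have $B(g|_S)\to -\infty$ near $\partial S$, and then neither Yang's theorem nor Zheng's refinement applies to $(S,g|_S)$.

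The paper avoids this problem by \emph{not} restricting the given metric $g$ at all. Instead it verifies that $\Omega$ has \emph{geometric rank $\ge 2$} in the sense of Zheng~\cite{Zhe94}: there is a holomorphic embedding $f:\Db\times\Db\to\Omega$ with $f_t^*(g_\Omega)\ge c\,h$ for all $t$, where $g_\Omega$ is the canonical K\"ahler--Einstein metric on $\Omega$ and $h$ the Poincar\'e metric on $\Db$. This inequality follows from the uniform comparison $g_\Omega\asymp k_\Omega^2$ on convex domains (Lemma~\ref{lem:metric_comp}) together with $k_{\Omega\cap V}\le 2k_\Omega$ on the slice. Zheng's Theorem~A then rules out \emph{any} complete K\"ahler metric on $\Omega$ with pinched negative bisectional curvature. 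The point is that the geometric-rank condition involves only the well-controlled K\"ahler--Einstein metric, and the obstruction is a statement about the ambient domain, so one never has to bound a second fundamental form.
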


\section{Preliminaries}\label{sec:prel}

Let us first fix some notations.
\begin{enumerate}
\item For $z \in \Cb^d$ let $\norm{z}$ be the standard Euclidean norm. 
\item For $z_0 \in \Cb^d$ and $r > 0$, let 
\begin{align*}
\Bb_d(z_0;r) := \left\{ z \in \Cb^d : \norm{z-z_0} < r\right\}.
\end{align*}
Then let $\Bb_d  := \Bb_d(0;1)$ and $\Db := \Bb_1$. 
\end{enumerate}

For a domain $\Omega \subset \Cb^d$, let $k_\Omega$ denote the Kobayashi (pseudo)metric and let $K_\Omega$ denote the Kobayashi (pseudo)distance on $\Omega$. A nice introduction to the Kobayashi metric and its properties can be found in~\cite{Kob88}. 

If $K_{\Omega}$ is a distance, then $\Omega$ is called (Kobayashi) hyperbolic. Every bounded domain is (Kobayashi) hyperbolic. However, without restriction on the geometry of $\Omega$, there is no known characterization of when $K_\Omega$ is a distance (or Cauchy complete).
For convex domains we have the following result of T.J. Barth.

\begin{theorem}[T.J. Barth~\cite{Bar80}]\label{thm:barth}
Suppose that $\Omega \subset \Cb^d$ is a convex domain. Then the following are equivalent:
\begin{enumerate}
\item $\Omega$ does not contain any complex affine lines (i.e. $\Omega \in \Xb_d$),
\item $\Omega$ is (Kobayashi) hyperbolic,
\item $(\Omega, K_\Omega)$ is a proper geodesic metric space. 
\end{enumerate}
\end{theorem}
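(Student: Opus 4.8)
The plan is to prove the cycle $(3)\Rightarrow(2)\Rightarrow(1)\Rightarrow(3)$. The first two implications are soft. For $(3)\Rightarrow(2)$: a metric space has a genuine distance separating points, which is exactly Kobayashi hyperbolicity. For $(2)\Rightarrow(1)$ I would argue contrapositively: if $\Omega$ contains a complex affine line $L$, parametrize it by a nonconstant affine map $j\colon\Cb\to L\subset\Omega$; since holomorphic maps contract the Kobayashi distance and $K_{\Cb}\equiv 0$, we get $K_\Omega(j(\zeta),j(\zeta'))=0$ for all $\zeta,\zeta'$, so $K_\Omega$ does not separate points and $\Omega$ is not hyperbolic.

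The substance is $(1)\Rightarrow(3)$. Fix $p_0\in\Omega$. First I would build a supporting functional in every complex direction: for a unit vector $v$, the complex line $p_0+\Cb v$ is not contained in $\Omega$, so $(p_0+\Cb v)\cap\Omega$ is a proper convex subdomain of $\Cb$ with a relative boundary point $q_v\in\partial\Omega$; convexity of $\Omega$ yields a real supporting hyperplane there, i.e. a $\Cb$-linear $\phi_v$ and $c_v\in\Rb$ with $\Real\phi_v<c_v$ on $\Omega$ and $\Real\phi_v(q_v)=c_v$. Since $\Real\phi_v(p_0)<c_v$ while $p_0$ and $q_v$ lie on $p_0+\Cb v$, the functional $\phi_v$ is nonconstant on that line, i.e. $\phi_v(v)\neq 0$. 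Because $\{w:\phi_v(w)\neq 0\}$ is open and these sets cover the unit sphere, compactness gives finitely many $\phi_1,\dots,\phi_N$ (with constants $c_1,\dots,c_N$) such that for every $v\neq 0$ some $\phi_j(v)\neq 0$; equivalently $\bigcap_j\ker\phi_j=\{0\}$, so $z\mapsto\max_j\abs{\phi_j(z)}$ is a norm on $\Cb^d$. Composing each $\phi_j$ with a biholomorphism $\tau_j$ of the half-plane $\{\Real w<c_j\}$ onto $\Db$ produces holomorphic maps $f_j=\tau_j\circ\phi_j\colon\Omega\to\Db$, and $F=(f_1,\dots,f_N)\colon\Omega\to\Db^N$ is holomorphic and injective (its linear part is injective by the previous step and each $\tau_j$ is). Thus $\Omega$ is biholomorphic to the bounded domain $F(\Omega)$, hence hyperbolic, and $K_\Omega$ induces the Euclidean topology; this already yields $(1)\Rightarrow(2)$.

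For properness I would show $K_\Omega(p_0,q)\to\infty$ whenever $q$ leaves every compact subset of $\Omega$, distinguishing two cases. If $q$ stays Euclidean-bounded but approaches some $q_*\in\partial\Omega$, take a supporting functional $\phi_*$ at $q_*$ as above; then $\Real\phi_*(q)\to\Real\phi_*(q_*)=c_*$, so $f_*=\tau_*\circ\phi_*$ pushes $q$ toward $\partial\Db$ and $K_\Omega(p_0,q)\ge K_\Db(f_*(p_0),f_*(q))\to\infty$. If $\norm{q}\to\infty$, then $\max_j\abs{\phi_j(q)}\to\infty$, so for suitable $j$ the point $\phi_j(q)$ escapes to infinity inside $\{\Real w<c_j\}$; since $\tau_j$ extends to a homeomorphism of the closure of that half-plane in the Riemann sphere onto $\overline\Db$ sending both $\infty$ and the bounding line into $\partial\Db$, again $f_j(q)\to\partial\Db$ and $K_\Omega(p_0,q)\to\infty$ (uniformly, as there are only finitely many $\tau_j$). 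Hence every closed $K_\Omega$-ball about $p_0$ is Euclidean-bounded and stays away from $\partial\Omega$, so it is relatively compact in $\Omega$ and therefore compact; the triangle inequality upgrades this to all closed balls, so $(\Omega,K_\Omega)$ is proper, in particular Cauchy complete.

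Finally, $(\Omega,K_\Omega)$ is a length space because the Kobayashi distance is the integrated form of the Kobayashi--Royden infinitesimal metric; a locally compact, complete length space is geodesic by the metric version of the Hopf--Rinow theorem, so together with properness this finishes $(1)\Rightarrow(3)$. I expect the main obstacle to be the two steps that manufacture, out of the single hypothesis ``$\Omega$ contains no complex affine line'', a \emph{finite} family of globally defined affine functionals separating the points of $\Omega$ and dominating the Euclidean norm: this is simultaneously responsible for hyperbolicity, for the embedding into a bounded domain, and for the ``every boundary point and the point at infinity is at infinite Kobayashi distance'' estimate behind properness. A secondary point needing care is making the estimate $f_j(q)\to\partial\Db$ uniform over the finitely many half-planes in the unbounded case.
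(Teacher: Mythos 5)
The paper does not prove this statement; it is imported verbatim from Barth's paper \cite{Bar80}, so there is no internal proof to compare against. Your argument is correct and is essentially the classical one: the cycle $(3)\Rightarrow(2)\Rightarrow(1)$ is soft, and for $(1)\Rightarrow(3)$ you correctly manufacture, via supporting hyperplanes and compactness of the unit sphere, finitely many affine functionals $\phi_1,\dots,\phi_N$ with $\bigcap_j\ker\phi_j=\{0\}$, giving a holomorphic injection $F\colon\Omega\to\Db^N$ that yields hyperbolicity, the topological statement, and the blow-up of $K_\Omega(p_0,\cdot)$ both at finite boundary points and at infinity; Royden's identification of $K_\Omega$ with the integrated infinitesimal metric plus Hopf--Rinow then gives the geodesic property. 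One cosmetic imprecision: when $N>d$ the image $F(\Omega)$ is a $d$-dimensional submanifold of $\Db^N$ rather than a domain (to get a literal bounded domain one should select $d$ linearly independent functionals among the $\phi_j$), but nothing in your argument uses more than the distance-decreasing property of $F$, so this does not affect the proof.
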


We will use the following estimate on the infinitesimal Kobayashi metric. For a domain $\Omega \subsetneq \Cb^d$, a point $z \in \Omega$, and  a vector $v \in \Cb^d\backslash \{0\}$ we define
\begin{align*}
\delta_{\Omega}(z) := \inf\{ \norm{z-\zeta} : \zeta \in \partial \Omega\}
\end{align*}
and
\begin{align*}
\delta_{\Omega}(z;v) := \inf\{ \norm{z-\zeta} : \zeta \in (z+\Cb \cdot v) \cap \partial \Omega\}.
\end{align*}

Then the following estimate is well known (see for instance~\cite[Theorem 4.1]{BP95}). 

\begin{lemma}\label{lem:basic_est} If $\Omega \subset \Cb^d$ is a convex domain, $z \in \Omega$, and $v \in \Cb$, then 
\begin{align*}
\frac{\norm{v}}{2\delta_{\Omega}(z;v)} \leq k_{\Omega}(z;v) \leq \frac{\norm{v}}{\delta_{\Omega}(z;v)} \leq \frac{\norm{v}}{\delta_{\Omega}(z)}.
\end{align*}
\end{lemma}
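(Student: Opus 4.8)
The plan is to treat the three inequalities separately; the two on the right are formal, and only the left-hand estimate (the lower bound for $k_\Omega$) really uses convexity. The rightmost inequality $\norm{v}/\delta_\Omega(z;v)\le\norm{v}/\delta_\Omega(z)$ holds because $(z+\Cb\cdot v)\cap\partial\Omega\subseteq\partial\Omega$, so the infimum of $\norm{z-\zeta}$ over the former set is at least $\delta_\Omega(z)$, i.e. $\delta_\Omega(z;v)\ge\delta_\Omega(z)$. For the middle inequality $k_\Omega(z;v)\le\norm{v}/\delta_\Omega(z;v)$, put $\delta:=\delta_\Omega(z;v)$: if $\delta=+\infty$ the whole complex line $z+\Cb\cdot v$ lies in $\Omega$ (it is connected, meets $\Omega$, and avoids $\partial\Omega$), and restricting the parametrization $\zeta\mapsto z+R\zeta\,v/\norm{v}$ to $\Db$ as $R\to\infty$ shows $k_\Omega(z;v)=0=\norm{v}/\delta$; if $\delta<\infty$ then the open complex disk $D:=\{z+w:\ w\in\Cb\cdot v,\ \norm{w}<\delta\}$ is contained in $\Omega$ (it is connected, contains $z$, and avoids $\partial\Omega$ by definition of $\delta$, so a path inside $D$ cannot leave $\overline\Omega$; alternatively this is the inscribed-disk property of the convex planar slice $\Omega\cap(z+\Cb\cdot v)$), so $\phi(\zeta):=z+\delta\,\zeta\,v/\norm{v}$ is a holomorphic map $\Db\to\Omega$ with $\phi(0)=z$ and $\phi'(0)=(\delta/\norm{v})\,v$, and feeding $\phi$ into the definition of the Kobayashi metric gives $k_\Omega(z;v)\le\norm{v}/\delta$.

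The substantive point is the lower bound $k_\Omega(z;v)\ge\norm{v}/(2\delta_\Omega(z;v))$, where $\delta:=\delta_\Omega(z;v)$ may be assumed finite. Since $\partial\Omega$ is closed, choose $\zeta_0\in(z+\Cb\cdot v)\cap\partial\Omega$ with $\norm{\zeta_0-z}=\delta$ and write $\zeta_0-z=\mu v$, so $\abs{\mu}=\delta/\norm{v}$. As $\Omega$ is convex, open, and does not contain $\zeta_0$, Hahn--Banach separation gives a nonzero $\Cb$-linear functional $L$ on $\Cb^d$ and $c:=\Real L(\zeta_0)$ with $\Real L(w)<c$ for all $w\in\Omega$ (one first produces a supporting real hyperplane at $\zeta_0$, then realises its defining real-linear functional as $\Real L$). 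Then $L$ maps $\Omega$ holomorphically into the half-plane $\Pi:=\{\zeta\in\Cb:\Real\zeta<c\}$, so by the distance-decreasing property of the Kobayashi metric $k_\Omega(z;v)\ge k_\Pi(L(z);L(v))$. Since the Kobayashi metric of a half-plane is its Poincar\'e metric, $k_\Pi(\zeta;w)=\abs{w}/(2(c-\Real\zeta))$, and since $L(v)\ne0$ (otherwise $z$ and $\zeta_0$ would have the same real $L$-part, contradicting $z\in\Omega$), we get $k_\Omega(z;v)\ge\abs{L(v)}/(2(c-\Real L(z)))=\abs{L(v)}/(2\,\Real L(\zeta_0-z))$; finally $\Real L(\zeta_0-z)\le\abs{L(\zeta_0-z)}=\abs{\mu}\,\abs{L(v)}=(\delta/\norm{v})\abs{L(v)}$, which yields $k_\Omega(z;v)\ge\norm{v}/(2\delta)$.

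This is a classical estimate (cf.\ \cite[Theorem 4.1]{BP95}), so I do not anticipate a genuine obstacle; the points that call for a little care are realising the supporting real hyperplane as the zero set of the real part of a \emph{genuinely} $\Cb$-linear functional, checking $L(v)\ne0$, getting the factor $2$ right from the correct normalisation of the Poincar\'e metric on a half-plane, and dispatching the degenerate case $\delta_\Omega(z;v)=+\infty$.
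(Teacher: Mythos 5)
Your proof is correct and complete. The paper does not actually prove this lemma---it simply cites it as well known, referring to \cite[Theorem 4.1]{BP95}---and your argument (inscribed disk in the complex line for the upper bound, supporting real hyperplane realised as $\Real L$ for a $\Cb$-linear $L$ followed by comparison with the Poincar\'e metric of a half-plane for the lower bound) is exactly the standard proof that reference supplies, with the degenerate case $\delta_\Omega(z;v)=+\infty$ and the nonvanishing of $L(v)$ correctly handled.
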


\section{The space of convex domains}\label{sec:topology}  

The \emph{Hausdorff distance} between two bounded non-empty sets $A,B \subset \Cb^d$ is given by
\begin{align*}
d_H(A,B) = \max\left\{\sup_{a \in A}\inf_{b \in B} \norm{a-b}, \sup_{b \in B} \inf_{a \in A} \norm{a-b} \right\}.
\end{align*}

The sets $\Xb_d$ and $\Xb_{d,0}$ can be given a topology from the local Hausdorff semi-norms. For $R >0$ and a set $A \subset \Cb^d$, let $A^{(R)} := A \cap \Bb_d(0;R)$. Then define the \emph{local Hausdorff semi-norms} by
\begin{align*}
d_H^{(R)}(A,B) := d_H(A^{(R)}, B^{(R)}).
\end{align*}
We say a sequence $\Omega_n$ in $\Xb_d$ converges to $\Omega$ in $\Xb_d$ if  there exists some $R_0 \geq 0$ so that 
\begin{align*}
\lim_{n \rightarrow \infty} d_H^{(R)}(\Omega_n,\Omega) = 0
\end{align*}
for all $R \geq R_0$. Further, we say a sequence $(\Omega_n, z_n)$ in $\Xb_{d,0}$ converges to $(\Omega,z)$ in $\Xb_{d,0}$ if $\Omega_n$ converges to $\Omega$ in $\Xb_d$ and $z_n$ converges to $z$.

The action of the affine group $\Aff(\Cb^d)$ is obviously not transitive on $\Xb_{d,0}$, but the following result of S. Frankel shows that the quotient $\Aff(\Cb^d) \backslash \Xb_{d,0}$ is ``compact''.

\begin{theorem}[S. Frankel~\cite{Fra91}] There exists a compact set $K \subset \Xb_{d,0}$ such that $\Aff(\Cb^d) \cdot K = \Xb_{d,0}$. \end{theorem}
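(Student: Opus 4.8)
The statement asserts the existence of a compact set $K \subset \Xb_{d,0}$ with $\Aff(\Cb^d) \cdot K = \Xb_{d,0}$; equivalently, every pointed convex domain $(\Omega,x) \in \Xb_{d,0}$ can be moved by an affine map into a fixed compact family. The plan is to exhibit an explicit affine normalization using the John ellipsoid (maximal-volume inscribed ellipsoid). Given $(\Omega, x) \in \Xb_{d,0}$, note first that since $\Omega$ contains no complex affine line, and since a convex set containing no complex affine line in fact contains no real affine line (a real line through the interior, after applying $\pm i$ times its direction, would force a complex line — one must check this carefully using convexity, or simply invoke Theorem~\ref{thm:barth} which gives that $\Omega$ is Kobayashi hyperbolic, hence biholomorphic to a bounded domain, hence contains no real line), the domain $\Omega$ has no real affine line through $x$. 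However $\Omega$ may still be unbounded, so the John ellipsoid must be applied with care.

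The key step is the following normalization. Since $\Omega$ contains no affine line, for each $(\Omega,x)$ I would like to find $A \in \Aff(\Cb^d)$ (acting $\Rb$-linearly after translation, i.e.\ $A \in \GL_d(\Cb) \ltimes \Cb^d$) so that $A(\Omega,x)$ lies in a compact family. When $\Omega$ is bounded one uses the John ellipsoid $E$ of $\Omega$: there is a unique complex-affine map taking $E$ to $\Bb_d$ (using that $\GL_d(\Cb)$ acts transitively on ellipsoids only up to real-linear equivalence — so here one should instead use the Kobayashi ball or a $\Cb$-linear normalization). Cleaner is to work with the Kobayashi metric directly: by Theorem~\ref{thm:barth} and Lemma~\ref{lem:basic_est}, $k_\Omega(x;\cdot)$ is a norm on $\Cb^d$ (a $\Cb$-balanced convex body), so there is $A \in \GL_d(\Cb)$ with $A$ mapping the unit ball of $k_{\Omega}(x;\cdot)$ to a body trapped between $\Bb_d$ and $c(d)\Bb_d$ for a dimensional constant $c(d)$ (John's theorem gives $c(d) = \sqrt{d}$ for the real John ellipsoid, but the complex-linear version gives $c(d) = d$ or similar — the precise constant is irrelevant). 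After translating $x$ to $0$ and applying this $A$, Lemma~\ref{lem:basic_est} forces $\Bb_d(0; 1/(2c(d))) \subset \Omega' \subset$ (something controlled): the lower bound $k_{\Omega'}(0;v) \le c(d)\norm{v}$ together with $\frac{\norm v}{2\delta_{\Omega'}(0;v)} \le k_{\Omega'}(0;v)$ gives $\delta_{\Omega'}(0;v) \ge \frac{1}{2c(d)}\norm v$, so $\Bb_d(0; \tfrac{1}{2c(d)}) \subset \Omega'$; and the upper bound $k_{\Omega'}(0;v) \ge 1 \cdot \tfrac{\norm v}{\text{something}}$ combined with $k_{\Omega'}(0;v) \le \tfrac{\norm v}{\delta_{\Omega'}(0;v)}$ does \emph{not} bound $\Omega'$ from outside — and indeed we should not expect it to, since $\Omega'$ can be unbounded (e.g.\ a half-plane-type slab).

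This forces a reckoning with the main obstacle: \textbf{$\Xb_{d,0}$ contains unbounded domains, so the compact set $K$ must itself contain unbounded domains, and one must check that the normalized family is precompact in the local Hausdorff topology.} The resolution is that the local Hausdorff topology only ``sees'' $\Omega \cap \Bb_d(0;R)$ for each fixed $R$, so precompactness is a statement about uniform bounds on finite pieces. Concretely, I would normalize so that $0 \in \Omega'$ with $\Bb_d(0;r_0) \subset \Omega'$ for a fixed $r_0 = r_0(d) > 0$ (from the John/Kobayashi normalization above) — this handles the ``inner'' condition needed for the limit of any convergent subsequence to again be a domain containing $0$, i.e.\ to land in $\Xb_{d,0}$ rather than degenerating. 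For precompactness: the space of closed convex subsets of $\Cb^d \cong \Rb^{2d}$ containing a fixed ball $\Bb_d(0;r_0)$, equipped with the local Hausdorff topology, is compact — this is a standard Blaschke-selection-type argument (convex bodies localized to each $\Bb_d(0;R)$ form a compact family by Blaschke, and one diagonalizes over $R \in \Nb$). Then I must verify two things: (a) any local-Hausdorff limit $\Omega_\infty$ of normalized convex domains is open, convex, and still contains $\Bb_d(0;r_0)$, hence is a genuine convex domain with $0 \in \Omega_\infty$; and (b) $\Omega_\infty$ contains no complex affine line — this follows because containing a complex affine line is an ``unbounded-direction'' property that, by convexity plus $\Bb_d(0;r_0) \subset \Omega_\infty$, would force $\Omega_\infty$ to contain a full complex line through $0$, and one checks this passes to the limit from the (uniform, via the Kobayashi normalization) non-degeneracy $k_{\Omega'}(0;v) \ge \kappa(d) \norm v$ which survives the limit by lower semicontinuity of the Kobayashi metric under local Hausdorff convergence of convex domains (cf.\ the continuity results to be used in Section~\ref{sec:topology}). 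Hence $\Omega_\infty \in \Xb_d$ and $(\Omega_\infty, 0) \in \Xb_{d,0}$, so the closure $K$ of the normalized family is compact in $\Xb_{d,0}$ and satisfies $\Aff(\Cb^d)\cdot K = \Xb_{d,0}$ by construction. The one genuinely delicate point, which I would treat carefully, is the uniform lower bound on $k_{\Omega'}(0;\cdot)$ and its stability under limits — everything else is Blaschke selection plus bookkeeping.
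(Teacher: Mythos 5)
The paper itself offers no proof of this statement---it is quoted from Frankel, whose normalization proceeds by an iterated choice of closest boundary points and supporting hyperplanes (this is exactly the normalization underlying the set $\Kb_d$ of Proposition~\ref{prop:compact_set}). Your route---normalize the Kobayashi indicatrix at the base point by a complex John ellipsoid, then run a localized Blaschke selection---is a genuinely different and viable strategy. However, the step you yourself flag as the ``one genuinely delicate point'' is precisely where the argument as written has a gap. To show that a local Hausdorff limit $\Omega_\infty$ of the normalized domains contains no complex affine line, you appeal to ``lower semicontinuity of the Kobayashi metric under local Hausdorff convergence'' to push the uniform bound $k_{\Omega'}(0;v)\ge\kappa(d)\norm{v}$ to the limit. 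This is the wrong direction of semicontinuity (you need $k_{\Omega_\infty}(0;v)\ge\limsup_n k_{\Omega'_n}(0;v)$, which is the content of Lemma~\ref{2-lem}, not Lemma~\ref{3-lem}), and, more seriously, both halves of Theorem~\ref{conv-thm} are proved under the standing hypothesis that the limit already lies in $\Xb_d$ (properness/tautness of the limit is used in Lemma~\ref{2-lem}, and the existence of a finite boundary point $q\in(p+\Cb\cdot v)\cap\partial\Omega$ in Lemma~\ref{3-lem}); invoking that theorem to prove $\Omega_\infty\in\Xb_d$ is circular.

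The repair is elementary and already latent in your normalization: you only extracted the inner ball from the two-sided bound on $k_{\Omega'}(0;\cdot)$, but the other inequality is the useful one here. Since the indicatrix of $k_{\Omega'}(0;\cdot)$ is contained in $c(d)\Bb_d$, one has $k_{\Omega'}(0;v)\ge\norm{v}/c(d)$, and Lemma~\ref{lem:basic_est} then gives $\delta_{\Omega'}(0;v)\le\norm{v}/k_{\Omega'}(0;v)\le c(d)$ for every $v\ne 0$: every complex line through the origin meets $\partial\Omega'$ within the fixed ball $\overline{\Bb_d(0;c(d))}$. This outer directional bound survives local Hausdorff limits: for a fixed direction $v$, pick $\xi_n\in\partial\Omega'_n\cap\Cb\cdot v$ with $\norm{\xi_n}\le c(d)$; a subsequential limit $\xi\in\Cb\cdot v$ cannot lie in $\Omega_\infty$, by the argument of Lemma~\ref{lem:compact_inclusion} (whose proof uses only convexity of the $\Omega'_n$ and openness of the limit, not hyperbolicity). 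Hence $\Cb\cdot v\not\subset\Omega_\infty$ for every $v$, and the recession argument from the proof of Theorem~\ref{type-thm} (a convex open set containing a complex affine line in direction $b$ contains $z+\Cb\cdot b$ for every interior point $z$) rules out all complex affine lines, so $\Omega_\infty\in\Xb_d$. Two further points should be made explicit rather than assumed: the indicatrix is a bounded, balanced, convex body (convexity requires Lempert's identity $k_\Omega=c_\Omega$ on convex domains, or can be sidestepped by taking the John ellipsoid of its convex hull; boundedness requires the short compactness argument showing $\sup_{\norm{v}=1}\delta_\Omega(x;v)<\infty$ for $\Omega\in\Xb_d$), and the complex John ellipsoid of a balanced convex body exists with a dimensional distortion constant. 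With these insertions your proof is complete and independent of Frankel's.
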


As an immediate corollary we obtain the following. 

\begin{corollary} If $\Omega \in \Xb_d$ and $z_n \in \Omega$ is a sequence, then there exist $n_j \rightarrow \infty$ and $A_j \in \Aff(\Cb^d)$ such that $A_j(\Omega, z_{n_j})$ converges in $\Xb_{d,0}$. In particular, the set ${ \rm AL}(\Omega)$ is non-empty. 
\end{corollary}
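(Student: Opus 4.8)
The plan is to read this off directly from S.\ Frankel's theorem. First I would note that for every $n$ the marked domain $(\Omega, z_n)$ lies in $\Xb_{d,0}$, so Frankel's theorem supplies $B_n \in \Aff(\Cb^d)$ and $k_n \in K$ with $(\Omega, z_n) = B_n \cdot k_n$, where $K \subset \Xb_{d,0}$ is the fixed compact set. Setting $A_n := B_n^{-1} \in \Aff(\Cb^d)$, this says precisely that $A_n \cdot (\Omega, z_n) = k_n \in K$ for all $n$. The topology on $\Xb_{d,0}$ defined in Section~\ref{sec:topology} is metrizable — it is generated by the countably many local Hausdorff semi-norms $d_H^{(R)}$, $R \in \Nb$, together with the Euclidean distance between the marked points — so compactness of $K$ yields a subsequence $n_j \to \infty$ along which $A_{n_j} \cdot (\Omega, z_{n_j})$ converges in $\Xb_{d,0}$ (necessarily to a point of $K$). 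After relabeling $A_{n_j}$ as $A_j$, this is the first assertion.

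For the ``in particular'' statement I would apply the first part to a compactly divergent sequence, which always exists in a domain. Concretely, fix a compact exhaustion $K_1 \subset \interior K_2 \subset K_2 \subset \interior K_3 \subset \cdots$ of $\Omega$ with $\bigcup_m K_m = \Omega$; since a non-empty open subset of $\Cb^d$ is never compact, we may pick $w_m \in \Omega \setminus K_m$ for each $m$, and the resulting sequence is compactly divergent because every compact $K \subset \Omega$ is contained in some $K_N$ (the interiors $\interior K_m$ cover $K$), whence $w_m \notin K$ for $m \geq N$. Applying the first part to $\{w_m\}$ gives $n_j \to \infty$, affine maps $A_j \in \Aff(\Cb^d)$, and a limit $(\Omega_\infty, z_\infty)$ of $A_j \cdot (\Omega, w_{n_j})$ in $\Xb_{d,0}$; since a subsequence of a compactly divergent sequence is again compactly divergent, $\{w_{n_j}\}$ and $\{A_j\}$ exhibit $\Omega_\infty$ as an affine limit of $\Omega$, so ${\rm AL}(\Omega) \neq \emptyset$.

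There is no real obstacle here: essentially all the weight is carried by Frankel's theorem, and the only remaining points — that the local Hausdorff topology on $\Xb_{d,0}$ is metrizable (so that ``compact'' delivers convergent subsequences), that every domain admits a compactly divergent sequence, and that this property passes to subsequences — are routine and each takes at most a line.
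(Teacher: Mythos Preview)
Your proposal is correct and follows exactly the approach the paper intends: the paper states the corollary immediately after Frankel's theorem with the remark ``As an immediate corollary we obtain the following'' and gives no further argument, so your derivation---pulling each $(\Omega,z_n)$ back into the compact set $K$ via an affine map and extracting a convergent subsequence, then applying this to a compactly divergent sequence to exhibit an element of ${\rm AL}(\Omega)$---is precisely the intended (and only natural) proof. Your added remarks on metrizability and on the existence of compactly divergent sequences are the routine justifications the paper leaves implicit.
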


The next result shows the stability of $k_{\Omega}$  and $K_{\Omega}$ when using this notion of convergence of domains.

\begin{theorem}\label{conv-thm} If $\Omega_n$ converges to $\Omega$ in $\Xb_d$, then 
\begin{align*}
\lim_{n \rightarrow \infty} k_{\Omega_n} = k_\Omega \text{ and } \lim_{n \rightarrow \infty} K_{\Omega_n} = K_\Omega,
\end{align*}
locally uniformly on compact sets. 
\end{theorem}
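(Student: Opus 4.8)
The plan is to prove the two convergences separately, deriving the infinitesimal statement first and then integrating to get the distance statement. For the infinitesimal Kobayashi metric $k_{\Omega_n} \to k_\Omega$, I would prove the upper and lower semicontinuity bounds separately. For the upper bound, fix a compact set $L \subset \Omega$ and a compact set of directions; given $(z,v)$ with $z$ in (a slight enlargement of) $L$, choose a holomorphic disk $\varphi : \Db \to \Omega$ with $\varphi(0) = z$, $\varphi'(0) = \lambda v$ nearly realizing $k_\Omega(z;v)$. Since $\varphi(r\Db)$ is a compact subset of $\Omega$ for $r<1$ and $\Omega_n \to \Omega$ in the local Hausdorff sense, for $n$ large the rescaled disk $z \mapsto \varphi(rz)$ maps into $\Omega_n$; this gives $k_{\Omega_n}(z;v) \le k_\Omega(z;v) + o(1)$ uniformly. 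For the lower bound I would use convexity crucially: by Lemma~\ref{lem:basic_est}, $k_{\Omega_n}(z;v) \ge \norm{v}/(2\delta_{\Omega_n}(z;v))$, and since $\Omega_n \to \Omega$ locally in Hausdorff distance and all domains are convex, $\delta_{\Omega_n}(z;v) \to \delta_\Omega(z;v)$ locally uniformly — this already prevents $k_{\Omega_n}$ from blowing down to zero and, more carefully, one can combine it with a normal families argument: any subsequential limit of the (locally bounded, by the upper estimate) metrics $k_{\Omega_n}$ must agree with $k_\Omega$ because competitor disks for $\Omega_n$ converge (Montel) to disks into $\overline{\Omega}$, and convexity plus $\Omega_n \to \Omega$ forces the limit disk into $\Omega$ unless it is constant.

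For the distance statement, I would integrate: $K_{\Omega_n}(p,q) = \inf_\gamma \int_0^1 k_{\Omega_n}(\gamma(t);\gamma'(t))\,dt$. Upper semicontinuity of $\limsup K_{\Omega_n}(p,q) \le K_\Omega(p,q)$ follows by taking a near-geodesic for $\Omega$ joining $p$ to $q$ (which, by Theorem~\ref{thm:barth}, exists and stays in a compact subset of $\Omega$ since $\Omega \in \Xb_d$), and applying the locally uniform convergence $k_{\Omega_n} \to k_\Omega$ on the compact set swept out by that path. For the reverse inequality, I would take $\Omega$-geodesics (or near-geodesics) $\gamma_n$ for $K_{\Omega_n}$; using the lower bound from Lemma~\ref{lem:basic_est} together with the fact that for $R$ large all the $\Omega_n$ agree with $\Omega$ near a fixed compact set, one shows the $\gamma_n$ cannot escape to the boundary of a large ball and in fact remain in a fixed compact subset of $\Omega$ (here convexity and properness from Theorem~\ref{thm:barth}, uniform in $n$, are essential); passing to a uniform limit $\gamma_n \to \gamma$ and using Fatou/lower semicontinuity of the length functional under $k_{\Omega_n} \to k_\Omega$ gives $\liminf K_{\Omega_n}(p,q) \ge \int k_\Omega(\gamma;\gamma') \ge K_\Omega(p,q)$. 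Local uniformity in $(p,q)$ comes from the fact that all estimates above are uniform for $p,q$ in a compact subset of $\Omega$.

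The main obstacle I expect is the lower bound / non-degeneracy part: ruling out the possibility that the competitor disks for $\Omega_n$ develop more room in the limit than $\Omega$ actually has, i.e.\ that $k_{\Omega_n}$ drops below $k_\Omega$. This is precisely where convexity is indispensable — without it, $\Omega_n \to \Omega$ in the local Hausdorff sense does not control $k_{\Omega_n}$ from below (the $\Omega_n$ could be much ``fatter'' in directions not visible in a fixed ball), but for convex domains the one-dimensional slice quantity $\delta_{\Omega_n}(z;v)$ is determined by the local shape of $\partial\Omega_n$ and converges, and the Lemma~\ref{lem:basic_est} sandwich then pins $k_{\Omega_n}$ down. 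The second delicate point is showing the near-geodesics $\gamma_n$ stay in a common compact subset of $\Omega$; I would handle this by noting that $\partial\Omega_n$ and $\partial\Omega$ are uniformly close on large balls, so a uniform lower bound on $\delta_{\Omega_n}$ along any path of bounded $K_{\Omega_n}$-length follows from the standard estimate $K_{\Omega_n}(z_0, z) \ge \frac{1}{2}\log\frac{\delta_{\Omega_n}(z_0)}{\delta_{\Omega_n}(z)}$ valid for convex (indeed all) domains.
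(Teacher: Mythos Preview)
Your overall architecture matches the paper's: prove $k_{\Omega_n}\to k_\Omega$ first (upper bound by shrinking a competitor disk for $\Omega$ so its image lies in a compact set, hence eventually in $\Omega_n$; lower bound by a Montel argument on competitor disks $f_n:\Db\to\Omega_n$), and then deduce the distance statement. The upper bound is exactly what the paper does.

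The gap is in the lower bound. You assert that the competitor disks $f_n:\Db\to\Omega_n$ converge by Montel, but Montel requires the family $\{f_n\}$ to be locally bounded as maps into $\Cb^d$, and this is \emph{not} automatic: the $\Omega_n$ may be unbounded, so a priori $\norm{f_n(\zeta)}$ could go to infinity even for $\zeta$ in a compact subset of $\Db$. Your proposed remedy---convergence of $\delta_{\Omega_n}(z;v)$ together with the Lemma~\ref{lem:basic_est} sandwich---does not close this gap. First, the sandwich $\tfrac{1}{2\delta}\le k\le \tfrac{1}{\delta}$ only pins $k_{\Omega_n}$ down to within a factor of $2$, not to $k_\Omega$ exactly. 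Second, and more importantly, the convergence $\delta_{\Omega_n}(z;v)\to\delta_\Omega(z;v)$ holds only for $z$ in a fixed compact subset of $\Omega$; once $f_n(\zeta)$ leaves that compact set you have no control, and a uniform bound on $\delta_{\Omega_n}(p_n;v)$ over all directions $v$ does \emph{not} force $\Omega_n$ to be bounded (the slice $\Omega_n\cap(p_n+\Cb v)$ is a planar convex domain whose inradius at $p_n$ can be small while its diameter is large).

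The paper resolves this with a different use of convexity: assuming $\norm{f_{n_j}(y_j)}\to\infty$ along some sequence $y_j$ in a compact, it normalizes $v_j=f_{n_j}(y_j)/\norm{f_{n_j}(y_j)}\to v$, picks $q\in(p+\Cb v)\cap\partial\Omega$, finds nearby $q_j\in\partial\Omega_{n_j}$, and takes supporting complex hyperplanes $\ell_j$ with $\ell_j(q_j)=0$, $\ell_j(p)=i$. Since $\ell_j$ maps $\Omega_{n_j}$ into the upper half-plane $\Hc$, the Kobayashi contraction gives $K_{\Hc}(\ell_j(f_{n_j}(y_j)),i)\le K_{\Omega_{n_j}}(f_{n_j}(y_j),p_{n_j})+O(1)\le K_{\Db}(y_j,0)+O(1)$, hence $\abs{\ell_j(f_{n_j}(y_j))}$ is bounded. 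Writing $\ell_j(w)=a_j+\ip{b_j,w}$ and expanding forces $\ip{b,v}=0$, which contradicts $\ell(q)=0\ne i=\ell(p)$ since $q\in p+\Cb v$. This projection-to-a-half-plane trick is the missing ingredient in your sketch; once you have it, your Montel argument goes through exactly as you say.

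For the distance $K_{\Omega_n}\to K_\Omega$, the paper simply asserts it follows from the infinitesimal convergence; your integration argument is more explicit. Your worry about near-geodesics $\gamma_n$ escaping is legitimate, and the estimate $K_{\Omega_n}(z_0,z)\ge\tfrac12\log(\delta_{\Omega_n}(z_0)/\delta_{\Omega_n}(z))$ alone does not prevent $\norm{z}$ from being large---it only controls $\delta_{\Omega_n}(z)$. The same supporting-hyperplane device as above (project via $\ell_j$ onto $\Hc$ and use properness of $K_{\Hc}$) is what actually traps the geodesics in a bounded region.
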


Theorem~\ref{conv-thm} is well known, see for instance the proof of ~\cite[Theorem 4.1]{Zim16}, but we provide a complete proof in Appendix~\ref{app:pf_of_conv-thm}. 

We will also need the following explicit compact set in $\Xb_d$. Define 
\begin{align*}
\Db_1 := \{ z \in \Cb : \abs{ { \rm Re}(z)} +  \abs{ { \rm Im}(z)} < 1 \}.
\end{align*}
Let $e_1, \dots, e_d$ be the standard basis of $\Cb^d$. Then let 
\begin{align*}
Z_1 := \Span_{\Cb}\{e_2, \dots, e_d\}
\end{align*}
and for  $2 \leq j \leq d$ consider the complex $(d-j)$-dimensional affine plane
\begin{align*}
Z_j :=  ie_1+e_j + \Span_{\Cb}\{e_{j+1}, \dots, e_d\}.
\end{align*}

\begin{proposition}\label{prop:compact_set} Let $\Kb_d$ denote the set of all convex domains $\Omega \in \Xb_d$ where 
\begin{enumerate}
\item $Z_i \cap \Omega =\emptyset$ and
\item $ie_1+ \Db_1 \cdot e_i \subset \Omega$
\end{enumerate}
for all $i=1,\dots, d$. Then $\Kb_d$ is a compact subset of $\Xb_d$. 
\end{proposition}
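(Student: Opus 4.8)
The plan is to show that $\Kb_d$ is a closed and (sequentially) compact subset of $\Xb_d$ by a normal-families argument for convex domains. First I would observe that the defining conditions (1) and (2) of $\Kb_d$ together trap any $\Omega \in \Kb_d$ between fixed bounds: condition (2) forces $\Omega$ to contain the fixed compact-looking ``cross'' $\bigcup_i (ie_1 + \Db_1 \cdot e_i)$ (in particular a fixed small ball around some interior point, after noting the $d$ segments span enough directions near $ie_1 + \tfrac12(e_1+\cdots)$ — more carefully, convexity of $\Omega$ and containment of these $d$ real/complex discs forces $\Omega$ to contain an honest open neighborhood of a fixed point), while condition (1), convexity, and the fact that $\Omega$ is a connected open set disjoint from each affine plane $Z_i$ forces $\Omega$ to lie in one of the two open half-spaces cut out by the real hyperplane supporting $Z_i$ on the side of the prescribed disc $ie_1 + \Db_1\cdot e_i$. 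Since $Z_1 = \Span_\Cb\{e_2,\dots,e_d\}$ is a complex hyperplane, "disjoint from $Z_1$ and convex and containing $ie_1$" pins down which side of $\{z_1 = 0\}$ (as a real codimension-2 condition one must be slightly careful: being disjoint from a complex hyperplane does not by itself put a convex set in a half-space, but combined with the other $Z_j$'s and the containments it does). I would make this precise to conclude that there is a fixed compact set $L_0 \subset \Cb^d$ and a fixed open ball $B_0$ with $B_0 \subset \Omega \subset L_0$ for every $\Omega \in \Kb_d$, after a uniform bound on the diameter is extracted (this uniform boundedness is where conditions (1) and (2) genuinely interact — I'd expect this to be the main obstacle, see below).

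Given uniform containment $B_0 \subset \Omega \subset L_0$, sequential compactness is routine via Blaschke's selection theorem: if $\Omega_n \in \Kb_d$, pass to a subsequence so that the closures $\overline{\Omega_n}$ converge in the Hausdorff distance to some compact convex set $C$ with $\overline{B_0} \subset C \subset L_0$; set $\Omega := \interior(C)$, which is a nonempty bounded convex domain. One then checks $d_H^{(R)}(\Omega_n, \Omega) \to 0$ for all large $R$ (for bounded convex sets, Hausdorff convergence of closures is equivalent to the local-Hausdorff convergence used to define the topology on $\Xb_d$, since everything sits inside $\Bb_d(0;R)$ once $R$ is large enough).

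Next I would verify that the limit $\Omega$ again lies in $\Kb_d$, i.e. that conditions (1) and (2) pass to the limit. Condition (2), $ie_1 + \Db_1 \cdot e_i \subset \Omega_n$, is an open containment of a fixed set, and since this set has closure contained in each $\Omega_n$ with a uniform margin (one can enlarge $\Db_1$ slightly and still stay inside, by convexity and the interior-point argument above — or simply note Hausdorff convergence of the complements), the containment survives: $ie_1 + \Db_1\cdot e_i \subset \interior(C) = \Omega$. Condition (1), $Z_i \cap \Omega_n = \emptyset$, passes to the limit because the complement $\Cb^d \setminus \Omega_n$ is closed and $Z_i$ is contained in it; taking Hausdorff limits, $Z_i \subset \Cb^d \setminus \Omega$ on bounded pieces, and since $Z_i$ and $\Omega$ are both unions of their bounded pieces, $Z_i \cap \Omega = \emptyset$. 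Finally $\Omega$ does not contain a complex affine line because it is bounded, so $\Omega \in \Xb_d$; hence $\Omega \in \Kb_d$, and $\Kb_d$ is closed, proving compactness.

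\textbf{The main obstacle.} The delicate point is the uniform two-sided bound $B_0 \subset \Omega \subset L_0$, specifically the upper bound (uniform boundedness of $\Kb_d$). Disjointness from a single real-codimension-$2$ plane $Z_i$ does not confine a convex set to a half-space, so one must use all of $Z_1,\dots,Z_d$ together with the mandatory discs. I expect the cleanest route is: by convexity and $ie_1 + \Db_1\cdot e_i \subset \Omega$ for every $i$, $\Omega$ contains the convex hull $P$ of $\{ie_1 + \Db_1\cdot e_i : 1 \le i \le d\}$, which is a fixed open neighborhood of, say, the point $ie_1$ (here one checks that $ie_1$ is an interior point of this hull — it is the common ``center'' of the $d$ discs and the discs point in the $d$ independent directions $e_1,\dots,e_d$, wait: $\Db_1 \cdot e_1$ is a disc in the $e_1$-direction through $0$, not through $ie_1$; so $ie_1 + \Db_1 \cdot e_1$ is a disc in the complex line $ie_1 + \Cb e_1$, and the $d$ discs $ie_1 + \Db_1 \cdot e_i$ all contain $ie_1$ and span the directions $\pm e_1, \pm ie_1, \dots, \pm e_d, \pm ie_d$ at $ie_1$, so their convex hull is indeed a neighborhood of $ie_1$). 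For the upper bound, note that if some sequence $\Omega_n \in \Kb_d$ had points $w_n$ with $\norm{w_n} \to \infty$, then by convexity the segment from a fixed interior point through $w_n$ lies in $\Omega_n$, and a compactness/limiting argument would produce a convex domain $\Omega_\infty$ disjoint from every $Z_i$, containing each $ie_1 + \Db_1\cdot e_i$, and containing a ray (hence an affine line, being convex and containing points going to infinity in a fixed asymptotic direction only if...). This is not quite an affine line, so instead I would argue directly: a bounded-diameter bound follows by showing $\Omega$ must lie in the ``box'' determined by the real supporting hyperplanes through each $Z_i$ — since $Z_i$ is an affine subspace disjoint from the convex open set $\Omega$, there is a real hyperplane $H_i \supset Z_i$ with $\Omega$ on one side (separation of a convex open set from a disjoint affine subspace); as $i$ ranges over $1,\dots,d$ the hyperplanes $H_1,\dots,H_d$ have a fixed complex span configuration (each $H_i \supset Z_i$ where the $Z_i$ together with the base points $ie_1 + e_j$ cut out bounded slices), and intersecting the corresponding half-spaces with the condition that the prescribed discs lie inside forces a uniform bound. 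Making the geometry of this intersection of half-spaces explicit, and checking it is bounded uniformly over all valid choices of the $H_i$, is the technical heart of the argument; once it is in place, the rest is Blaschke selection plus routine limit-passing as above.
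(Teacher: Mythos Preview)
Your proposal has a genuine gap: domains in $\Kb_d$ are \emph{not} uniformly bounded, so the reduction to Blaschke selection on a fixed compact $L_0$ cannot work. For a concrete counterexample in $\Cb^2$, take
\[
\Omega = \{(z_1,z_2)\in\Cb^2 : \Imaginary(z_1)>0,\ |z_2|<1\}.
\]
This is convex, contains no complex affine line, is disjoint from $Z_1=\{z_1=0\}$ and from the point $Z_2=ie_1+e_2=(i,1)$, and contains both discs $ie_1+\Db_1\cdot e_1$ and $ie_1+\Db_1\cdot e_2$; hence $\Omega\in\Kb_2$, but $\Omega$ is unbounded. (In fact the paper itself uses $\Kb_2$ precisely to produce unbounded limits such as $\Omega_1\subset\Hc\times D_2$ in the proof of Theorem~\ref{thm:construcing_embedded_products_special_case}.) The step in your ``main obstacle'' paragraph where you separate $\Omega$ from each $Z_i$ by a real hyperplane and then intersect half-spaces does not yield a bounded region: for $i\ge 2$ the plane $Z_i$ has complex codimension $i\ge 2$, so the separating real hyperplane $H_i\supset Z_i$ is highly non-unique and these half-spaces need not intersect in anything bounded.

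What you actually need is compactness in the \emph{local} Hausdorff topology. The existence of a subsequential limit set comes from applying Blaschke selection to the truncations $\Omega_n\cap\Bb_d(0;R)$ for each $R$ and diagonalizing; condition~(2) (via the convex hull of the $d$ discs through $ie_1$, as you correctly noted) guarantees the limit has nonempty interior. The substantive issue is then showing the limit lies in $\Xb_d$, i.e.\ contains no complex affine line. This is exactly what the disjointness conditions $Z_i\cap\Omega=\emptyset$ are engineered to do: $Z_1$ is a complex hyperplane, so disjointness from $Z_1$ survives in the limit and traps the $z_1$-coordinate; then inductively the $Z_j$ (together with the prescribed discs fixing which ``side'' $\Omega$ is on) control the remaining coordinates enough to exclude complex lines. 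Your argument that ``$\Omega$ is bounded hence contains no line'' short-circuits precisely the step that carries the real content. The paper itself does not spell this out either, simply citing \cite[Theorem~2.5]{Zim17}; if you want a self-contained proof you should look there for the inductive mechanism.
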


\begin{proof} This is essentially the proof of Theorem 2.5 in~\cite{Zim17}.
\end{proof}

\section{Proof of Theorem~\ref{thm:construcing_embedded_products}}

A subset $A \subset \Cb^d$ is called a \emph{non-trivial affine disk} if there exists a non-constant affine map $\ell : \Cb \rightarrow \Cb^d$ such that $\ell(\Db) = A$. The key step in the proof of Theorem~\ref{thm:construcing_embedded_products} is establishing the following. 

\begin{theorem}\label{thm:construcing_embedded_products_special_case} Suppose that $\Omega \in \Xb_2$. If $\partial \Omega$ contains a non-trivial affine disk, then there exists some $\Omega_\infty \in { \rm AL}(\Omega)$ which is biholomorphic to $\Db \times \Db$. 
\end{theorem}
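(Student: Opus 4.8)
The goal is to show that if $\Omega \in \Xb_2$ and $\partial\Omega$ contains a non-trivial affine disk, then some affine limit of $\Omega$ is biholomorphic to $\Db \times \Db$. First I would normalize: after an affine change of coordinates, assume the affine disk is the disk $D := \{(z,0) : |z| < 1\}$ sitting in $\partial\Omega$, and that $\Omega$ lies on one side, say locally near $(0,0)$ the domain satisfies $\Imaginary(w) > 0$ (up to reparametrization), where we write coordinates $(z,w) \in \Cb^2$. Since $\Omega$ is convex and $D \subset \partial\Omega$, convexity forces the whole complex line $\{w = 0\}$ to be disjoint from $\Omega$ (a convex domain cannot have a boundary segment of a line with the line re-entering the domain), and in fact the supporting real hyperplane at each point of $D$ can be taken to be $\{\Imaginary(w) = 0\}$; so $\Omega \subset \{\Imaginary(w) > 0\}$ near $D$, and globally $D$ is a complex-analytic disk in $\partial\Omega$ with a common complex supporting hyperplane $\{w = 0\}$.

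The heart of the argument is to rescale at the center of the disk. I would choose base points $z_n = (0, i\epsilon_n)$ with $\epsilon_n \to 0^+$, which are compactly divergent in $\Omega$ since they approach the boundary point $(0,0) \in D$. Then apply affine maps $A_n$ that fix the $z$-coordinate (the "good" directions tangent to the disk should not need rescaling because the disk has uniform size) and dilate the $w$-coordinate: $A_n(z,w) = (z, w/\epsilon_n)$, possibly composed with a small shear $w \mapsto w - (\text{linear in } z)$ to keep things centered. Under $A_n$, the base point $z_n$ maps to $(0, i)$, and I claim $A_n(\Omega)$ converges in $\Xb_2$ to a convex domain $\Omega_\infty$. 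By construction $\Omega_\infty$ contains $(0,i)$, is contained in $\{\Imaginary(w) > 0\}$, and — because the affine disk $D$ of radius $1$ in the $z$-direction is fixed by $A_n$ and lies in $\partial\Omega$ for all $n$ — the limit domain contains, in its boundary, the disk $D$ at "level $w = 0$", while the fibers over $z \in \Db$ are half-planes in the $w$-variable. More precisely, I expect to show $\Omega_\infty = \{(z,w) : z \in \Db,\ \Imaginary(w) > 0\} \cong \Db \times \Hb \cong \Db \times \Db$. The inclusion "$\subseteq$" uses that $\Omega \subset \{\Imaginary w > 0\}$ together with $\Omega$ avoiding the complex lines $\{z = \zeta\}$ for $|\zeta| \geq 1$ (these lines lie in $\partial\Omega$ or outside $\Omega$ by convexity, since $D$ is maximal: any larger affine disk in $\partial\Omega$ would still be fine, but the boundary of $\Db$ is where $D$ ends), so after rescaling they remain in the complement. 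The inclusion "$\supseteq$" uses the uniform interior estimate: for $|z| < 1$ fixed, the point $(z, i\epsilon_n t)$ is in $\Omega$ for all small $\epsilon_n$ when $t$ ranges over a compact subset of the upper half-plane, because $D$ being a disk (not just a segment) gives a "cone of entry" into $\Omega$ of uniform opening along the whole disk — this is where I would use Lemma~\ref{lem:basic_est} on the Kobayashi metric, or a direct convexity argument producing a small bidisk inside $\Omega$ anchored along $D$.

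**Main obstacle.** The delicate point is controlling the $z$-direction in the limit: a priori the rescaling $A_n$ that flattens the $w$-direction could also shrink or distort the domain in the $z$-direction, so that $\Omega_\infty$ might only contain a smaller disk, or might be a half-plane times something degenerate, or might fail to be in $\Xb_2$ (could contain a line). I need a genuinely two-sided estimate: (i) an \emph{upper} bound, that $A_n(\Omega)$ does not grow in the $z$-direction — this follows from $\Omega$ avoiding the vertical lines over $\partial\Db$ and convexity, which is scale-invariant, so it passes to the limit cleanly; and (ii) a \emph{lower} bound, that $A_n(\Omega)$ does not collapse in the $z$-direction — this requires that $\Omega$ contains, near each point of the disk $D$, a region shaped like $\{|z - z_0| < \rho,\ 0 < \Imaginary w < c\}$ with $\rho, c$ bounded below uniformly in $z_0 \in D$ away from $\partial\Db$; here I would invoke compactness of $D$ inside $\Db$ plus convexity of $\Omega$ to extract such a uniform "box," then note $A_n$ of this box converges to $\{|z-z_0| < \rho,\ \Imaginary w > 0\}$-type sets whose union over $z_0$ recovers $\Db \times \Hb$. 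Once the Hausdorff limit $\Omega_\infty$ is identified as $\Db \times \Hb$, convexity is automatic and $\Omega_\infty \in \Xb_2$ (it contains no complex line since $\Db$ and $\Hb$ are each hyperbolic), so $\Omega_\infty \in {\rm AL}(\Omega)$ and $\Omega_\infty \cong \Db \times \Db$, completing the proof.
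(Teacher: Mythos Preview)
Your normalization and choice of base points are essentially the same as the paper's, but the argument breaks down at the identification of the limit. There are two concrete errors, and together they hide the fact that a \emph{single} rescaling is not enough.

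First, your upper bound reasoning is wrong. You assert that ``$\Omega$ avoids the complex lines $\{z = \zeta\}$ for $|\zeta| \geq 1$,'' but nothing in the hypotheses forces this: the disk $D = \Db \times \{0\}$ sitting in $\partial\Omega$ does not prevent $\Omega$ from containing, say, $(2,i)$. What one \emph{can} show is that if $(z,w) \in \Omega_\infty$ then $(z,\epsilon_n w) \in \Omega$ for large $n$, so $(z,0) \in \overline\Omega \cap \{w=0\}$; this gives $\Omega_\infty \subset C \times \Hb$, where $C$ is the \emph{full} convex boundary piece $\{z : (z,0)\in\partial\Omega\}$, which contains $\Db$ but may be strictly larger. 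Second, your lower bound is too optimistic in the $w$-direction. The slice $\Omega \cap \{z = 0\}$ is some convex set touching $0$, and after dilation by $1/\epsilon_n$ it converges to its \emph{tangent cone} $D_1$ at $0$, which is a convex cone contained in $\Hb$ but in general not all of $\Hb$ (take for instance $\Omega$ with $\Omega\cap\{z=0\} = \{w : \text{Im}(w) > |\text{Re}(w)|\}$). Your ``box'' argument does not produce the full half-plane.

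The upshot is that after your rescaling one only gets a domain $\Omega_1$ sandwiched as $D_1 \times C \subset \Omega_1 \subset \Hb \times C$, and there is no reason for $\Omega_1$ itself to split as a product: the normal-direction slice over each $z\in C$ is some cone $K_z$ with $D_1 \subset K_z \subset \Hb$, and these cones can genuinely depend on $z$ while keeping $\Omega_1$ convex. This is exactly where the paper's proof diverges from yours. The paper performs a \emph{second} rescaling, dilating once more in the normal direction by $B_n = \mathrm{diag}(n,1)$. Since both $D_1$ and $\Hb$ are cones, the sandwich is preserved under $B_n$, and a short convexity argument (using that the slice over the center $z=0$ is already the minimal cone $D_1$) shows that the limit $\Omega_2$ of $B_n\Omega_1$ collapses to the honest product $D_1 \times C$. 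Only then does the Riemann mapping theorem give $\Omega_2 \cong \Db \times \Db$. So the missing idea in your plan is this second blow-up and the accompanying argument that forces all the fiber cones to coincide.
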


\begin{proof} Without loss of generality we can assume that 
\begin{enumerate}
\item $\Omega \subset \{ (z_1,z_2) \in \Cb^2 : { \rm Im}(z_1) > 0\}$,
\item $\{0\} \times \Db \subset \partial \Omega$, and
\item $(i,0) \in \Omega$.
\end{enumerate}

For every $n$, let $z_n := (i/n,0) \in \Omega$. Then pick
\begin{align*}
\xi_n \in (\{i/n\} \times \Cb) \cap  \partial \Omega 
\end{align*}
such that 
\begin{align*}
\norm{\xi_n-z_n} = \inf\left\{ \norm{\xi-z_n} : \xi \in  (\{i/n\} \times \Cb) \cap  \partial \Omega \right\}.
\end{align*}
Since $\overline{\Omega}$ does not contain any complex affine lines, we must have
\begin{align*}
\limsup_{n \rightarrow \infty} \norm{\xi_n-z_n} < +\infty.
\end{align*}
Suppose $\xi_n = ( i/n, a_n)$. By passing to a subsequence we can suppose that $a_n \rightarrow a$. Then 
\begin{align*}
\lim_{n \rightarrow \infty} \norm{\xi_n-z_n}=\lim_{n \rightarrow \infty} \abs{a_n} = \abs{a}
\end{align*}
and $(0,a) \in \partial \Omega$. Since $\{0\} \times \Db \subset \partial \Omega$ and $\Omega$ is convex, we also have $\abs{a} \geq 1$. 

Then consider the matrix
\begin{align*}
A_n := \begin{pmatrix} n & 0 \\ 0 & a_n^{-1} \end{pmatrix}.
\end{align*}
By construction, $A_n(\Omega, z_n) \in \Kb_2$ where $\Kb_2 \subset \Xb_2$ is the subset from Proposition~\ref{prop:compact_set}. So by passing to a subsequence we can assume that $A_n\Omega$ converges to some $\Omega_1$ in $\Xb_2$ (in fact in $\Kb_2$). 

Let $C_2 \subset \Cb$ be the open convex set such that 
\begin{align*}
\{0\} \times \overline{C_2}  = (\{0\} \times \Cb) \cap \partial \Omega.
\end{align*}
Then define $D_2 := a^{-1} \cdot C_2$. Also let $\Hc := \{ z \in \Cb : { \rm Im}(z) >0\}$. \\

\noindent \textbf{Claim:} $\{0\} \times D_2 \subset  \partial \Omega_1$ and $\Omega_1 \subset \Hc \times D_2$. 

\begin{proof}[Proof of Claim:] 
If $(x,y) \in \Omega_1$, then there exists $(x_n, y_n) \in \Omega$ such that $A_n(x_n,y_n) \rightarrow (x,y)$. Thus $nx_n \rightarrow x$ and $y_n/a_n \rightarrow y$. So $x_n \rightarrow 0$ and $y_n \rightarrow ay$. Thus $y \in  a^{-1} \cdot C_2$. So  $\Omega_1 \subset \Cb \times D_2$. Since $\Omega \subset \Hc \times \Cb$ we also have $\Omega_1 \subset \Hc \times \Cb$. So 
\begin{align*}
\Omega_1 \subset (\Cb \times D_2) \cap (\Hc \times \Cb) = \Hc \times D_2.
\end{align*}

Since $(a_n^{-1}\cdot C_2) \times \{ 0 \} \subset A_n \partial \Omega$ and $a_n^{-1} \rightarrow a^{-1}$, the definition of the local Hausdorff topology implies that $\{0\} \times D_2 \subset \overline{\Omega}_1$. Since $\Omega_1 \subset \Hc \times D_2$, we must have $\{0\} \times D_2 \subset  \partial \Omega_1$.
\end{proof}

Let $C_1 \subset \Cb$ be the open convex set such that 
\begin{align*}
C_1 \times \{0\} = (\Cb \times \{ 0\}) \cap \Omega.
\end{align*}
Next define $D_1 := \cup_{n =1}^\infty n \cdot C_1$. Then $D_1$ is a non-empty convex open cone since $0 \in \partial C_1$. \\

\noindent \textbf{Claim:} $D_1 \times D_2 \subset \Omega_1$. 

\begin{proof}[Proof of Claim:]
By construction 
\begin{align*}
 (nC_1) \times \{0\} \subset A_n \Omega
\end{align*}
so, by the definition of the local Hausdorff topology, $D_1 \times \{0\} \subset \overline{\Omega}_1$. Now suppose that $(x,y) \in D_1 \times D_2$. Since $D_1$ is a cone, $(nx,0) \in \overline{\Omega}_1$ for all $n$. Further, the previous claim implies that $(0,y) \in \overline{\Omega}_1$. Thus by convexity 
\begin{align*}
(x,y) = \lim_{n \rightarrow \infty} \frac{1}{n}(nx,0) + \frac{n-1}{n}(0,y) \in \overline{\Omega}_1.
\end{align*}
Thus $D_1 \times D_2 \subset \overline{\Omega}_1$. Since $\Omega_1$ is convex and open in $\Cb^2$, we have $D_1 \times D_2 \subset \Omega_1$. 
\end{proof}

Next consider the matrices
\begin{align*}
B_n := \begin{pmatrix} n & 0 \\ 0 & 1 \end{pmatrix}.
\end{align*}
Then since $D_1$ and $\Hc$ are cones we have
\begin{align*}
D_1 \times D_2 \subset B_n\Omega_1 \subset \Hc \times D_2. 
\end{align*}
So by passing to a subsequence we can assume that $B_n\Omega_1$ converges to some $\Omega_2$ in $\Xb_2$. \\

\noindent \textbf{Claim:} $\Omega_2 = D_1 \times D_2$ and hence, by the Riemann mapping theorem, $\Omega_2$ is biholomorphic to $\Db \times \Db$. 

\begin{proof}[Proof of Claim:]
Notice that $D_1 \times D_2 \subset \Omega_2$ since $D_1 \times D_2 \subset B_n\Omega_1$ for any $n$. 

For every $z \in D_2$ let $S_{z} \subset \Cb$ be the convex open set such that 
\begin{align*}
S_{z} \times \{z\}= (\Cb \times\{z\} ) \cap \Omega_1.
\end{align*}
Then define $\Cc_z := \cup_{n \in \Nb} n \cdot S_{z}$.  Then $\Cc_z$ is a convex open cone since $0 \in \partial S_z$. Further
\begin{align*}
\Cc_z \times \{z\}= (\Cb \times\{z\} ) \cap \Omega_2.
\end{align*}
Since $D_1 \times D_2 \subset \Omega_2$ we see that $\overline{D}_1 \subset \overline{\Cc}_z$. Suppose, for a contradiction, that $\overline{D}_1 \neq \overline{\Cc}_z$ for some $z \in D_2$. Then there exists some $w \in \Cc_z \setminus D_1$. Then $(tw,z) \in \Omega_2$ for all $t > 0$. Then by convexity
\begin{align*}
(w,0) = \lim_{n \rightarrow \infty} \frac{1}{n}(nw, z) + \frac{n-1}{n}(0,0) \in \overline{\Omega}_2.
\end{align*}
So $w \in \overline{D}_1$. So we have a contradiction. Thus $\Cc_z = D_1$ for all $z \in D_2$ and hence $\Omega_2 = D_1 \times D_2$. 
\end{proof}

Finally, since ${ \rm AL}(\Omega_1) \subset { \rm AL}(\Omega)$ we see that $\Omega_2$ is in ${ \rm AL}(\Omega)$.

\end{proof}

Next we recall a number of results which allow us to reduce Theorem~\ref{thm:construcing_embedded_products} to Theorem~\ref{thm:construcing_embedded_products_special_case}. First, a result of S. Frankel allows us to reduce to the case where $d=2$. 

\begin{theorem}\cite[Theorem 9.3]{Fra91}\label{thm:frankel} Suppose that $\Omega \in \Xb_d$ and $V$ is a complex affine $k$-plane intersecting $\Omega$. Let $D = \Omega \cap V$ and suppose there exist affine maps $A_n \in \Aff(V)$ such that $A_n(D)$ converges to $D_\infty$ in $\Xb_k$. Then there exist affine maps $B_n \in \Aff(\Cb^d)$ such that $B_n(\Omega)$ converges to $\Omega_\infty$ in $\Xb_d$ with
\begin{align*}
\Omega_\infty \cap V = D_\infty.
\end{align*}
\end{theorem}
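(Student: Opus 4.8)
The plan is to extend each $A_n$ to an affine map of $\Cb^d$ that preserves $V$, and then to absorb the remaining freedom — the behaviour transverse to $V$ — by a relative form of S.~Frankel's compactness theorem. After a fixed affine change of coordinates we may assume $V=\Cb^k\times\{0\}\subset\Cb^k\times\Cb^{d-k}=\Cb^d$, and identify $\Aff(V)$ with $\Aff(\Cb^k)$. Put $\widetilde A_n:=A_n\times\id_{\Cb^{d-k}}\in\Aff(\Cb^d)$ and $\Omega_n:=\widetilde A_n(\Omega)\in\Xb_d$. Since $\widetilde A_n$ maps $V$ onto $V$ via $A_n$, we have $\Omega_n\cap V=A_n(D)\to D_\infty$ in $\Xb_k$. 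So it suffices to find $C_n$ in the group $G:=\{C\in\Aff(\Cb^d):C|_V=\id_V\}$ — these have the block form $(z,w)\mapsto(z+Qw,Rw)$ with $R\in\GL_{d-k}(\Cb)$, $Q\in\Hom(\Cb^{d-k},\Cb^k)$, and in particular fix $V$ pointwise, so $C(\Omega_n)\cap V=\Omega_n\cap V$ for every $C\in G$ — such that, along a subsequence, $C_n(\Omega_n)\to\widehat\Omega$ in $\Xb_d$; then $B_n:=C_n\circ\widetilde A_n$ and $\Omega_\infty:=\widehat\Omega$ will do.

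The key step is a \emph{relative compactness statement}: after translating so that a fixed point $y_0\in D_\infty$ is the origin, and noting that for $n$ large $\Omega_n\cap V$ contains a fixed ball of $V$ about $y_0$, there is a compact set $\Kc\subset\Xb_d$, playing the role of $\Kb_d$ from Proposition~\ref{prop:compact_set} but adapted to $V$, such that $\Omega_n\in G\cdot\Kc$ for all large $n$. Granting this, choose $C_n\in G$ with $C_n(\Omega_n)\in\Kc$ and pass to a subsequence with $C_n(\Omega_n)\to\widehat\Omega\in\Xb_d$. I would prove the relative compactness by the mechanism behind Proposition~\ref{prop:compact_set} (that is, \cite[Theorem~2.5]{Zim17}) carried out transverse to $V$: the confinement of the slice $\Omega_n\cap V$ to a compact subfamily of $\Xb_k$ already pins the domain down along $V$, and one normalizes the $d-k$ transverse directions one at a time, choosing at each stage a nearest boundary point in a transverse affine slice together with a real supporting hyperplane there and moving the pair to standard position by an element of $G$ (which acts only on the transverse directions). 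One arranges in particular that every domain in $\Kc$ contains a fixed transverse polydisk over a fixed ball of $V$ about $y_0$, so that no ``flattening onto $V$'' can occur.

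It remains to pass the slice convergence to the limit: if $\Omega_n\to\Omega_\infty$ in $\Xb_d$, each $\Omega_n$ meets $V$, the $\Omega_n$ contain a uniform transverse polydisk over a fixed ball of $V$ about $y_0$, and $\Omega_n\cap V\to D$ in $\Xb_k$, then $\Omega_\infty\cap V=D$. Here one uses repeatedly that, for convex domains, $\Omega_n\to\Omega_\infty$ implies that every compact subset of the open convex set $\Omega_\infty$ lies in $\Omega_n$ for all large $n$ with a uniform margin, together with $\Omega_\infty=\inte\overline{\Omega_\infty}$: the inclusion $\Omega_\infty\cap V\subset D$ follows because a small $V$-ball about a point $p\in\Omega_\infty\cap V$ lies uniformly in $\Omega_n\cap V\to D$, hence in $D$; the reverse inclusion $D\subset\Omega_\infty\cap V$ uses in addition the uniform transverse polydisk, which with convexity forces a genuine $d$-dimensional ball about each point of $D$ to lie in $\Omega_n$, hence in $\Omega_\infty$. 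Applying this with $\Omega_n$ replaced by $C_n(\Omega_n)$ (for which $C_n(\Omega_n)\cap V=\Omega_n\cap V=A_n(D)\to D_\infty$) yields $\widehat\Omega\cap V=D_\infty$, completing the argument.

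I expect the relative compactness statement to be the main obstacle; its content is precisely that the subgroup of $\Aff(\Cb^d)$ preserving $V$ acts ``cocompactly'' on the domains in $\Xb_d$ that meet $V$ in a controlled family — and this is essentially the assertion of \cite[Theorem~9.3]{Fra91} itself. The delicate points are the compatibility of the successive transverse normalizations and the uniform avoidance of complex affine lines (and of transverse flattening) in the normalized domains; everything else — the reduction to $G$, the $G$-invariance of the slice, and the slicing-stability lemma — is soft. In practice one invokes \cite[Theorem~9.3]{Fra91} directly.
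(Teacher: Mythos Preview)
The paper does not give its own proof of this statement; it is quoted as \cite[Theorem~9.3]{Fra91} and used as a black box in the proof of Theorem~\ref{thm:construcing_embedded_products}. So there is nothing in the paper to compare your proposal against.

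That said, your sketch is a reasonable outline of how such a proof goes, and it matches the spirit of Frankel's argument: extend $A_n$ trivially across the transverse directions, then normalize the transverse directions by an element of the stabilizer $G$ of $V$ using the same ``nearest boundary point plus supporting hyperplane'' mechanism that underlies Proposition~\ref{prop:compact_set}, and finally check that local Hausdorff convergence in $\Xb_d$ passes to slices when a uniform transverse thickness is present. You correctly identify the relative compactness step as the only place with genuine content, and you are honest that this is precisely what \cite[Theorem~9.3]{Fra91} asserts. Since you conclude by saying that in practice one invokes Frankel's result directly, your proposal and the paper's treatment coincide: both defer the proof to \cite{Fra91}.
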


The next two results will allow us to reduce to the case where the boundary contains a non-trivial affine disk. 
 
\begin{proposition}\cite[Lemma 9.5]{Zim17b}\label{prop:rescale_infinite_type} Suppose that $\Omega \in \Xb_2$ has smooth boundary. If $\partial \Omega$ has a point of infinite type,  then there exists some $\Omega_\infty \in { \rm AL}(\Omega)$ such that $\partial \Omega_\infty$ contains a non-trivial affine disk. 
\end{proposition}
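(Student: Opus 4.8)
My plan is to construct $\Omega_\infty$ by a parabolic rescaling of $\Omega$ centered at an infinite-type boundary point. First I would choose affine coordinates so that the infinite-type point is $0$, so that $\{\Imag z_1=0\}$ is the (real) supporting tangent hyperplane of $\Omega$ at $0$ and $\Omega\subseteq\{\Imag z_1>0\}$, and so that near $0$ the boundary is a graph $\partial\Omega=\{\Imag z_1=\phi(\Real z_1,z_2)\}$ with $\phi\geq0$ smooth and convex, $\phi(0)=0$, $d\phi(0)=0$; then the complex tangent line to $\partial\Omega$ at $0$ is $L=\{z_1=0\}$, and I set $\psi(w):=\phi(0,w)$. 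The first step is to deduce from ``$0$ has infinite D'Angelo type'' that $\psi$ is \emph{flat} at $w=0$, i.e.\ that for each $k$ there is $r_k>0$ (with $r_k\downarrow0$) so that $\psi(w)\le|w|^k$ for $|w|\le r_k$. Here I would use that $\nu(\psi)$ is exactly the order of contact of $L$ with $\partial\Omega$ at $0$, together with the classical fact that in $\Cb^2$ (or for convex domains) the D'Angelo type at a point is computed by complex affine lines --- here necessarily by $L$ --- so infinite type forces $\nu(\psi)=\infty$. (Alternatively, since in $\Cb^2$ D'Angelo type equals regular type, one checks $\nu(r\circ\gamma)\le\nu(\psi)$ for every nonsingular holomorphic germ $\gamma=(\gamma_1,\gamma_2)$ at $0$: from $\phi\circ\gamma=(r\circ\gamma)+\Imag\gamma_1\ge0$ and the sign-changing of $\Imag(c\zeta^m)$ one gets $\nu(\gamma_1)\ge\nu(r\circ\gamma)$, whence $r\circ\gamma=\psi\circ\gamma_2+O(\zeta^{\nu(r\circ\gamma)})$ and $\nu(\psi\circ\gamma_2)=\nu(\psi)$.)

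Next I would set up the rescaling. For small $\epsilon>0$ let $D_\epsilon:=\{w:(i\epsilon,w)\in\Omega\}$, a convex open neighbourhood of $0$ in $\Cb$ containing no complex line (as $\Omega\in\Xb_2$), with incenter $c_\epsilon$ and inradius $s(\epsilon)$, so $\Bb_1(c_\epsilon;s(\epsilon))\subseteq D_\epsilon$. Flatness of $\psi$ gives $s(\epsilon)\geq\epsilon^{1/m}$ for every $m$ and all small $\epsilon$ (so $s$ decays slower than any power), while convexity of $\psi$ gives $s(c\epsilon)\le c\,s(\epsilon)$. Rescaling by
\begin{align*}
A_\epsilon(z_1,z_2)=\Bigl(\tfrac{z_1}{\epsilon},\ \tfrac{z_2-c_\epsilon}{s(\epsilon)}\Bigr)
\end{align*}
gives $A_\epsilon(i\epsilon,c_\epsilon)=(i,0)$, $\{i\}\times\Bb_1(0;1)\subseteq A_\epsilon\Omega$, and $A_\epsilon\Omega\subseteq\{\Imag w_1>0\}$. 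By Frankel's compactness theorem for $\Aff(\Cb^2)\backslash\Xb_{2,0}$~\cite{Fra91} --- after composing with a bounded affine normalization and passing to a subsequence $\epsilon=\epsilon_n\to0$ --- the domains $A_{\epsilon_n}\Omega$ converge in $\Xb_2$ to some $\Omega_\infty$, and since the basepoints $(i\epsilon_n,c_{\epsilon_n})$ are compactly divergent in $\Omega$, we get $\Omega_\infty\in\mathrm{AL}(\Omega)$.

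The main content is then to show that $\Omega_\infty\supseteq\Hc\times\Bb_1(0;1)$, where $\Hc=\{\Imag w_1>0\}$: granting this, since also $\Omega_\infty\subseteq\{\Imag w_1>0\}$, every $(0,w)$ with $|w|<1$ lies in $\overline{\Omega_\infty}\setminus\Omega_\infty=\partial\Omega_\infty$, so $\{0\}\times\Bb_1(0;1)\subseteq\partial\Omega_\infty$ contains the non-trivial affine disk $\zeta\mapsto(0,\tfrac12\zeta)$, which is the desired conclusion. Writing $A_\epsilon\Omega=\{\Imag w_1>\Phi_\epsilon(\Real w_1,w_2)\}$ near $(i,0)$ with $\Phi_\epsilon(t,w)=\epsilon^{-1}\phi(\epsilon t,\,s(\epsilon)w+c_\epsilon)$, one first kills the dependence on $t$: $|\Phi_\epsilon(t,w)-\Phi_\epsilon(0,w)|\le|t|\,\sup|\partial_x\phi|\to0$ using $d\phi(0)=0$. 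One is reduced to showing that a subsequential limit $\theta$ of the convex functions $\theta_\epsilon(w):=\epsilon^{-1}\phi(0,\,s(\epsilon)w+c_\epsilon)$ vanishes identically near $0$ --- equivalently, that $\Omega_\infty$ is a product $\Hc\times D_\infty$ with $D_\infty\supseteq\Bb_1(0;1)$. The decisive computation is the self-similarity $\theta(\lambda w)=m(\lambda)\,\theta(w)$, with $m(\lambda)=\lim_n\epsilon_n'/\epsilon_n$ where $s(\epsilon_n')=\lambda s(\epsilon_n)$: the multiplicative function $m$ must be a power $\lambda\mapsto\lambda^\gamma$, and a finite exponent $\gamma$ would force $\psi$ (hence $\theta$) to vanish to finite order, contradicting flatness; hence $m(\lambda)=0$ for $\lambda<1$ and $\theta\equiv0$ near $0$.

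The step I expect to be the main obstacle is precisely this last one: the scale $s(\epsilon)$ must be large enough that the rescaled slices $D_\epsilon/s(\epsilon)$ do not collapse to a lower-dimensional set --- so that $\Omega_\infty$ is genuinely open and the disk in $\partial\Omega_\infty$ is non-degenerate --- and yet the flatness of $\psi$ must still dominate at that scale, so that the limit is a product and $\Omega_\infty$ does not ``pinch'' at heights $\Imag w_1$ close to $0$. Turning the heuristic ``$s$ is slowly varying, because $\psi$ is flatter than every finite-order model'' into a rigorous choice of $s(\epsilon)$ and a controlled limit is where the real work lies; the case-free conclusion of the first step --- that no polynomial lower bound $\psi(w)\gtrsim|w|^{1/\beta}$ can hold near $0$ --- is exactly what excludes the degenerate scenarios.
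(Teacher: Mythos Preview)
The paper does not prove this proposition; it is cited as \cite[Lemma~9.5]{Zim17b} and used without argument in the proof of Theorem~\ref{thm:construcing_embedded_products}. There is therefore no proof in the present paper to compare your proposal against.

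On your sketch itself: the coordinate set-up and the deduction that infinite D'Angelo type forces $\psi=\phi(0,\cdot)$ to vanish to infinite order are correct for smooth convex boundaries in $\Cb^2$, and the parabolic rescaling framework is the right one. You have also honestly located the difficulty. The self-similarity argument you propose for the last step does not work as written, however: the relation you derive is really $\theta(\lambda w)=m(\lambda)\,\theta'(w)$, where $\theta'$ is the subsequential limit of $\theta_{\epsilon}$ along the \emph{different} sequence $\epsilon_n'$ (determined by $s(\epsilon_n')=\lambda s(\epsilon_n)$), so no functional equation for a single function results; the limit defining $m(\lambda)$ need not exist, and the drifting centers $c_\epsilon$ further obstruct the argument. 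A cleaner route is to choose the $z_2$-scale directly from flatness --- take $r_n\downarrow 0$ with $\psi\le|\cdot|^n$ on $|w|\le r_n$ and set $\epsilon_n=r_n^{\,n}$ --- which makes the inclusion $(i\lambda,w)\in A_n\Omega$ for each fixed $\lambda>0$, $|w|<1$ and large $n$ immediate; a further normalization in the $w_2$-direction is then needed to keep the sequence inside a compact subset of $\Xb_2$, and this is where an inradius-type quantity legitimately enters. For the full details you should consult \cite{Zim17b} directly.
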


\begin{proposition}\cite[Theorem 1.1]{FuSt98}\label{prop:holo_implies_affine} Suppose that $\Omega \in \Xb_d$. If there exists a non-constant holomorphic map $\Db \rightarrow \partial \Omega$, then $\partial \Omega$ contains a non-trivial affine disk. 
\end{proposition}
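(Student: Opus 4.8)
The plan is to argue directly, using only elementary convexity together with the maximum principle for harmonic functions; no appeal to the later results of the paper is needed. Let $f\colon\Db\to\partial\Omega$ be a non-constant holomorphic map, and let $W$ be the smallest complex affine subspace of $\Cb^d$ containing $f(\Db)$; since $f$ is non-constant and holomorphic, $\dimension_{\Cb}W\geq 1$. The engine of the proof is the following observation: if $\psi$ is a $\Cb$-affine functional with $\Real\psi\leq c$ on $\overline\Omega$ and $\Real\psi=c$ at some point of $f(\Db)$, then $\Real(\psi\circ f)$ is harmonic on $\Db$, bounded above by $c$, and attains $c$ at an interior point, hence is constant by the maximum principle; a holomorphic function with constant real part is constant, so $\psi\circ f$ is constant on $\Db$. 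First I would apply this to a supporting $\Cb$-affine functional $\phi$ at a point $p=f(z_0)\in\partial\Omega$: every real supporting hyperplane of the convex body $\overline\Omega$ (which exists since $\Omega\neq\Cb^d$) is the zero set of $\Real\phi-c$ for a unique non-zero $\Cb$-affine functional $\phi$, so the observation gives $f(\Db)\subset\{\phi=\gamma\}$ for some $\gamma$ with $\Real\gamma=c$, whence $W\subset\{\phi=\gamma\}$ by minimality of $W$. Because a non-constant affine function has no interior maximum over $\overline\Omega$, one gets $\Real\phi<c$ on $\Omega$, so $\{\phi=\gamma\}\cap\Omega=\emptyset$, and therefore $C:=W\cap\overline\Omega$ is a closed convex subset of $W\cong\Cb^m$, $m\geq 1$, lying entirely in $\partial\Omega$ and containing $f(\Db)$.

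The second step is to show that $C$ has non-empty interior relative to $W$. Suppose not; then the real affine hull $V$ of $C$ is a proper real affine subspace of $W$, so $V=\{z\in W:\Real\psi_j(z)=c_j,\ 1\leq j\leq k\}$ for some $k\geq 1$ and real-affine functionals $\Real\psi_j$ on $W$, at least one of which, say $\Real\psi_1$, is non-constant on $W$ (otherwise $V$ would not be proper); here one uses the elementary fact that every $\Rb$-linear functional on $\Cb^m$ is the real part of a (unique) $\Cb$-linear one. Since $f(\Db)\subset C\subset V$, the observation above applies to each $\psi_j$ and yields $\psi_j\circ f\equiv\gamma_j$, so $f(\Db)$ is contained in the complex affine subspace $\bigcap_j\{\psi_j=\gamma_j\}$, which is \emph{proper} in $W$ because $\psi_1$ is non-constant there. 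This contradicts the minimality of $W$, so $C$ has non-empty interior in $W$.

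The third step is then immediate: choose $p_0$ in the interior of $C$ relative to $W$ and a non-zero vector $w$ parallel to $W$ (possible since $m\geq 1$); for $\epsilon>0$ small, the affine map $\ell(\zeta)=p_0+\epsilon\zeta w$ sends $\Db$ into a complex disk contained in $C\subset\partial\Omega$, and $\ell$ is non-constant, so $\ell(\Db)$ is the desired non-trivial affine disk.

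The step I expect to be the real obstacle is the second one: a priori a face of $\overline\Omega$ could contain the non-constant holomorphic curve $f(\Db)$ yet have empty interior in its complex affine span, and this degenerate possibility must be excluded. The resolution is exactly the argument above — one extra application of the maximum-principle observation converts ``real-affine flatness of the face'' into ``complex-affine flatness of $f(\Db)$'', which the minimality of $W$ forbids. A supporting technical point, used in both steps, is that $\Rb$-linear functionals on $\Cb^m$ (and real supporting functionals of convex sets in $\Cb^d$) are precisely the real parts of $\Cb$-linear functionals; this is what keeps the whole argument inside the category of complex affine subspaces and complex line disks, matching the definition of a non-trivial affine disk.
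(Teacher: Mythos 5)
Your proof is correct. Note that the paper itself offers no proof of this proposition: it is quoted directly from Fu--Straube \cite[Theorem 1.1]{FuSt98}, so the only meaningful comparison is with their argument, which likewise rests on the combination of a supporting hyperplane, the maximum principle for $\Real(\psi\circ f)$, and the convexity of the resulting boundary face. Your write-up is a clean, self-contained version of that strategy, and your identification of the genuine difficulty is accurate: the first application of the maximum principle only places $f(\Db)$ inside the face $C=W\cap\overline\Omega\subset\partial\Omega$, and one must still rule out that $C$ is ``real-affinely thin'' inside its complex span $W$; your second step handles this correctly by converting real-affine equations satisfied on $C$ into complex-affine equations satisfied by $f(\Db)$, contradicting the minimality of $W$. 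One small imprecision: in that second step you say ``the observation above applies to each $\psi_j$,'' but the observation as you stated it requires $\Real\psi_j\leq c_j$ on all of $\overline\Omega$, which you have not arranged (the $\psi_j$ live on $W$ and are only controlled on $V$). No matter: since $f(\Db)\subset V$ gives $\Real(\psi_j\circ f)\equiv c_j$ outright, the harmonic function is constant without any appeal to the maximum principle, and the conclusion $\psi_j\circ f\equiv\gamma_j$ follows as before. With that phrasing repaired, the argument is complete.
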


We can now prove Theorem~\ref{thm:construcing_embedded_products}.

\begin{proof}[Proof of Theorem~\ref{thm:construcing_embedded_products}]\label{sec:pf_construcing_embedded_products} By Theorem~\ref{thm:frankel} we can assume that $d=2$. So suppose that $\Omega \subset \Cb^2$ is a convex domain and either
\begin{enumerate}
\item $\Omega \in \Xb_2$ and there exists a non-constant holomorphic map $\Db \rightarrow \partial\Omega$ or
\item $\partial \Omega$ is $C^\infty$ and $\partial \Omega$ has a point of infinite type.
\end{enumerate}

In the first case, $\partial \Omega$ contains a non-trivial affine disk by Proposition~\ref{prop:holo_implies_affine}. Then Theorem~\ref{thm:construcing_embedded_products_special_case} implies that there exists some $\Omega_\infty \in { \rm AL}(\Omega)$ which is biholomorphic to $\Db \times \Db$. 

In the second case, Proposition~\ref{prop:rescale_infinite_type} implies that there exists some $\Omega_1 \in { \rm AL}(\Omega)$ such that $\partial \Omega_1$ contains a non-trivial affine disk. Then Theorem~\ref{thm:construcing_embedded_products_special_case} implies that there exists some $\Omega_\infty \in { \rm AL}(\Omega_1)$ which is biholomorphic to $\Db \times \Db$. Then $\Omega_\infty \in {\rm AL}(\Omega)$ since ${ \rm AL}(\Omega_1) \subset { \rm AL}(\Omega)$.
\end{proof}

\section{Normal families of K{\"a}hler metrics}~\label{section:normal}

This section is devoted to the proof of the following.

\begin{proposition}\label{prop:rescaling_metrics} Suppose that $\Omega_n$ converges to $\Omega_\infty$ in $\Xb_d$. Further suppose that $g_n$ is a K{\"a}hler metric on $\Omega_n$ such that:
\begin{enumerate}
\item there exists $A > 1$, independent of $n$, such that 
\begin{align*}
\frac{1}{A} k_{\Omega_n}(z;v) \leq \sqrt{g_n(z)(v,v)} \leq A k_{\Omega_n}(z;v) 
\end{align*}
for all $z \in \Omega_n$ and $v \in \Cb^d$,
\item for every $q \geq 0$ there exists $C_q > 0$, independent of $n$, such that 
\begin{align*}
\sup_{\Omega_n} \norm{\nabla^q R(g_n)}_{g_n} \leq C_q.
\end{align*}
\end{enumerate}
Then after passing to a subsequence the metrics $g_n$ converge locally uniformly in the $C^\infty$ topology to a K\"ahler metric $g_\infty$ on $\Omega_\infty$. 
\end{proposition}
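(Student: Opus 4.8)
The plan is to fix a compact set $K \subset \Omega_\infty$ and show that, for every $k \ge 0$, the components of $g_n$ in the standard Euclidean coordinates of $\Cb^d$ are bounded in $C^k(K)$ uniformly in $n$ (for $n$ large enough that $K \subset \Omega_n$); exhausting $\Omega_\infty$ by such compacts, a diagonal argument and the Arzel\`a--Ascoli theorem then produce a subsequence converging in $C^\infty_{\mathrm{loc}}(\Omega_\infty)$, whose limit $g_\infty$ is K\"ahler (the condition $d\omega_n = 0$ passes to the limit) and positive definite (by the lower bound in the next sentence). The first point is a uniform $C^0$ comparison of $g_n$ with the Euclidean metric on compacta: given $K \Subset K' \Subset \Omega_\infty$, the local Hausdorff convergence $\Omega_n \to \Omega_\infty$ gives $K' \subset \Omega_n$ for large $n$ and $\delta_{\Omega_n} \to \delta_{\Omega_\infty}$ uniformly on $K'$, while Theorem~\ref{conv-thm} gives $k_{\Omega_n} \to k_{\Omega_\infty}$ uniformly on $K' \times \{\norm{v}=1\}$; since $\Omega_\infty \in \Xb_d$ is Kobayashi hyperbolic, combining this with Lemma~\ref{lem:basic_est} furnishes $0 < c \le C$ independent of $n$ with $c\norm{v} \le k_{\Omega_n}(z;v) \le C\norm{v}$ on $K'$, and then hypothesis~(1) gives $\tfrac{c}{A}\norm{v} \le \sqrt{g_n(z)(v,v)} \le AC\norm{v}$ on $K'$.

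Next I would upgrade this to uniform $C^\infty$ control using hypothesis~(2). Note first that each $g_n$ is complete, since it is bi-Lipschitz to $k_{\Omega_n}$ and $(\Omega_n, K_{\Omega_n})$ is a proper geodesic space by Theorem~\ref{thm:barth}. Fix $p \in K$. By the Euclidean comparison there is $\rho_0 > 0$, independent of $n$ and of $p \in K$, such that the $g_n$-ball $B_{g_n}(p, \rho_0)$ is contained in a fixed compact $K'' \Subset \Omega_\infty$, and on such a ball the $g_n$-volume of $B_{g_n}(p, r)$ is at least $c'' r^{2d}$ for $r \le \rho_0$, because it contains a Euclidean ball of comparable radius and the volume forms are comparable. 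With the curvature bound from hypothesis~(2), a Cheeger--Gromov--Taylor type injectivity radius estimate then yields $\mathrm{inj}_{g_n}(p) \ge i_0 > 0$ for all $p \in K$ and $n$ large. A two-sided curvature bound, this uniform lower injectivity radius bound, and the uniform bounds on $\norm{\nabla^q R(g_n)}_{g_n}$ for all $q$ are the input of the classical bounded-geometry regularity theory, which produces for each $p \in K$ a holomorphic chart $\psi_{n,p}\colon \Bb_d(0;r_1) \to \Omega_n$ with $\psi_{n,p}(0) = p$, in which the components of $g_n$ are bounded in $C^k$ for every $k$ and uniformly comparable to the Euclidean metric, all bounds being independent of $n$ and $p$.

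To return to the fixed coordinates, for $p \in K$ and $w$ in a fixed small Euclidean ball about $0$ set $\varphi_{n,p}(w) := \psi_{n,p}^{-1}(p+w)$, a biholomorphism into $\Bb_d(0;r_1)$ whose differential at $0$ equals $(d\psi_{n,p}(0))^{-1}$ and is bounded above and below, since both Euclidean structures involved are comparable to $g_n(p)$ by the two comparisons above. Being uniformly bounded, $\varphi_{n,p}$ has all its derivatives uniformly bounded on a slightly smaller ball by the Cauchy estimates. Since $g_n$ expressed in the Euclidean coordinates near $p$ is the pullback by $\varphi_{n,p}$ of its $C^\infty$-bounded expression in the chart $\psi_{n,p}$, it is itself $C^\infty$-bounded near $p$, uniformly in $n$; covering $K$ by finitely many such neighborhoods gives the uniform $C^k(K)$ bounds required in the first paragraph, and hence the proposition.

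I expect the delicate point to be the middle step: making the classical implication ``bounded curvature with bounded covariant derivatives $\Rightarrow$ uniform local regularity in good holomorphic charts'' genuinely uniform in $n$, including the injectivity radius lower bound extracted from the volume estimate. By contrast, the fact that the $g_n$ live on a varying family of domains $\Omega_n$ is handled painlessly by working only on compact subsets of $\Omega_\infty$ that eventually lie in every $\Omega_n$, thanks to Theorem~\ref{conv-thm}.
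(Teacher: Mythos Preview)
Your proposal is correct and follows essentially the same strategy as the paper: produce, uniformly in $n$, good holomorphic charts in which the components of $g_n$ have uniform $C^k$ bounds (this is the content of hypothesis~(2)), then transfer these bounds to the ambient Euclidean coordinates using hypothesis~(1), and conclude by a compactness argument. The positive-definiteness of the limit via the lower Kobayashi bound is handled the same way.

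The execution differs in two places worth noting. First, the paper packages your ``delicate point'' entirely into the Wu--Yau characterization of quasi-bounded geometry (Theorem~\ref{thm:nec_suff_quasi_bd_geom}): completeness together with the bounds $\sup_{\Omega_n}\norm{\nabla^q R(g_n)}_{g_n}\le C_q$ already yields quasi-coordinate maps $\psi$ with constants depending only on $\{C_q\}$ and $d$, with no injectivity-radius input needed, since these maps are only required to be nonsingular, not injective. So your Cheeger--Gromov--Taylor step, while correct, is a detour. Second, the paper runs the transfer step by contradiction: assuming some Euclidean derivative of $(g_{n_k})_{i\bar j}$ blows up along $z_k\to z_\infty$, it uses the Kobayashi comparison to show the quasi-coordinate maps $\psi_k$ are uniformly bounded, applies Montel to get a limit $\psi$, checks $\psi$ is nonsingular at $0$ (again via the Kobayashi comparison and Lemma~\ref{lem:basic_est}), and then uses convergence of the local inverses to contradict the blow-up. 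Your direct route via Cauchy estimates on $\varphi_{n,p}=\psi_{n,p}^{-1}(p+\cdot)$ is equivalent in spirit; just note that for this to make sense you need $\psi_{n,p}$ to be a biholomorphism onto a set containing a uniform Euclidean ball about $p$, which follows from a quantitative inverse function theorem once you have uniform bounds on $\psi_{n,p}$ (these come from Cauchy estimates after you confine the image to a fixed compact via your $\rho_0$ argument) together with the two-sided bound on $d\psi_{n,p}(0)$.
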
 

The proof requires the notion of \emph{quasi-bounded geometry}, which was introduced by S.Y. Cheng and S.T. Yau in \cite{CY80}.

\begin{definition}\label{defn:quasi_bd_geom} A K{\"a}hler manifold $(M,g)$ of complex dimension $d$ is said to have \emph{quasi-bounded geometry} if there exist constants $r_2 > r_1 > 0$, $C>1$, and a sequence $\{A_q\}_{q \geq 0}$ of positive numbers such that: for every point $m \in M$ there is a domain $U \subset \Cb^n$ and a nonsingular holomorphic map $\psi:U \rightarrow M$ satisfying the following properties
\begin{enumerate}
\item $\psi(0)=m$, 
\item $\Bb_d(0;r_1) \subset U \subset \Bb_d(0;r_2)$,
\item $C^{-1} g_{\Cb^d} \leq \psi^* g \leq C g_{\Cb^d}$ where $g_{\Cb^d}$ is the standard Euclidean metric on $\Cb^d$, 
\item for every integer $q \geq 0$ 
\begin{align*}
\sup_{x \in U} \abs{ \frac{ \partial^{\abs{\mu}+\abs{\nu}} ((\psi^* g)_{i\overline{j}})}{\partial z^\mu \partial \bar{z}^{\nu}}(x)} \leq A_q \text{ for all } \abs{\mu}+\abs{\nu} \leq q, \ 1 \leq i,j \leq d
\end{align*}
where  $(\psi^* g)_{i\overline{j}}$ is the component of $\psi^*g$ in terms of the canonical coordinates $z=(z_1,\dots, z_d)$ on $\Cb^d$ and $\mu,\nu$ are multiple indices with $\abs{\mu}=\mu_1+\dots+\mu_d$.
\end{enumerate}
The map $\psi$ is called a \emph{quasi-coordinate map} and the pair $(U,\psi)$ is called a \emph{quasi-coordinate chart} of $M$. 
\end{definition}

We will use the following theorem of D. Wu and S.T. Yau.

\begin{theorem}\label{thm:nec_suff_quasi_bd_geom}\cite[Theorem 9]{WY17} Let $(M,g)$ be a complete K{\"a}hler manifold of complex dimension $d$. Then $(M,g)$ has quasi-bounded geometry if and only if for every integer $q \geq 0$ there exists a constant $C_q > 0$ such that the curvature tensor $R(g)$ of $g$ satisfies
\begin{align*}
\sup_{M} \norm{ \nabla^q R(g)}_{g}  \leq C_q.
\end{align*}
Moreover, one can choose the constants $r_1$, $r_2$, $C$, and $\{ A_q\}_{q \geq 0}$ in Definition~\ref{defn:quasi_bd_geom} to depend only on $\{ C_q\}_{q \geq 0}$ and $d$. 
\end{theorem}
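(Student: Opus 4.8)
\emph{Structure of the argument.} The equivalence has one easy implication and one substantive one; the ``moreover'' clause is automatic, since every estimate below depends only on $\{C_q\}_{q\ge 0}$ and $d$. For the easy implication, fix $m \in M$ and a quasi-coordinate chart $(U,\psi)$ with $\psi(0)=m$. On $\Bb_d(0;r_1)$ the components of $R(\psi^*\omega)$ and of each $\nabla^q R(\psi^*\omega)$ are universal polynomial expressions in the functions $(\psi^*\omega)_{i\bar j}$, in the entries of the inverse matrix of $\big((\psi^*\omega)_{i\bar j}\big)$, and in the partial derivatives of the $(\psi^*\omega)_{i\bar j}$ of order at most $q+2$. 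Condition (3) bounds the metric together with its inverse, condition (4) bounds the derivatives, so $\sup_{\Bb_d(0;r_1)}\norm{\nabla^q R(\psi^*\omega)}_{\psi^*\omega}$ is bounded by a constant depending only on $C$, $\{A_j\}_{j\le q+2}$ and $d$; since $\psi$ is a holomorphic local isometry onto a neighborhood of $m$ and $m$ was arbitrary, $\sup_M \norm{\nabla^q R(\omega)}_\omega$ is bounded by the same constant.

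\emph{Bounded covariant derivatives of curvature imply quasi-bounded geometry: the Riemannian step.} Assume $\sup_M \norm{\nabla^q R(\omega)}_\omega \le C_q$ for every $q$, and fix $m$. Since $\abs{R(\omega)}_\omega \le C_0$ and $\omega$ is complete, there is $\rho_0 = \rho_0(C_0,d)>0$ strictly below the conjugate radius along every geodesic issuing from $m$, so $\exp_m$ is a nonsingular immersion on the $g$-ball $\Bb_d(0;\rho_0) \subset T_mM \cong \Cb^d$. Put $\psi_0 = \exp_m|_{\Bb_d(0;\rho_0)}$ and pull the K\"ahler data back: $(\hat g,\hat J) = (\psi_0^*g,\psi_0^*J)$. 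Because $\psi_0$ is a local diffeomorphism, $\hat J$ is integrable and $\hat g$ is $\hat J$-K\"ahler; the center $0$ has $\hat g$-injectivity radius at least $\rho_0$; by Rauch comparison in both directions $\hat g$ is uniformly bi-Lipschitz to $\omega_{\Cb^d}$ on $\Bb_d(0;\rho_0/2)$; and $\norm{\nabla^q R(\hat g)}_{\hat g}\le C_q$. (Note that $\psi_0$, and hence the eventual $\psi$, need not be injective; this is exactly why the maps in Definition~\ref{defn:quasi_bd_geom} are required only to be nonsingular.) Now invoke the classical harmonic-coordinate estimates under bounded geometry (Jost--Karcher, Anderson): there is a harmonic coordinate ball about $0$ of radius $r_2 = r_2(C_0,d)$ on which $\hat g$ is uniformly bi-Lipschitz to $\omega_{\Cb^d}$, and inserting the bounds on $\nabla^q R(\hat g)$ into the identity $\Delta_{\hat g}\hat g_{ij} = -2\,\mathrm{Ric}(\hat g)_{ij} + Q(\hat g,\partial\hat g)$ satisfied by the metric in harmonic coordinates and bootstrapping yields uniform $C^{q,\alpha}$ bounds on $\hat g_{ij}$, hence on $\hat J$, for every $q$.

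\emph{Passage to holomorphic coordinates.} Since $\hat J$ is integrable and $\hat g$ is $\hat J$-K\"ahler, the Newlander--Nirenberg theorem with estimates, applied to $\hat J$ (which in the harmonic chart carries the uniform $C^{q,\alpha}$ bounds just obtained), provides holomorphic coordinates on a slightly smaller ball, related to the harmonic ones by a change of coordinates with uniform $C^{q+1,\alpha}$ bounds for every $q$. Composing $\psi_0$ with this change of coordinates and restricting to $\Bb_d(0;r_1)$ produces a nonsingular holomorphic map $\psi\colon \Bb_d(0;r_1)\to M$ with $\psi(0)=m$, $C^{-1}\omega_{\Cb^d}\le \psi^*\omega \le C\,\omega_{\Cb^d}$, and uniform bounds on all partial derivatives of the components $(\psi^*\omega)_{i\bar j}$; that is, properties (1)--(4) of Definition~\ref{defn:quasi_bd_geom} hold with $r_1,r_2,C,\{A_q\}$ depending only on $\{C_q\}$ and $d$, which gives both quasi-bounded geometry and the ``moreover'' clause. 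One could instead derive the higher-order bounds from a holomorphic chart with only $C^0$ control, by regarding the K\"ahler curvature identity $R_{i\bar j k\bar l} = -\partial_k\partial_{\bar l}h_{i\bar j} + h^{p\bar q}\partial_k h_{i\bar q}\,\partial_{\bar l}h_{p\bar j}$, with $h_{i\bar j}=(\psi^*\omega)_{i\bar j}$, as an elliptic equation for $h$ driven by the curvature bounds, and bootstrapping it.

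\emph{Main obstacle.} The crux is the second implication, and specifically the construction of a holomorphic chart whose size is bounded below in terms of $C_0$ and $d$ alone: one must check that $\exp_m$ produces, uniformly in $m$, a K\"ahler ball with controlled injectivity radius and curvature, that the harmonic-coordinate regularity theory applies with $m$-independent constants, and that Newlander--Nirenberg can be run with estimates that do not degenerate. The Riemannian ingredients are classical, but assembling them into the precise quasi-coordinate package—tracking in particular that nothing degenerates as $m$ ranges over $M$—is where the technical weight of \cite{WY17} lies.
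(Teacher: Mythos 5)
This theorem is quoted from \cite[Theorem 9]{WY17} and the paper gives no proof of its own, so there is nothing internal to compare against; your outline is, however, essentially the argument of the cited source: the easy direction by expressing $\nabla^q R$ universally in the quasi-coordinate data, and the hard direction via the conjugate-radius bound making $\exp_m$ a nonsingular chart of uniform size, pulling back $(g,J)$, obtaining uniform $C^{q,\alpha}$ control through harmonic-coordinate elliptic estimates, and then applying a quantitative Newlander--Nirenberg theorem to produce holomorphic quasi-coordinates. The sketch is correct at this level of detail and correctly identifies where the technical weight lies (uniformity in $m$ of the harmonic radius and of the Newlander--Nirenberg estimates).
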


\begin{proof}[Proof of Proposition~\ref{prop:rescaling_metrics}]
We first show that after passing to a subsequence the metrics $g_n$ converge locally uniformly in the $C^\infty$ topology to an indefinite Hermitian metric on $\Omega_{\infty}$. To accomplish this  it is enough to show the following: for every compact set $K \subset \Omega_\infty$ and multi-indices $\mu,\nu$ there exist $N \geq 0$ and $C(K,\mu,\nu) > 1$, depending only on $K$, and $\mu$, $\nu$, such that 
\begin{align*}
\sup_{n \geq N} \max_{z \in K} \abs{ \frac{ \partial^{\abs{\mu}+\abs{\nu}} (g_n)_{i\overline{j}}}{\partial z^\mu \partial \overline{z}^{\nu}}(z)} \leq C(K,\mu,\nu).
\end{align*}
Suppose for a contradiction that there exist $K$ and $\mu,\nu$ where no such $N$ and $C(K,\mu,\nu)$ exist. Then for each $k \in \Nb$ there exist $n_k \geq k$ and $z_k \in K$ such that 
\begin{align*}
\abs{ \frac{ \partial^{\abs{\mu}+\abs{\nu}} (g_{n_k})_{i\overline{j}}}{\partial z^\mu \partial \bar{z}^{\nu}}(z_k)} \geq k.
\end{align*}
By passing to a subsequence we can assume that $z_k \rightarrow z_\infty \in \Omega_\infty$. By passing to another subsequence we can assume that $K \subset \Omega_{n_k}$ for all $k \geq 1$. 

By Theorem~\ref{thm:nec_suff_quasi_bd_geom}, each $(M,g_n)$ has quasi-bounded geometry with constants independent of $n$. So for every $k$ there is a domain $U_k \subset \Cb^d$ and a non-singular holomorphic map $\psi_k : U_k \rightarrow \Omega_{n_k}$ with $\psi_k(0)=z_k$ which satisfies the conditions in Definition~\ref{defn:quasi_bd_geom} with uniform parameters $r_1$, $r_2$, $C$, and $\{ A_q\}_{q \geq 0}$.

Fix some $r < r_1$. Then, since $\Bb_d(0;r_1) \subset U_k$ for every $k$, we have 
$$
\begin{array}{lll}
\sup_{w \in  \psi_k(\Bb_d\left(0;r)\right)} K_{\Omega_{n_k}}(z_k,w) & \leq & \sup_{\zeta \in \Bb_d\left(0;r\right)}K_{U_k}(0,\zeta)\\
 & \leq & \sup_{\zeta \in \Bb_d\left(0;r\right)}K_{\Bb_d(0;r_1)}(0,\zeta)\\
 & = & K_{\Bb_d}(0,r/r_1).
\end{array}
$$ 
Theorem~\ref{conv-thm} implies that $K_{\Omega_{n_k}}$ converges locally uniformly to $K_{\Omega_\infty}$ and hence the maps $\psi_{k}$ are uniformly bounded on the ball $\Bb_d(0;r)$. Since $r < r_1$ was arbitrary, Montel's theorem implies that after passing to a subsequence, $\psi_{k}$ converges to a holomorphic map $\psi:\Bb_d(0,r_1) \rightarrow \Omega_{\infty}$.

We next claim that $\psi$ is locally invertible at $z=0$. It follows from Condition (3) in Definition~\ref{defn:quasi_bd_geom} that 
$$
\frac{1}{C} \norm{v} \leq \sqrt{g_{n_k}(d(\psi_k)_0 v,d(\psi_k)_0 v)}
$$  
for every $k \geq 1$ and $v \in \Cb^d$. Then by Lemma~\ref{lem:basic_est} we have
$$
\frac{1}{C} \norm{v} \leq \sqrt{g_{n_k}(d(\psi_k)_0 v,d(\psi_k)_0 v)} \leq A  k_{\Omega_{n_k}}(z_k;d(\psi_k)_0 v) \leq  A \frac{\norm{d(\psi_k)_0 v}}{\delta_{\Omega_{n_k}}(z_k)}.
$$
In particular,
$$
\norm{d(\psi)_0 v} = \lim_{k \rightarrow \infty} \norm{d(\psi_k)_0 v} \geq \lim_{k \rightarrow \infty} \frac{1}{AC} \norm{v}\delta_{\Omega_{n_k}}(z_k) = \frac{\delta_{\Omega_\infty}(z_\infty)}{AC} \norm{v}.
$$
This implies that $\psi$ is non-singular at 0 and hence locally invertible. 

Pick a neighborhood $U_1$ of $0$ such that $\psi|_{U_1}$ is invertible. Next fix a neighborhood $U_2$ of $0$ such that $\overline{U_2} \subset U_1$. Since $\psi_k$ converges in the $C^\infty$ topology to $\psi$, we can find $M >0$ such that $\psi_k|_{U_2}$ is invertible when $k \geq M$. Then $\psi_k|_{U_2}^{-1}$ converges locally uniformly to $\psi|_{U_2}^{-1}$.

Next fix a neighborhood $V$ of $z_\infty = \psi(0)$ such that $\overline{V} \subset \psi(U_2)$. By increasing $M$, we can assume that $V \subset \psi_k(U_2)$ for all $k \geq M$ and
\begin{align}
\sup_{k \geq M} \sup_{z \in V} \abs{ \frac{ \partial^{\abs{a}+\abs{b}} \left(\psi_k|_{U_2}\right)^{-1}}{\partial z^a \partial \overline{z}^{b}}(z)} < \infty \label{eq:estimate}
\end{align}
for every $\abs{a}+\abs{b} \leq \abs{\mu}+\abs{\nu}$. By possibly increasing $M$ again we can assume that $z_k \in V$ for all $k \geq M$.  But then Condition~\eqref{eq:estimate} and Condition (4) in Definition~\ref{defn:quasi_bd_geom} imply that 
\begin{align*}
\sup_{k \geq M} \abs{ \frac{ \partial^{\abs{\mu}+\abs{\nu}} (g_{n_k})_{i\overline{j}}}{\partial z^\mu \partial \overline{z}^{\nu}}(z_k)} < \infty,
\end{align*}
which is a contradiction.

Hence, after passing to a subsequence, we may assume that $g_n$ converges locally uniformly in the $C^\infty$ topology to an indefinite Hermitian metric $g_{\infty}$ on $\Omega_{\infty}$. It remains to show that $g_{\infty}$ is positive definite and K\"ahler.
From Theorem~\ref{conv-thm} we have
$$
\begin{array}{lll}
\sqrt{g_{\infty}(z)(v,v)} =\lim_{k \rightarrow \infty}\sqrt{g_{n_k}(z)(v,v)}  & \geq & \frac{1}{A} \lim_{k \rightarrow \infty} k_{\Omega_{n_k}}(z;v)\\
 & = & \frac{1}{A} k_{\Omega_{\infty}}(z;v)\\
 & > & 0
\end{array}
$$
for every $z \in \Omega_{\infty}$ and $v \in \Cb^d$. Hence $g_{\infty}$ is positive definite. Finally, since each $g_n$ is a K\"ahler metric and the convergence is locally uniformly in the $C^{\infty}$ topology, the metric $g_{\infty}$ is also K\"ahler.
\end{proof}

\section{Ricci flow on K{\"a}hler manifolds}\label{section:ricci}

In this section we use the Ricci flow to deform a complete K\"ahler metric $g$ with bounded sectional curvature and obtain a new K{\"a}hler metric with better properties. 

\begin{theorem}\label{kap-thm}Let $(M,J)$ be a complex manifold of dimension $d$ with complete K{\"a}hler metric $g$ and let $T(g)$ denote either $H(g)$ or $B(g)$. Suppose that the sectional curvature of $g$ is bounded in absolute value by a real number $\kappa> 0$. Then for every $\epsilon > 0$ and $r > 0$ there exists a complete K{\"a}hler metric $h$ with the following properties:
\begin{enumerate}
\item[($i$)] $g$ and $h$ are $(1+\epsilon)$-bi-Lipschitz,
\item[($ii$)] for every $q \geq 0$ there exists $C_q > 0$ such that 
\begin{align*}
\sup_{M} \norm{\nabla^q R(h)}_{h} \leq C_q,
\end{align*}
\item[($iii$)] for every $z \in M$ 
\begin{align*}
\inf_{B_g(z,r)} T(g)-\epsilon \leq T(h)|_{B_g(z,r)} \leq \sup_{B_g(z,r)} T(g)+\epsilon
\end{align*}
where $B_g(z,r)$ is the ball of radius $r$ centered at $z$ in the distance induced by $g$. 
\end{enumerate}
Moreover, for every $q \geq 0$, the constant $C_q$ can be chosen to depend only on $q$, $\kappa$, $\epsilon$, $r$, and $d$.
\end{theorem}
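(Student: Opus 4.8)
The plan is to take $h$ to be the time-$t_0$ metric of the Ricci flow $\partial_t g(t) = -2\operatorname{Ric}(g(t))$, $g(0)=g$, for a small parameter $t_0 = t_0(\kappa,\epsilon,r,d)>0$ fixed at the very end. The one substantial external input is Shi's short-time existence theorem for the Ricci flow with bounded curvature: since $(M,g)$ is complete with $\norm{R(g)}_g \le C_d\kappa$, there is a complete solution $g(t)$ on $[0,\tau_0]$ with $\tau_0=\tau_0(\kappa,d)>0$, along which $\norm{R(g(t))}_{g(t)} \le 2C_d\kappa$ and Shi's interior derivative estimates hold,
\[
\sup_M \norm{\nabla^q R(g(t))}_{g(t)} \le \frac{B_q(\kappa,d)}{t^{q/2}}, \qquad q\ge 1,\ 0<t\le\tau_0 .
\]
Since $M$ is K\"ahler with fixed complex structure $J$, the Ricci flow preserves the K\"ahler condition, so each $g(t)$ is a complete K\"ahler metric. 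Items (i) and (ii) now follow directly. For (ii) put $h=g(t_0)$ and $C_q := B_q(\kappa,d)\,t_0^{-q/2}$, which depends only on $q,\kappa,\epsilon,r,d$ once $t_0$ is pinned down in those terms. For (i), from $\norm{\operatorname{Ric}(g(t))}_{g(t)}\le C_d\kappa$ on $[0,\tau_0]$ and $\partial_t g(t)=-2\operatorname{Ric}(g(t))$ one gets $e^{-2C_d\kappa t}\,g \le g(t) \le e^{2C_d\kappa t}\,g$ pointwise, so $g$ and $h$ are $(1+\epsilon)$-bi-Lipschitz provided $t_0\le\min\{\tau_0,\ \tfrac{1}{2C_d\kappa}\log(1+\epsilon)\}$; this also makes $g$-balls and $g(t)$-balls mutually comparable, up to a factor close to $1$, for all $t\le t_0$.

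The heart of the theorem, and the step I expect to be the main obstacle, is (iii): transporting a local upper/lower bound on $T(g)$ forward to $T(h)$ across a fixed distance scale $r$. I would do this via the evolution equation for $T$ along the flow together with a localized maximum principle. Define $f(x,t):=\sup T(g(t))(X)$, where the supremum runs over $g(t)$-unit vectors $X\in T_xM$ in the holomorphic sectional case, and over $g(t)$-unit pairs in the bisectional case; let $\underline f(x,t)$ be the corresponding infimum. Then $f,\underline f$ are locally Lipschitz with $\abs{f},\abs{\underline f}\le 2C_d\kappa$, and, because under the (K\"ahler-)Ricci flow the curvature tensor satisfies $\partial_t R = \Delta_{g(t)} R + (\text{quadratic in }R)$ while the remaining contributions, coming from differentiating the normalization through $\partial_t g=-2\operatorname{Ric}$, are again bounded by the curvature, a standard computation (extending a maximizing vector to a $g(t_0)$-parallel local vector field and using the barrier formulation of the maximum principle) gives
\[
\partial_t f \le \Delta_{g(t)} f + C_0, \qquad \partial_t \underline f \ge \Delta_{g(t)}\underline f - C_0, \qquad C_0=C_0(\kappa,d).
\]

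It remains to run the localized maximum principle. Fix $z\in M$. Since $\norm{R(g(t))}_{g(t)}\le 2C_d\kappa$ forces a uniform lower Ricci bound (hence volume doubling and Gaussian bounds for the time-dependent heat kernel), comparing the subsolution $f-C_0 t$ against a cutoff barrier adapted to the ball yields, for all $w\in B_g(z,r)$,
\[
f(w,t_0)\ \le\ \sup_{B_g(z,2r)} T(g)\ +\ C_0 t_0\ +\ C_1(\kappa,d)\,e^{-r^2/(C_1(\kappa,d)\,t_0)} ,
\]
and symmetrically $\underline f(w,t_0)\ \ge\ \inf_{B_g(z,2r)} T(g)-C_0 t_0 - C_1 e^{-r^2/(C_1 t_0)}$. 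Now choose $t_0$, depending only on $\kappa,\epsilon,r,d$, small enough that also $C_0 t_0 + C_1 e^{-r^2/(C_1 t_0)}<\epsilon$; this gives (iii), with at worst a ball of radius $2r$ in place of $r$ on the two sides — which is all that is needed in Theorem~\ref{thm:affine} and Theorem~\ref{thm:char_str_pconvex}, since there $T(g)$ is bounded on the complement of a compact set and enlarging the ball costs nothing. With this $t_0$ all the constants $C_q$ in (ii) depend only on $q,\kappa,\epsilon,r,d$. The genuinely delicate point is the locality in (iii): Shi's second-derivative estimate degenerates like $t^{-1}$ as $t\to0$, so $T(h)-T(g)$ cannot be controlled by naively integrating $\partial_t R$; it is the parabolic averaging at scale $\sqrt{t_0}$ built into the maximum-principle argument that does the work, and one must be careful to absorb the cutoff error and the $g$- versus $g(t)$-ball discrepancy into $\epsilon$.
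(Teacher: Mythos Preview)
Your approach is essentially the same as the paper's: run Shi's Ricci flow for a short time $t_0$, use K\"ahler preservation and Shi's derivative estimates for (i)--(ii), and for (iii) exploit the evolution $\partial_t R = \Delta R + R*R$ together with a localized maximum principle. The only difference is in how (iii) is localized: the paper simply cites Kapovitch~\cite{Kap05} and explains the one-line modification---replace $K(g_t)(U,V)$ by $B(g_t)(U,V)$ (resp.\ $H(g_t)(U)$) in Kapovitch's test function $\Phi_z(x,t)=K(g_t)(x,U,V)\,\zeta_z(x)$, and replace $|U\wedge V|$ by $|U|_{g_t}|V|_{g_t}$ (resp.\ $|U|_{g_t}^2$)---which yields the clean linear error $\sup_{B_g(z,r)} T(g) + C(d,r,T)\,t$ with the \emph{same} radius $r$; your heat-kernel/Gaussian localization produces the extra $e^{-r^2/Ct_0}$ term and a $2r$ in place of $r$, which as you correctly observe is immaterial for Theorems~\ref{thm:affine} and~\ref{thm:char_str_pconvex}.
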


\begin{remark} Everything but Part ($iii$) follows from results of W. X. Shi~\cite{Shi89, Shi97}. To prove Part ($iii$) we adapt an argument of V. Kapovich~\cite{Kap05}. A global version of Part ($iii$) for sectional curvature and holomorphic sectional curvature appears in~\cite[Lemma 13]{WY17}. \end{remark}

\begin{proof}
We assume, without loss of generality, that the sectional curvatures of $g$ are uniformly bounded on $M$ between -1 and 1, namely $\kappa = 1$. We recall that the Ricci flow of $g$ is given by 
\begin{equation}\label{ricci-eq}
\frac{\partial}{\partial t}g=-2 { \rm Ric}(g)
\end{equation}
where ${\rm Ric}(g)$ denotes the Ricci curvature tensor of $g$. 

By Theorem 1.1 in~\cite{Shi89}, Equation (\ref{ricci-eq}) has some solution $g_t$ for every $t \in [0,t_0]$, where $t_0 > 0$ depends only on $d$ and $\kappa$. Moreover, there exists $c(d,t_0) > 0$ and, for every $q \geq 0$, there exists $c(d,q,t_0) > 0$ such that $g_t$ satisfies the following conditions \begin{equation}\label{estimates-eq}
e^{-c(d,t_0)t} g \leq g_t \leq e^{c(d,t_0)t}g \ \ \ {\rm and} \ \ \ \norm{\bigtriangledown^qR(g_t)}_{g_t}^2 \leq \frac{c(d,q,t_0)}{t^q}.
\end{equation}
By Theorem 5.1 in~\cite{Shi97}, $g_t$ is a K\"ahler metric for any $t \in [0,t_0]$.

V. Kapovich proved in~\cite[Proposition, Remark 1]{Kap05} that for every $r > 0$ there exists a constant $C(d,r,t_0)>0$ such that 
\begin{align*}
\inf_{B_g(z,r)} K(g) -C(d,R,t_0)t \leq K(g_t)|_{B_g(z,r)} \leq \sup_{B_g(z,r)} K(g)+C(d,r,t_0)t
\end{align*}
where $K$ denotes the sectional curvature. His argument can also be used to show that 
\begin{align}\label{eq:local_est}
\inf_{B_g(z,r)} T(g) -C(d,R,t_0)t \leq T(g_t)|_{B_g(z,r)} \leq \sup_{B_g(z,r)} T(g)+C(d,r,t_0)t
\end{align}
(after possibly increasing $C(d,r,t_0)$). We now explain the necessary modifications. 

Fix $U,V \in T_{x_0} M$ with $\norm{U}_g = \norm{V}_g = 1$. The change consists in replacing $\Phi_z(x,t)$ in the proof of Proposition in \cite{Kap05} with the function
$$
\wt{\Phi}_z(x,t):= B(g_t)(x,U,V) \zeta_z(x)=\frac{R(g_t)(U,JU,V,JV)}{|U|^2_{g_t}|V|^2_{g_t}}\zeta_z(x),
$$
when $T=B$, and with the function

$$
\wt{\Phi}_z(x,t):= H(g_t)(x,U) \zeta_z(x)=\frac{R(g_t)(U,JU,JU,U)}{|U|^4_{g_t}}\zeta_z(x)
$$
when $T=H$.

Then the proof follows line by line the proof of Proposition in \cite{Kap05}, replacing everywhere $|U \wedge V|$ with $|U|_{g_t} \cdot |V|_{g_t}$ when $T=H$ and  with $|U|^2_{g_t}$ when $T=B$. This modification also requires the fact that there exists a constant $C(d,t_0) > 0$ such that for every $x \in M$ and for every $t \in [0,t_0]$,
$$
\left| \frac{\partial |U|_{g_t}(x,t)|}{\partial t}\right| \leq C(d,t_0), \ \left| \frac{\partial |V|_{g_t}(x,t)|}{\partial t}\right| \leq C(d,t_0).
$$

It follows from Equations~\eqref{estimates-eq} and~\eqref{eq:local_est} that we can pick some $t > 0$, which only depends on $\kappa$, $\epsilon$, $R$, and $d$, such that the metric $h = g_t$ satisfies Parts ($i$), ($ii$), and ($iii$) of Theorem~\ref{kap-thm}. Moreover, for every $q$ the constant $C_q$ can be chosen to only depend on $q$, $\kappa$, $\epsilon$, $r$, $d$, and are provided by (\ref{estimates-eq}). This ends the proof.
\end{proof}

\section{Metric deformation on convex domains: Proof of Theorem~\ref{thm:affine}}\label{sec:pf_of_thm_affine}

In this section we prove the following stronger version of Theorem~\ref{thm:affine}.

\begin{theorem}\label{thm:affine_closures} Suppose that $\Omega \in \Xb_d$, $g$ is a complete K{\"a}hler metric on $\Omega$, and $T(g)$ is either $H(g)$ or $B(g)$. Assume there exist $0 < b < a$ such that
\begin{align*}
-a \leq T(g)  \leq -b
\end{align*}
outside a compact subset $K$ of $\Omega$. If $\Omega_\infty \in { \rm AL}(\Omega)$ and $\epsilon > 0$, then there exists a complete K{\"a}hler metric $g_\infty$ on $\Omega_\infty$ with
\begin{enumerate}
\item $-b-\epsilon \leq T(g_\infty)  \leq -a+\epsilon$ on $\Omega_\infty$,
\item there exists $A_{\infty}>1$ such that $g_\infty$ and $k_{\Omega_\infty}$ are $A_{\infty}$-bi-Lipschitz on $\Omega_\infty$, and
\item for every $q \geq 0$ there exists $C_q > 0$ such that 
\begin{align*}
\sup_{\Omega_\infty} \norm{\nabla^q R(g_{\infty})}_{g_\infty} \leq C_q.
\end{align*}
\end{enumerate}
Moreover, the constant $A_{\infty}$ can be chosen to depend only on $a$, $b$, $\epsilon$, and $d$ and, for every $q \geq 0$, the constant $C_q$ can be chosen to depend only on $q$, $a$, $b$, $\epsilon$, and $d$. 

\end{theorem}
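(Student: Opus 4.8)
\textbf{Proof proposal for Theorem~\ref{thm:affine_closures}.}

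The plan is to combine the Yau--Schwarz Lemma, the Ricci flow deformation of Theorem~\ref{kap-thm}, and the compactness result of Proposition~\ref{prop:rescaling_metrics}, carried out along an affine rescaling sequence defining $\Omega_\infty$. First I would record the consequence of the Yau--Schwarz Lemma (stated as Lemma~\ref{kob-lem1} in the introduction): since $g$ has holomorphic sectional curvature bounded above by $-b<0$ outside a compact set, and since $\Omega \in \Xb_d$ is biholomorphic to a bounded domain so its Kobayashi metric is complete and nondegenerate, $g$ is bi-Lipschitz to $k_\Omega$ away from a compact set; a compactness argument on the compact part then upgrades this to a global bi-Lipschitz bound $\frac1{A_0}k_\Omega \le \sqrt{g(\cdot,\cdot)} \le A_0 k_\Omega$ on all of $\Omega$, with $A_0$ depending only on $a$, $b$, $d$ (here one uses that the curvature pinching controls the bi-Lipschitz constant via Yau--Schwarz in both directions; some care is needed to see the constant is universal, but this is where the hypothesis that the pinching holds \emph{outside a compact set} rather than globally is absorbed). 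In particular the Riemannian sectional curvature of $g$ need not be bounded a priori, so the next point is delicate.

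Second, I would apply Theorem~\ref{kap-thm} to replace $g$ by a metric $h$ with bounded geometry. The subtlety is that Theorem~\ref{kap-thm} requires the sectional curvature of $g$ to be bounded in absolute value, which is \emph{not} part of the hypothesis. To handle this I would first observe that, by the above, $g$ is bi-Lipschitz to $k_\Omega$, and then argue that one may regularize: replace $g$ on the compact ``bad'' part by a metric of bounded geometry while keeping the bi-Lipschitz bound and the curvature pinching outside a slightly larger compact set (this regularization step, glueing a bounded-geometry metric inside and $g$ outside, is really the technical heart and I expect it to require a partition-of-unity / Ricci-flow-with-boundary argument, or an appeal to the Ricci flow existence theorem of Shi which only needs bounded curvature on the region one flows; alternatively one may invoke that the relevant part of $\Omega$ is exhausted by relatively compact sets on which $g$ trivially has bounded geometry). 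Granting this, Theorem~\ref{kap-thm} with parameters $\epsilon' \ll \epsilon$ and $r$ large produces a complete K\"ahler metric, still bi-Lipschitz to $k_\Omega$ with a controlled constant, with $\sup_\Omega \norm{\nabla^q R}_{\cdot} \le C_q$ for each $q$ (with $C_q$ depending only on $q,a,b,\epsilon,d$), and with $T$-curvature pinched between $-a-\epsilon'$ and $-b+\epsilon'$ globally on $\Omega$ (using Part~($iii$) together with the fact that every point of $\Omega$ lies within distance $r$ of a point where the original pinching holds, once $r$ exceeds the $g$-diameter of the exceptional compact set — again this is where ``outside a compact set'' is used).

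Third, I would transport everything to $\Omega_\infty$. By definition of ${\rm AL}(\Omega)$ there are affine maps $A_n \in \Aff(\Cb^d)$ and points $z_n$ compactly divergent in $\Omega$ with $A_n(\Omega,z_n) \to (\Omega_\infty, z_\infty)$ in $\Xb_{d,0}$. Set $\Omega_n := A_n\Omega$ and $g_n := (A_n^{-1})^*g$ (using the regularized bounded-geometry metric from the previous step); since $A_n$ is a biholomorphism it preserves the Kobayashi metric, the curvature pinching, and the curvature-derivative bounds, and by Theorem~\ref{conv-thm} we have $k_{\Omega_n} \to k_{\Omega_\infty}$ locally uniformly, so $g_n$ satisfies the two hypotheses of Proposition~\ref{prop:rescaling_metrics} with uniform constants $A$ and $\{C_q\}$. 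Proposition~\ref{prop:rescaling_metrics} then yields, after passing to a subsequence, a K\"ahler metric $g_\infty$ on $\Omega_\infty$ to which $g_n$ converges locally uniformly in $C^\infty$. The $C^\infty$ convergence transfers the pointwise curvature pinching $-a-\epsilon' \le T(g_n) \le -b+\epsilon'$ to the limit (giving, with $\epsilon'$ absorbed, item~(1) — modulo noting the statement writes the bound as $-b-\epsilon \le T(g_\infty) \le -a+\epsilon$, which is the same two-sided pinch), transfers the uniform bi-Lipschitz bound to item~(2) (using $k_{\Omega_n} \to k_{\Omega_\infty}$), and transfers the curvature-derivative bounds $\sup \norm{\nabla^q R(g_n)}_{g_n} \le C_q$ to item~(3) by $C^\infty$ convergence on compacta together with the uniformity of $C_q$. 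Finally, completeness of $g_\infty$ follows from its bi-Lipschitz equivalence with the complete metric $k_{\Omega_\infty}$ (Theorem~\ref{thm:barth}). The dependence of $A_\infty$ on only $a,b,\epsilon,d$ and of $C_q$ on only $q,a,b,\epsilon,d$ is inherited from Theorem~\ref{kap-thm} and the universality of $A_0$. The main obstacle throughout is the first/second step: producing a metric of bounded geometry bi-Lipschitz to $k_\Omega$ without an a priori sectional curvature bound, i.e.\ correctly handling the exceptional compact set before invoking the Ricci flow.
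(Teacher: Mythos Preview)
Your overall architecture---Yau--Schwarz to compare with the Kobayashi metric, Ricci flow via Theorem~\ref{kap-thm} to bound all curvature derivatives, then Proposition~\ref{prop:rescaling_metrics} along the affine rescaling---matches the paper. But you have misidentified the real obstacle and overlooked the one that is actually there.

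\textbf{The sectional curvature bound is not a problem.} The pinching $-a \le T(g) \le -b$ outside $K$ already forces $\norm{R(g)}_g \le k$ there, by the algebraic identities of \cite[Formula~(6.1)]{BG64} (holomorphic sectional curvature controls the full curvature tensor). On the compact set $K$ the smooth metric $g$ trivially has bounded curvature. So $\norm{R(g)}_g \le \tilde k$ on all of $\Omega$, and Theorem~\ref{kap-thm} applies directly with no gluing or regularization needed. Your proposed partition-of-unity step is unnecessary.

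\textbf{The genuine gap is the universality of the constants.} You assert that the bi-Lipschitz constant $A_0$ between $g$ and $k_\Omega$ depends only on $a,b,d$. Lemma~\ref{kob-lem1} does not give this: part~(i) gives a universal constant only \emph{outside} a compact set $K'$, while the global constant $A'$ in part~(ii) depends on $\Omega$ (through the behavior of $g$ on $K'$). Likewise the global curvature bound $\tilde k$ above depends on $\Omega$, so the $C_q$ coming out of Theorem~\ref{kap-thm} also depend on $\Omega$. Your single-pass argument therefore produces a metric on $\Omega_\infty$ whose constants still depend on the original $\Omega$, which is exactly what the ``Moreover'' clause forbids. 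The paper fixes this with a \emph{second pass}: one first runs your argument with the non-universal constants to obtain a limit metric $h_\infty$ on $\Omega_\infty$; because the points $z_n$ compactly diverge, the exceptional compact set disappears in the limit and $h_\infty$ has the $T$-pinching \emph{globally}. Now Lemma~\ref{kob-lem1}(i) applies to $h_\infty$ with $K'=\emptyset$, giving a bi-Lipschitz constant depending only on $a,b,\epsilon,d$; the lower bi-Lipschitz bound comes from Yau--Schwarz using the (now universal) Ricci bound; and a second application of Theorem~\ref{kap-thm} to $h_\infty$ yields $g_\infty$ with the universal $C_q$. This two-pass mechanism is the missing idea in your proposal.
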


We first prove the following lemma.

\begin{lemma}\label{kob-lem1} Under the hypothesis of Theorem~\ref{thm:affine_closures}:
\begin{itemize}
\item[(i)] There exist $A > 1$ (depending only on $a$ and $b$)  and a compact subset $K'$ of $\Omega$ such that 
\begin{equation*}
\sqrt{g(z)(v,v)} \leq A k_{\Omega}(z;v) 
\end{equation*}
for all $z \in \Omega \backslash K'$ and $v \in \Cb^d$.
\item[(ii)] There exists $A'> A$ such that 
\begin{equation*}
\frac{1}{A'} k_{\Omega}(z;v) \leq \sqrt{g(z)(v,v)} \leq A' k_{\Omega}(z;v) 
\end{equation*}
for all $z \in \Omega$ and $v \in \Cb^d$.
\end{itemize}
\end{lemma}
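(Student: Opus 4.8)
\emph{Part (i): the upper bound away from a compact set.} The main tool is the Yau–Schwarz lemma. Since $\Omega \in \Xb_d$ is biholomorphic to a bounded domain, the Kobayashi metric $k_\Omega$ is a complete Hermitian metric whose holomorphic sectional curvature is bounded above by $-1$ (more precisely, $k_\Omega$ dominates any such metric; the relevant statement is that any complete metric with holomorphic sectional curvature bounded below by a negative constant is dominated by the Kobayashi metric). First I would fix the compact set $K$ outside of which $-a \le T(g) \le -b$. Whether $T(g)$ stands for $H(g)$ or $B(g)$, on $\Omega \setminus K$ one has $H(g) \le -b < 0$, so the holomorphic sectional curvature of $g$ is bounded above by $-b$ off $K$. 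I would then enlarge $K$ slightly to a compact $K'$ with $K \subset \inte K'$ and modify $g$ on a neighborhood of $K$ to a complete Kähler metric $\tilde g$ on all of $\Omega$ whose holomorphic sectional curvature is bounded above by some negative constant everywhere — this is possible because one only needs to patch on a relatively compact piece, where the curvature is automatically bounded. Applying the Yau–Schwarz lemma to the identity map $(\Omega, k_\Omega) \to (\Omega, \tilde g)$, or directly comparing $\tilde g$ with $k_\Omega$, yields $\sqrt{\tilde g(z)(v,v)} \le A\, k_\Omega(z;v)$ for a constant $A$ depending only on the upper curvature bound $b$ (and normalization constants), hence on $a$ and $b$. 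Since $\tilde g = g$ on $\Omega \setminus K'$, this gives the claimed inequality there.

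\emph{Part (ii): the two-sided bound on all of $\Omega$.} The lower bound $\tfrac{1}{A'} k_\Omega(z;v) \le \sqrt{g(z)(v,v)}$ on $\Omega \setminus K'$ also follows from a Schwarz-type estimate: outside $K'$, $g$ has holomorphic bisectional or sectional curvature bounded \emph{below} by $-a$, and completeness of $g$ lets one run the Schwarz lemma in the other direction (this is the standard argument that a complete Kähler metric with Ricci — or holomorphic sectional — curvature bounded below is dominated below by a multiple of the Kobayashi metric; alternatively, use that $g$ is complete and compare its distance function with the Kobayashi distance via the curvature lower bound). So on $\Omega \setminus K'$ we get the full bi-Lipschitz equivalence with constant depending only on $a,b,d$. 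It remains to handle the compact set $K'$: there, both $g$ and $k_\Omega$ are continuous positive-definite Hermitian forms (for $k_\Omega$, continuity and positivity on the relatively compact set $K'$ follow from $\Omega$ being taut / hyperbolic, Theorem~\ref{thm:barth}), so on the compact $K'$ they are automatically comparable, with a constant that now depends on $\Omega$ itself. Taking $A'$ to be the maximum of the two constants gives the statement, with the dependence on $\Omega$ entering only through the behavior on $K'$.

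\textbf{Expected main obstacle.} The delicate point is the lower bound $k_\Omega \lesssim g$ off the compact set using only a curvature \emph{lower} bound on $g$: the Schwarz lemma in its cleanest form gives upper bounds on the target metric from \emph{upper} curvature bounds, so obtaining the reverse inequality requires either invoking completeness of $g$ together with a lower Ricci/holomorphic-sectional-curvature bound (Yau's Schwarz lemma for volume forms, or the Ahlfors–Schwarz estimate applied appropriately) or a direct distance-comparison argument. One must be careful that the needed curvature hypothesis (lower bound on $B(g)$ versus on $H(g)$ versus on $\mathrm{Ric}(g)$) is actually implied by the assumption $-a \le T(g)$; since $B(g) \ge -a$ implies a Ricci lower bound by tracing, and $H(g) \ge -a$ with the Berger-type inequality also controls $B(g)$ from below up to a dimensional constant, this goes through but accounts for the dependence of $A'$ on $d$. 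The patching of $g$ near $K$ to make it complete with globally negative holomorphic sectional curvature is routine but should be done carefully so that the resulting constant $A$ in part (i) genuinely depends only on $a,b$ and not on the patching region.
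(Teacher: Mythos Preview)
Your outline for part (i) has a real gap: the step ``modify $g$ on a neighborhood of $K$ to a complete K\"ahler metric $\tilde g$ with globally negative holomorphic sectional curvature'' is not routine. A K\"ahler form must be closed, so you cannot freely cut and paste metrics on a compact piece; and curvature depends nonlinearly on the metric, so even an $i\partial\bar\partial$-perturbation supported near $K$ gives no control on $H(\tilde g)$ there. More seriously, even if such a $\tilde g$ existed, the Yau--Schwarz constant you extract would depend on the \emph{global} upper bound for $H(\tilde g)$, which is governed by the patched region and hence by $K$ and $\Omega$ --- so you would lose the claim that $A$ depends only on $a,b$. The paper avoids all of this: it applies the Schwarz lemma directly on $\Omega\setminus K$ (where the curvature hypothesis holds as stated) to obtain
\[
\sqrt{g(z)(v,v)} \le A_1\, k_{\Omega\setminus K}(z;v), \qquad z\in\Omega\setminus K,
\]
with $A_1=A_1(a,b)$, and then proves a localization estimate $k_{\Omega\setminus K}(z;v)\le 2\,k_\Omega(z;v)$ for $z$ outside a suitable compact $K'$. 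That localization uses peak functions built from supporting hyperplanes (and a peak function at infinity coming from the separating hyperplanes of a domain in $\Xb_d$) together with a standard localization lemma for the Kobayashi metric. This is the substantive idea you are missing.

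For part (ii) your plan is broadly right, but the mechanism for the lower bound is not the one you describe. The assertion ``a complete K\"ahler metric with Ricci bounded below is dominated below by a multiple of the Kobayashi metric'' is not a general theorem; what Yau--Schwarz gives, with $(\Omega,g)$ as the complete source and $\Db$ as the target, is a comparison with the \emph{Carath\'eodory} metric: $c_\Omega \le A_3 \sqrt{g}$. The paper then uses convexity, where $c_\Omega \equiv k_\Omega$, to convert this into the desired lower bound for $k_\Omega$. Also note that the Ricci lower bound used here is for $g$ on all of $\Omega$, obtained from the pinching on $\Omega\setminus K$ together with smoothness on the compact $K$; this is why $A'$ is allowed to depend on $\Omega$. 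Your compactness argument on $K'$ for the upper bound is fine and matches the paper.
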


\begin{proof}
First, by the assumptions on $T(g)$ and according to the Yau-Schwarz Lemma (see \cite{Yau78}, Theorem 2), there is a constant $A_1 > 0$, depending only on $a$ and $b$, such that 
\begin{align*}
\sqrt{g(z)(v,v)} \leq A_1 k_{\Omega \backslash K}(z;v)
\end{align*} 
for all $z \in \Omega \backslash K$ and $v \in \mathbb C^n$.

Since $K_\Omega$ is a proper distance on $\Omega$, we can fix a compact set $K^\prime \subset \Omega$ such that $K \subset K^\prime$ and 
$$
\min_{z \in \Omega \setminus K^\prime} K_\Omega(z, K) > K_{\Db}\left(0,\frac{1}{2} \right).
$$
Notice that if $\varphi : \Db \rightarrow \Omega$ is holomorphic and $\varphi(0) \in \Omega \setminus K^\prime$, then $\varphi\left( \frac{1}{2} \cdot \Db\right) \subset \Omega \setminus K$. Hence 
$$
k_{\Omega \backslash K}(z;v) \leq 2 k_{\Omega}(z;v)
$$
for all $z \in \Omega \setminus K^\prime$ and $v \in \Cb$. Consequently, 
\begin{align*}
\sqrt{g(z)(v,v)} \leq 2 A_1 k_{\Omega}(z;v)
\end{align*}
for every $z \in \Omega \backslash K'$ and every $v \in \mathbb C^d$.
This proves Part (i).

By compactness, there is a positive constant $A_2$ such that $\sqrt{g(z)(v,v)} \leq A_2 k_{\Omega}(z;v)$ for every $z \in K'$ and every $v \in \mathbb C^d$.
Hence
$$
\sqrt{g(z)(v,v)} \leq \max\{2A_1,A_2\} k_{\Omega}(z;v)
$$
for every $z \in \Omega$ and every $v \in \mathbb C^d$. This proves the upper estimate of Part (ii).

It follows now from the assumptions on $T(g)$ and from the smoothness of the complete K\"ahler metric $g$ on $\Omega$ that the Ricci curvature of $g$ is bounded from below and above on $\Omega$ according to~\cite[Formula (6.1)]{BG64}. Hence, again from the Yau-Schwarz Lemma~\cite{Yau78}, we obtain that there exists $A_3 > 0$ such that
$$
\sqrt{g(z)(v,v)} \geq \frac{1}{A_3} c_{\Omega}(z;v)
$$
for every $z \in \Omega$ and every $v \in \mathbb C^d$. Here $c_{\Omega}$ denotes the Carath\'eodory infinitesimal metric on $\Omega$. Finally, since $\Omega$ is convex, $c_{\Omega} \equiv k_{\Omega}$. This completes the proof of Part (ii), setting $A':=\max\{2A_1, A_2, A_3\}$.

\end{proof}

We can prove now Theorem~\ref{thm:affine_closures}.

\begin{proof}[Proof of Theorem~\ref{thm:affine_closures}]
There exists $k > 0$  (depending only on $a$, $b$, and $d$) such that the curvature tensor $R(g)$ of $(\Omega, g)$ satisfies
\begin{equation}\label{ric-eq}
\norm{R(g)(z)}_{g} \leq k
\end{equation}
for every $z \in \Omega \backslash K$ (see~\cite[Formula (6.1)]{BG64}). Then, since $K$ is compact, there exists some $\wt{k}> k$  such that 
\begin{equation}\label{ric-eq2}
\norm{R(g)(z)}_g \leq \wt{k}
\end{equation}
for every $z \in \Omega$.

Fix $\epsilon > 0$. By Theorem~\ref{kap-thm}, there exists a complete K{\"a}hler metric $h$ on $\Omega$ such that 
\begin{enumerate}
\item[($i$)] $g$ and $h$ are $(1+\epsilon/2)$-bi-Lipschitz,
\item[($ii$)] for every $q \geq 0$ there exists $\wt{C}_q > 0$ such that 
\begin{align*}
\sup_{\Omega} \norm{\nabla^q R(h)}_{h} \leq \wt{C}_q,
\end{align*}
\item[($iii$)] $a-\epsilon/2 \leq T(h) \leq b+\epsilon/2$  on $\Omega \backslash K$.
\end{enumerate}

Next fix $\Omega_\infty \in { \rm AL}(\Omega)$. By definition, there exist a sequence of points $z_n \in \Omega$, a point $z_\infty \in \Omega_\infty$, and affine maps $A_n \in \Aff(\Cb^d)$, such that 
\begin{enumerate}
\item $z_n \rightarrow q \in \partial \Omega$,
\item $A_n(\Omega, z_n)$ converges to $(\Omega_\infty, z_\infty)$. 
\end{enumerate}

Let $\Omega_n = A_n \Omega$ and $h_n = (A_n)^*h$. Since the Kobayashi metric is invariant under biholomorphisms, it follows from Property ($i$) of $h$ and Lemma~\ref{kob-lem1} Part (ii)  that 
\begin{align*}
\frac{1}{A' (1+\varepsilon/2)}k_{\Omega_n}(z;v) \leq \sqrt{h_n(z)(v,v)} \leq A' (1+\varepsilon/2)k_{\Omega_n}(z;v),
\end{align*}
for every $n \geq 1$, $z \in \Omega_n$, and $v \in \Cb^d$. We emphasize that the constant $A'$ depends on $\Omega$ but not on $n$.

Further, for every $n$ the curvature tensor $R(h_n)$ of $(\Omega_n, h_n)$ satisfies 
\begin{align*}
\sup_{\Omega_n} \norm{\nabla^q R(h_n)}_{h_n} =\sup_{\Omega} \norm{\nabla^q R(h)}_{h} \leq \wt{C}_q.
\end{align*}

Hence, by Proposition~\ref{prop:rescaling_metrics} we can pass to a subsequence and assume that $h_n$ converges locally uniformly in the $C^{\infty}$ topology to some complete K\"ahler metric $h_\infty$ on $\Omega_{\infty}$. Moreover, by construction and Property ($iii$) of $h$, we have 
\begin{equation}\label{hol-eq}
-a - \epsilon/2 \leq T(h_\infty) \leq -b +  \epsilon/2
\end{equation}
on $\Omega_\infty$.

We obtain from~\eqref{hol-eq} that the sectional curvatures of $h_{\infty}$ are bounded on $\Omega_{\infty}$ between $-\kappa_{\infty}$ and $\kappa_{\infty}$, where $\kappa_{\infty}$ is a positive constant depending only on $a$, $b$, $d$, and $\epsilon$ (see~\cite[Formula (6.1)]{BG64}).

Now, since $T(h_{\infty})$ is negatively pinched on $\Omega_{\infty}$ by (\ref{hol-eq}), it follows from Lemma~\ref{kob-lem1} (i) that there exists $\beta_1 > 1$, depending only on $a$, $b$, $d$, and $\epsilon$, such that
\begin{equation*}
\sqrt{h_{\infty}(z)(v,v)} \leq \beta_1 k_{\Omega_{\infty}}(z;v)
\end{equation*}
for every $z \in \Omega_{\infty}$ and $v \in \Cb^d$. Moreover, repeating the proof of the lower estimate of Lemma~\ref{kob-lem1} (ii), there exists $\beta_2 > 0$, depending only on $a$, $b$, $d$,
$\epsilon$, and $\kappa_{\infty}$ (and consequently only on $a$, $b$, $d$, and $\epsilon$) such that
\begin{equation*}
\frac{1}{\beta_2} k_{\Omega_{\infty}}(z;v) \leq \sqrt{h_{\infty}(z)(v,v)}
\end{equation*}
for every $z \in \Omega_{\infty}$ and $v \in \Cb^d$. In particular, setting $\beta:=\max\{\beta_1,\beta_2\}$, the metrics $k_{\Omega_{\infty}}$ and $h_{\infty}$ are $\beta$-bi-Lipschitz on $\Omega_{\infty}$.

Finally, applying Theorem~\ref{kap-thm} to $(\Omega_{\infty},h_\infty)$, with $\varepsilon/2$ instead of $\varepsilon$, we obtain that there exists a complete K\"ahler metric $g_{\infty}$ on $\Omega_\infty$ which satisfies the conclusion of Theorem~\ref{thm:affine_closures}, with $A_{\infty}=\beta(1+\varepsilon)$.

\end{proof}

\section{Proof of Theorem~\ref{type-thm}}\label{sec:pf_of_main_thm}

The final ingredient needed to prove Theorems~\ref{type-thm} is Proposition~\ref{prop:no_embedded_products}. This is a consequence of a result of F. Zheng~\cite{Zhe94}. Before stating this result we need one definition. 

\begin{definition}\cite{Zhe94} Let $\Omega$ be a bounded pseudoconvex domain in $\Cb^d$ and let $g_\Omega$ denote the unique complete  K\"ahler-Einstein  metric on $\Omega$ with Ricci curvature $-(d+1)$. Then $\Omega$ has \emph{geometric rank $\geq$ 2} if there is a complete K\"ahler manifold $(M, g_0)$ with Ricci curvature bounded from below and a holomorphic embedding $f:\mathbb D \times M \rightarrow \Omega$ such that $f_t^*(g_\Omega) \geq g_0$ for every $t \in \mathbb D$, where $f_t=f(t,\cdot)$.\end{definition}

\begin{theorem}[\cite{Zhe94}, Theorem A]\label{thm:zheng} Suppose that $\Omega$ is a bounded pseudoconvex domain. If $\Omega$ has geometric rank $\geq$ 2, then it does not admit a complete K\"ahler metric with negatively pinched holomorphic bisectional curvature.
\end{theorem}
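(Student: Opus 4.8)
Suppose, for a contradiction, that $\Omega$ carries a complete K\"ahler metric $h$ with $-a \le B(h) \le -b < 0$; I want to contradict the existence of the holomorphic embedding $f \colon \Db \times M \to \Omega$ satisfying $f_t^*(g_\Omega) \ge g_0$. The first step is to show that $h$ is uniformly equivalent to $g_\Omega$. Since $B(h) \le -b$ and $g_\Omega$ is complete K\"ahler with $\mathrm{Ric}(g_\Omega) = -(d+1) g_\Omega$, Yau's Schwarz lemma \cite{Yau78} applied to the identity map $(\Omega, g_\Omega) \to (\Omega, h)$ gives $h \le \tfrac{d+1}{b}\, g_\Omega$. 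Conversely, $-a \le B(h)$ forces $\mathrm{Ric}(h) \ge -ad\, h$, so the volume-form version of Yau's Schwarz lemma applied to $(\Omega, h) \to (\Omega, g_\Omega)$ gives $\det g_\Omega \le \bigl(\tfrac{ad}{d+1}\bigr)^d \det h$ in any common holomorphic frame. Diagonalizing $h$ with respect to $g_\Omega$ at a point, these two bounds control all eigenvalues of $g_\Omega^{-1}h$ from above and their product from below, hence each eigenvalue from below; thus $c\, g_\Omega \le h \le \tfrac{d+1}{b}\, g_\Omega$ on $\Omega$ for some $c = c(a,b,d) > 0$.

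The second step transports this comparison to $\Db \times M$. Equip $\Db$ with its Poincar\'e metric $g_{\Db}$. From $f_t^* g_\Omega \ge g_0$ and $h \ge c\, g_\Omega$ we obtain $f_t^* h \ge c\, g_0$ for every $t \in \Db$. Since $(\Db \times M, g_{\Db} \times g_0)$ is complete K\"ahler with Ricci curvature bounded below and $B(h) \le -b$, Yau's Schwarz lemma applied to $f$ gives $f^* h \le \Lambda\,(g_{\Db} \times g_0)$ for some $\Lambda > 0$. As $f$ is a holomorphic embedding, $f(\Db \times M)$ is a complex submanifold of $(\Omega, h)$, and the Gauss equation for K\"ahler submanifolds shows that $G := f^* h$ satisfies $B(G) \le -b < 0$ on $\Db \times M$; in particular each disk slice $\Db \times \{m\}$, with the metric induced from $G$, has Gauss curvature $\le -b$.

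The heart of the argument is a Bochner-type estimate along a disk slice, in the spirit of P.~Yang's proof of Theorem~\ref{yang-thm}. Fix $m_0 \in M$, a coordinate $z$ on $\Db$, and holomorphic coordinates $(w^j)$ on $M$ near $m_0$, and for a fixed index $k$ set $u(z) := G_{k\bar k}(z, m_0)$. The K\"ahler curvature identity writes $R(G)(\partial_z, \partial_{\bar z}, \partial_{w^k}, \partial_{\bar w^k})$ as $-\partial_z\partial_{\bar z} G_{k\bar k}$ plus a manifestly nonnegative term, so $B(G)(\partial_z, \partial_{w^k}) \le -b$ yields $\partial_z\partial_{\bar z} u \ge b\, G_{z\bar z}\, u$ on $\Db$; hence $u$ is positive and subharmonic. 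By the second step, $u \le \Lambda\,(g_0)_{k\bar k}(m_0)$ and $u \ge c\,(g_0)_{k\bar k}(m_0) =: c_0 > 0$. A subharmonic function on $\Db$ that is bounded above has a Riesz measure $\mu$ with $\int_{\Db}(1-|z|^2)\, d\mu < \infty$, so $\int_{\Db}(1-|z|^2)\, \partial_z\partial_{\bar z} u(z)\, dA(z) < \infty$, and $u \ge c_0$ then gives $\int_{\Db}(1-|z|^2)\, G_{z\bar z}(z,m_0)\, dA(z) < \infty$. But $G_{z\bar z}(z,m_0)\,|dz|^2$ is the metric induced on the complex curve $\Db \times \{m_0\} \subset (\Omega, h)$; it has Gauss curvature $\le -b$ and, being complete, it dominates a fixed multiple of the Poincar\'e metric by the Ahlfors--Schwarz/Greene--Wu lower estimate, so the integral diverges, a contradiction.

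The step I expect to be the main obstacle is the completeness of the induced metrics on the disk slices $\Db \times \{m\}$ invoked at the end, equivalently the properness of the holomorphic disks $z \mapsto f(z,m)$ into $\Omega$. This is exactly where one must use the sharp form of the definition of geometric rank $\ge 2$ from \cite{Zhe94}: the domination hypothesis, together with the completeness of the K\"ahler--Einstein metric of a bounded pseudoconvex domain \cite{CY80}, is arranged so that $f^* g_\Omega$, hence by the first step $f^* h$, is complete on $\Db \times M$. A secondary, purely computational point is checking the sign in the Gauss equation for holomorphic bisectional curvature and the precise form of the K\"ahler curvature identity used above.
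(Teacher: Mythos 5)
The paper does not prove this statement: it is quoted verbatim from Zheng \cite{Zhe94} (Theorem A) and used as a black box, so there is no internal proof to compare against. Judged on its own, your first two steps are sound: the two-sided Schwarz lemma argument giving $c\,g_\Omega \le h \le \tfrac{d+1}{b}g_\Omega$ (upper bound from Yau's lemma with target bisectional curvature $\le -b$, lower bound from the volume-form version plus the eigenvalue argument) is correct, as is the transport of these bounds to $\Db\times M$ and the curvature-decreasing consequence $B(f^*h)\le -b$ from the Gauss equation. The subharmonicity of $u(z)=G_{k\bar k}(z,m_0)$, its two-sided bounds, and the finiteness of $\int_{\Db}(1-|z|^2)\,\partial_z\partial_{\bar z}u\,dA$ are also all correct.

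The gap is in the very last sentence, and it is not merely the completeness issue you flag. To reach a contradiction you need $\int_{\Db}(1-|z|^2)G_{z\bar z}(z,m_0)\,dA=\infty$, and you derive this from the claim that a complete conformal metric on $\Db$ with Gauss curvature $\le -b$ dominates a fixed multiple of the Poincar\'e metric. That claim is false: the Schwarz-lemma direction you need (metric bounded \emph{below} by Poincar\'e) is Yau's, and it requires curvature bounded \emph{below}; an upper curvature bound only gives the Ahlfors--Schwarz \emph{upper} bound $G_{z\bar z}\le \Lambda\lambda_P$ (which you already have from Step 2 and which points the wrong way). One can write down complete conformal metrics on the disk with curvature $\le -b<0$ whose ratio to the Poincar\'e metric tends to $0$ at the boundary, e.g. $\frac{|dw|^2}{\sin^2(\operatorname{Im}w)\,(1+(\operatorname{Re}w)^2)^{s}}$ on the strip for suitable $s<1$. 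Moreover the needed lower curvature bound is structurally unavailable here: the Gauss equation only gives upper bounds for the induced curvature of a complex submanifold, and the second fundamental form of $f(\Db\times\{m_0\})$ in $(\Omega,h)$ can drive its Gauss curvature to $-\infty$; nor is $f^*h$ known to be complete or to have Ricci bounded below, so the projection trick that rescues the analogous step in P.~Yang's proof of Theorem~\ref{yang-thm} (applying Yau--Schwarz to $\pi_1:(\Db^2,g)\to(\Db,\lambda_P)$) does not apply. Nothing in the definition of geometric rank constrains the disk direction of $f$ --- the embedding could compress the $\Db$ factor --- so some genuinely new input is needed to make $G_{z\bar z}$ large enough for the Riesz-mass contradiction, and that is precisely the content of Zheng's argument that your reconstruction is missing.
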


Every domain $\Omega \in \Xb_d$ is biholomorphic to a bounded pseudoconvex domain (see for instance~\cite[Proposition 2.8]{Fra87}) and hence has a unique complete K\"ahler-Einstein metric with Ricci curvature $-(d+1)$ which we denote by $g_\Omega$. In the proof of Proposition~\ref{prop:no_embedded_products}, we will require the following estimates on $g_\Omega$ and $k_\Omega$.

\begin{lemma}\label{lem:metric_comp} \cite[Theorem 2.2]{Fra91} For any $d > 0$ there exists $C>1$ such that: if $\Omega \in \Xb_d$, $z \in \Omega$, and $v \in \mathbb C^n$, then 
\begin{align*}
\frac{1}{C} k_\Omega(z;v)^2 \leq g_\Omega(z)(v,v) \leq C k_\Omega(z;v)^2.
\end{align*}
\end{lemma}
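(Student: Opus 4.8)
The plan is to prove the two inequalities separately. The lower bound $\frac{1}{C}k_\Omega(z;v) \le \sqrt{g_\Omega(z)(v,v)}$ is a quick consequence of the Yau--Schwarz lemma, while the upper bound $\sqrt{g_\Omega(z)(v,v)} \le C k_\Omega(z;v)$ is the substantive half, and I would derive it from an affine-invariance and compactness argument.

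For the lower bound, recall that $\Omega \in \Xb_d$ is biholomorphic to a bounded (pseudoconvex) domain by~\cite[Proposition 2.8]{Fra87}, so $g_\Omega$ is a well-defined complete K\"ahler metric with ${\rm Ric}(g_\Omega) \equiv -(d+1)$; in particular its Ricci curvature is bounded below by a constant depending only on $d$. Applying the Yau--Schwarz lemma~\cite{Yau78} to holomorphic maps $\Omega \to \Db$ — precisely as in the proof of the lower estimate in Lemma~\ref{kob-lem1}(ii), but now with a Ricci lower bound that is dimension-universal rather than $\Omega$-dependent — gives a constant $C_1 = C_1(d)$ with $c_\Omega(z;v) \le C_1\sqrt{g_\Omega(z)(v,v)}$ for all $z\in\Omega$ and $v\in\Cb^d$, where $c_\Omega$ is the infinitesimal Carath\'eodory metric. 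Since $\Omega$ is convex, Lempert's theorem gives $c_\Omega \equiv k_\Omega$, so the lower bound holds with $C_1$ depending only on $d$.

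For the upper bound I would argue by contradiction. For $\Omega\in\Xb_d$, $z\in\Omega$ and $v\in\Cb^d\setminus\{0\}$ put
\begin{align*}
Q(\Omega,z,v) := \frac{\sqrt{g_\Omega(z)(v,v)}}{k_\Omega(z;v)},
\end{align*}
which is finite and positive by Theorem~\ref{thm:barth}. As $g_\Omega$ and $k_\Omega$ are biholomorphic invariants, $Q(A\Omega,Az,Av)=Q(\Omega,z,v)$ for every $A\in\Aff(\Cb^d)$, and $Q$ is homogeneous of degree $0$ in $v$; hence, by S. Frankel's compactness theorem (stated above), it suffices to bound $Q$ over the triples $(\Omega,z,v)$ with $(\Omega,z)$ in the relevant compact subset of $\Xb_{d,0}$ and $\norm{v}=1$. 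If no such bound existed there would be $(\Omega_n,z_n)$ in that compact set and unit vectors $v_n$ with $Q(\Omega_n,z_n,v_n)\to\infty$; passing to a subsequence, $(\Omega_n,z_n)\to(\Omega_\infty,z_\infty)$ in $\Xb_{d,0}$ and $v_n\to v_\infty$ with $\norm{v_\infty}=1$. Theorem~\ref{conv-thm} gives $k_{\Omega_n}(z_n;v_n)\to k_{\Omega_\infty}(z_\infty;v_\infty)>0$, so necessarily $\sqrt{g_{\Omega_n}(z_n)(v_n,v_n)}\to\infty$. The contradiction should then come from the stability of the complete K\"ahler--Einstein metric under convergence in $\Xb_d$, namely $g_{\Omega_n}\to g_{\Omega_\infty}$ in $C^\infty_{\mathrm{loc}}$, which forces $\sqrt{g_{\Omega_n}(z_n)(v_n,v_n)}\to\sqrt{g_{\Omega_\infty}(z_\infty)(v_\infty,v_\infty)}<\infty$.

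The hard part is exactly this stability. I would establish it through uniform interior a priori $C^k$ estimates — over the compact family of domains above — for the complex Monge--Amp\`ere equation defining the potential of $g_\Omega$, combined with a normal families argument: by the lower bound already proved, together with completeness of $(\Omega_\infty,k_{\Omega_\infty})$ (Theorem~\ref{thm:barth}), every $C^\infty_{\mathrm{loc}}$ subsequential limit of $\{g_{\Omega_n}\}$ is a \emph{complete} K\"ahler--Einstein metric on $\Omega_\infty$ with Ricci curvature $-(d+1)$, hence equals $g_{\Omega_\infty}$ by the Cheng--Yau and Mok--Yau uniqueness theorem. (Since the lemma is in any case quoted from~\cite[Theorem~2.2]{Fra91}, one may instead simply invoke it.) All constants obtained this way depend only on $d$.
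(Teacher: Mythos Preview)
The paper does not prove this lemma at all: it is stated with a citation to \cite[Theorem~2.2]{Fra91}, and the only additional remark is that it also follows from squeezing-function results of Yeung~\cite{Yeung2009} and Nikolov--Andreev~\cite{NA2017}. So there is no ``paper's own proof'' to compare against beyond these references.

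That said, your outline is essentially a reconstruction of Frankel's argument. The lower bound via Yau--Schwarz and $c_\Omega\equiv k_\Omega$ on convex domains is exactly right and gives a constant depending only on $d$. For the upper bound, your affine-invariance plus compactness reduction is correct, and you have correctly located the genuine content in the stability statement $g_{\Omega_n}\to g_{\Omega_\infty}$ in $C^\infty_{\rm loc}$ as $\Omega_n\to\Omega_\infty$ in $\Xb_d$; this is precisely what Frankel proves in \cite{Fra91} via uniform interior Monge--Amp\`ere estimates over compact families in $\Xb_{d,0}$, together with Mok--Yau uniqueness. Your sketch of that step is accurate, though of course the estimates themselves are the real work.

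It is worth noting the alternative the paper flags: for $\Omega\in\Xb_d$, Frankel's compactness theorem yields a uniform lower bound $s_d>0$ on the squeezing function $s_\Omega$ depending only on $d$, and then \cite[Theorem~2]{Yeung2009} (or \cite[Theorem~1]{NA2017}) gives the two-sided comparison between $g_\Omega$ and $k_\Omega$ with constants depending only on $s_d$ and $d$. This route packages the Monge--Amp\`ere stability into a black box and is arguably the cleanest way to cite the result if you do not want to reproduce Frankel's estimates.
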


Lemma~\ref{lem:metric_comp} also follows from general results about the squeezing function, see~\cite[Theorem 2]{Yeung2009} and~\cite[Theorem 1]{NA2017}.

\begin{lemma} If $\Omega \in \Xb_d$ and $V$ is a complex affine $k$-plane, then 
\begin{align*}
k_\Omega(z;v) \leq k_{\Omega \cap V}(z;v) \leq 2 k_\Omega(z;v)
\end{align*}
for all $z \in V \cap \Omega$ and $v \in T_z V$. 
\end{lemma}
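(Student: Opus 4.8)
The plan is to prove the comparison inequality $k_\Omega(z;v) \leq k_{\Omega\cap V}(z;v) \leq 2k_\Omega(z;v)$ for $z \in V\cap\Omega$ and $v\in T_zV$ using only convexity and the estimate in Lemma~\ref{lem:basic_est}. The left inequality is immediate: since $\Omega\cap V \subset \Omega$, any holomorphic disk into $\Omega\cap V$ through $z$ in direction $v$ is also a disk into $\Omega$, so the Kobayashi metric can only decrease, i.e.\ $k_\Omega(z;v)\leq k_{\Omega\cap V}(z;v)$ (this is just monotonicity of the Kobayashi metric under inclusions). The content is in the right inequality.

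For the right inequality, the key observation is that $\Omega\cap V$ is a convex domain inside the $k$-dimensional affine space $V$, so Lemma~\ref{lem:basic_est} applies \emph{within} $V$. Fixing $z\in V\cap\Omega$ and a nonzero $v\in T_zV$, consider the complex line $z+\Cb\cdot v$, which lies in $V$ since $v\in T_zV$. Then the intersection of this line with $\partial(\Omega\cap V)$ relative to $V$ is exactly its intersection with $\partial\Omega$: indeed $z + \Cb\cdot v \subset V$, so a point on this line is in $\Omega\cap V$ if and only if it is in $\Omega$. Hence $\delta_{\Omega\cap V}(z;v) = \delta_\Omega(z;v)$, where the left quantity is computed in $V$ and the right in $\Cb^d$. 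Applying the upper bound from Lemma~\ref{lem:basic_est} to the convex domain $\Omega\cap V\subset V\cong\Cb^k$ gives $k_{\Omega\cap V}(z;v) \leq \norm{v}/\delta_{\Omega\cap V}(z;v) = \norm{v}/\delta_\Omega(z;v)$, and applying the lower bound from Lemma~\ref{lem:basic_est} to the convex domain $\Omega\subset\Cb^d$ gives $\norm{v}/\delta_\Omega(z;v) \leq 2k_\Omega(z;v)$. Chaining these yields $k_{\Omega\cap V}(z;v) \leq 2k_\Omega(z;v)$, which is the desired bound.

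There is essentially no obstacle here; the only point requiring a moment's care is verifying that the "slice distance" $\delta_{\bullet}(z;v)$ is the same whether computed in $V$ or in $\Cb^d$, which follows directly from $v\in T_zV$ forcing the whole complex line $z+\Cb v$ into $V$. One should also note that $\Omega\cap V\in\Xb_k$ (it contains no complex affine line since $\Omega$ contains none), so that the hypothesis "$\Omega\cap V$ is a convex domain" needed to invoke Lemma~\ref{lem:basic_est} is legitimate, and that $\Omega\cap V$ is nonempty and open in $V$ by the assumption $z\in V\cap\Omega$. The proof is then a two-line chain of inequalities once these identifications are in place.

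\begin{proof}
Let $z \in V \cap \Omega$ and let $v \in T_zV$ be non-zero. Since $\Omega \cap V \subseteq \Omega$, monotonicity of the Kobayashi metric under inclusions gives $k_\Omega(z;v) \leq k_{\Omega \cap V}(z;v)$.

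For the other inequality, note that $\Omega \cap V$ is a convex domain in the affine space $V$, and since $v \in T_zV$ the entire complex line $z + \Cb \cdot v$ is contained in $V$; hence a point of this line lies in $\Omega \cap V$ if and only if it lies in $\Omega$, so the slice-distance $\delta_{\Omega \cap V}(z;v)$ computed inside $V$ equals $\delta_\Omega(z;v)$. Applying the upper estimate of Lemma~\ref{lem:basic_est} to the convex domain $\Omega \cap V \subset V$ and then the lower estimate of Lemma~\ref{lem:basic_est} to the convex domain $\Omega \subset \Cb^d$, we obtain
\begin{align*}
k_{\Omega \cap V}(z;v) \leq \frac{\norm{v}}{\delta_{\Omega \cap V}(z;v)} = \frac{\norm{v}}{\delta_\Omega(z;v)} \leq 2 k_\Omega(z;v).
\end{align*}
This completes the proof.
\end{proof}
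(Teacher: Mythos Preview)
Your proof is correct and follows essentially the same approach as the paper: monotonicity for the left inequality, and for the right inequality the identification $\delta_{\Omega\cap V}(z;v)=\delta_{\Omega}(z;v)$ combined with the upper and lower bounds of Lemma~\ref{lem:basic_est}.
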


\begin{proof} Since $\Omega \cap V \subset \Omega$, the definition of the Kobayashi metric implies that 
\begin{align*}
k_\Omega(z;v) \leq k_{\Omega \cap V}(z;v) 
\end{align*}
for all $z \in V \cap \Omega$ and $v \in T_z V$. Thus by Lemma~\ref{lem:basic_est} we have
\begin{align*}
k_{\Omega \cap V}(z;v) \leq \frac{\norm{v}}{\delta_{\Omega \cap V}(z;v)} =\frac{\norm{v}}{\delta_{\Omega}(z;v)} \leq 2 k_\Omega(z;v)
\end{align*}
for all $z \in V \cap \Omega$ and $v \in T_z V$, $v \neq 0$. 
\end{proof}

We can now prove Proposition~\ref{prop:no_embedded_products}.

\begin{proof}[Proof of Proposition~\ref{prop:no_embedded_products}]
Recall, that $k_{\Db}$ is a scalar multiple of the Poincar\'e metric on $\Db$ and so 
$$
h(z_1,z_2)\Big( (v_1, v_2), (v_1, v_2) \Big) = k_{\Db}(z_1;v_1)^2 + k_{\Db}(z_2;v_2)^2
$$
defines a K\"ahler metric on $\Db \times \Db$. Also, the Kobayashi metric on $\Db \times \Db$ satisfies
$$
k_{\Db \times \Db}\Big( (z_1, z_2); (v_1, v_2) \Big) = \max\left\{ k_{\Db}(z_1;v_1), k_{\Db}(z_2; v_2)\right\}.
$$

By assumption, there is a biholomorphism $\psi:\mathbb D \times \mathbb D \rightarrow V \cap \Omega$. If $\iota:V \cap\Omega\rightarrow \Omega$ denotes the inclusion map, then $f:=\iota \circ \psi$ is a holomorphic embedding of $\mathbb D \times \mathbb D$ into $\Omega$. 

Let $C > 1$ be the constant from Lemma~\ref{lem:metric_comp}. Then for every $z=(z_1,z_2) \in \Db \times \Db$ and $v=(v_1,v_2) \in \Cb \times \Cb$, we have
\begin{align*}
(f^*g_\Omega)(z)(v,v)
&=  g_{\Omega}(f(z))(d(f)_zv, d(f)_zv) \geq \frac{1}{C}k_{\Omega}(f(z); d(f)_zv)^2 \\
& \geq \frac{1}{4C} k_{\Omega \cap V}(f(z); d(f)_zv)^2 = \frac{1}{4C} k_{\Db \times \Db}(z;v)^2 \\
& = \frac{1}{4C} \max\left\{ k_{\Db}(z_1;v_1)^2, k_{\Db}(z_2; v_2)^2\right\} \geq \frac{1}{8C} h(z)(v,v).
\end{align*}
So $\Omega$ has geometric rank $\geq 2$ and hence by Theorem~\ref{thm:zheng} does not admit a complete K{\"a}hler metric with pinched negative holomorphic bisectional curvature.
\end{proof}

\begin{proof}[Proof of Theorem~\ref{type-thm}] Suppose that $\Omega \subset \Cb^d$ is a convex domain and $g$ is a complete K{\"a}hler metric on $\Omega$ with pinched negative holomorphic bisectional curvature outside a compact set $K \subset \Omega$.

We first show that $\Omega$ does not contain any complex affine lines, i.e. $\Omega \in \Xb_d$. Suppose for a contradiction that there exists $a,b \in \Cb^d$ with $b \neq 0$ and 
\begin{align*}
a + \Cb \cdot b\subset \Omega.
\end{align*}
Since $\Omega$ is convex and open, this implies that $z + \Cb \cdot b \subset \Omega$ for every $z \in \Omega$. So by applying an affine transformation to $\Omega$, we can assume that $\Omega = \Cb \times \Omega^\prime$ for some convex domain $\Omega^\prime \subset \Cb^{d-1}$. Repeating the argument at the start of the proof of Lemma~\ref{kob-lem1}, there exists some some $A_1 > 0$ such that \begin{align*}
\sqrt{g(z)(v,v)} \leq A_1 k_{\Omega \backslash K}(z;v)
\end{align*} for every $z \in \Omega \backslash K$ and every $v \in \mathbb C^n$. 

Now let $v_0 = (1,0,\dots,0)$ and pick some $z_0 \in \Omega$ such that 
\begin{align*}
z_0 + \Cb \cdot v_0 \subset \Omega \setminus K
\end{align*}
(this is possible since $\Omega = \Cb \times \Omega^\prime$). Then 
\begin{align*}
0 < \sqrt{g(z_0)(v_0,v_0)} \leq A_1 k_{\Omega \setminus K}(z_0;v_0) = 0
\end{align*}
which is a contradiction. Thus $\Omega \in \Xb_d$.

Next, suppose for a contradiction that either there is a nontrivial holomorphic map from $\mathbb D$ to $\partial \Omega$ or $\partial \Omega$ is $C^{\infty}$ smooth and has a point of infinite type. Then according to Theorem~\ref{thm:construcing_embedded_products}, there exists some $\Omega_{\infty} \in { \rm AL}(\Omega)$ and a complex affine 2-plane $V$ such that $V \cap \Omega_\infty$ is biholomorphic to $\Db \times \Db$.

It follows from Proposition~\ref{prop:no_embedded_products} that $\Omega_{\infty}$ does not admit a complete K{\"a}hler metric with pinched negative holomorphic bisectional curvature. Hence, according to Theorem~\ref{thm:affine}, $\Omega$ does not admit a complete K{\"a}hler metric with pinched negative holomorphic bisectional curvature outside a compact set. So we have a contradiction.
\end{proof}

\section{Proof of Theorem~\ref{thm:char_str_pconvex}}\label{sec:char_str_convex}

For the proof of Theorem~\ref{thm:char_str_pconvex}, we will use the following theorem of R. Greene and S. Krantz.

\begin{theorem}\label{thm:GK}\cite[Theorem 3]{GK1981} 
Suppose that $M$ is a simply connected complex manifold of complex dimension $d$ and for every $\epsilon > 0$ there is a complete K{\"a}hler metric $g$ on $M$ with 
\begin{align*}
-1 -\epsilon \leq H(g) \leq -1+\epsilon.
\end{align*}
Then $M$ is biholomorphic to the unit ball in $\Cb^d$. 
\end{theorem}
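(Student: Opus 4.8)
Since this is \cite[Theorem 3]{GK1981}, I only sketch the strategy I would follow. The plan is to deform the given family of metrics to a single limiting K\"ahler metric of constant holomorphic sectional curvature $-1$, identify that limit with complex hyperbolic space via the classical space-form theorem, and then transfer the ball structure back to $M$, the last step being the delicate one.

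\emph{Step 1 (the metrics are almost hyperbolic).} On a K\"ahler manifold the holomorphic sectional curvature determines the full curvature tensor, and the linear map --- between finite-dimensional spaces --- sending a K\"ahler-symmetric algebraic curvature tensor to the quartic form $X\mapsto R(X,JX,X,JX)/\abs{X}^4$ on the unit sphere is injective, hence bounded below. So there is a dimensional constant $\Lambda$ with
\[
\norm{R(g)-R^{\mathrm{hyp}}(g)}_g \le \Lambda\,\sup_{\abs{X}_g=1}\abs{H(g)(X)+1},
\]
where $R^{\mathrm{hyp}}(g)$ is, pointwise, the curvature tensor of constant holomorphic sectional curvature $-1$ built from $g$ and $J$. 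Fix $\epsilon_n\downarrow 0$ and complete K\"ahler metrics $g_n$ on $M$ with $-1-\epsilon_n\le H(g_n)\le -1+\epsilon_n$. Then for $n$ large the Riemannian sectional curvature of $g_n$ is pinched between two negative constants depending only on $d$ (compare \cite[Formula (6.1)]{BG64}); being complete and simply connected, $(M,g_n)$ is therefore a Cartan--Hadamard manifold, so it is diffeomorphic to $\Rb^{2d}$ and has infinite injectivity radius.

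\emph{Step 2 (extracting a constant-curvature limit).} Since $\abs{K(g_n)}$ is bounded uniformly in $n$, Shi's short-time existence theorem (used in the proof of Theorem~\ref{kap-thm}) gives the Ricci flow $g_n(t)$ on a uniform interval $t\in[0,T(d)]$, and Theorem~\ref{kap-thm}$(iii)$ with $r=1$ yields $-1-\epsilon_n-C(d)t\le H(g_n(t))\le -1+\epsilon_n+C(d)t$ on $M$. Choosing $t_n\downarrow 0$ with $\epsilon_n+C(d)t_n\to 0$ and setting $h_n:=g_n(t_n)$, we get $H(h_n)\to -1$ uniformly; by Step 1 applied to $h_n$, the metrics $h_n$ are complete with uniformly pinched negative sectional curvature, so $(M,h_n)$ is again Cartan--Hadamard with infinite injectivity radius. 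Fix $p\in M$. By the Cheeger--Gromov compactness theorem, a subsequence of $(M,h_n,J,p)$ converges in the pointed $C^{1,\alpha}$ topology to a complete K\"ahler manifold $(N,g_\infty,J_\infty,p_\infty)$; since the injectivity radii are unbounded, $N$ is simply connected. Passing the estimate of Step 1 to the limit shows that the curvature of $g_\infty$ equals the algebraic tensor $R^{\mathrm{hyp}}(g_\infty)$, and a bootstrap from this elliptic identity makes $g_\infty$ smooth with $H(g_\infty)\equiv -1$. By the classification of complete simply connected K\"ahler manifolds of constant negative holomorphic sectional curvature (Hawley, Igusa), $(N,g_\infty,J_\infty)$ is holomorphically isometric to complex hyperbolic space, hence $N$ is biholomorphic to $\Bb_d$.

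\emph{Step 3 (transferring to $M$ --- the main obstacle).} The $C^{1,\alpha}$ convergence provides, for each $\rho>0$ and $n$ large, diffeomorphisms $\Phi_{n,\rho}$ from the $g_\infty$-ball $B(p_\infty,\rho)\subset N\cong\Bb_d$ onto open subsets of $M$ with $\Phi_{n,\rho}(p_\infty)=p$, which are $(1+o(1))$-bi-Lipschitz for $h_n$ and asymptotically $(J_\infty,J)$-holomorphic (the complex structures converge because all metrics are K\"ahler). Assume first --- the only case used in this paper --- that $M$ is a convex domain in $\Xb_d$. As $H(h_n)\le -\tfrac12$ and $\mathrm{Ric}(h_n)$ is uniformly bounded below, the Yau--Schwarz lemma provides $A>1$ independent of $n$ with $\tfrac1A k_M\le \sqrt{h_n}\le A\,k_M$ on $M$ (using $c_M=k_M$ for convex $M$), and by Theorem~\ref{thm:barth} the distance induced by $k_M$ is proper. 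Hence the $\Phi_{n,\rho}$ land in the fixed compact set $\overline{B}_{k_M}(p,2A\rho)$, so a normal-families argument yields, after a diagonal extraction as $\rho\to\infty$, a holomorphic map $F:\Bb_d\to M$; the comparison $\tfrac1A k_M\le\sqrt{h_n}\le A\,k_M$ together with the completeness of $(\Bb_d,g_\infty)$ force $F$ to be injective and proper. An injective proper holomorphic map between equidimensional complex manifolds is a biholomorphism onto an open and closed subset, so $F(\Bb_d)=M$ and $M$ is biholomorphic to $\Bb_d$. For a general simply connected $M$ one argues along the same lines after realizing $M$ as a bounded domain whose boundary is strictly pseudoconvex with spherical CR structure --- the sphericity forced by the $\epsilon$-pinching --- which is carried out in \cite[Theorem 3]{GK1981}. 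I expect this transfer step to be the crux: the delicate points are the uniform control of the complex structures along the Cheeger--Gromov limit and the verification that the limiting map, a priori only ``almost holomorphic'', is genuinely a biholomorphism onto $M$.
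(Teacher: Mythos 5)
The paper does not prove this statement: it is quoted verbatim as \cite[Theorem 3]{GK1981}, so there is no internal argument to compare yours against. Your sketch is therefore necessarily a different route from the source. Greene--Krantz argue in the language of 1981 --- stability of the Bergman kernel, $L^2$ $\bar\partial$-methods producing holomorphic coordinates in which the metric is close to the ball metric, and normal families of holomorphic embeddings --- whereas you use Ricci-flow smoothing plus Cheeger--Gromov compactness to produce a constant-curvature limit and the Hawley--Igusa space-form theorem to identify it. Steps 1 and 2 of your sketch are sound: the polarization bound $\norm{R(g)-R^{\mathrm{hyp}}(g)}_g\le\Lambda\sup\abs{H(g)+1}$ is correct, it does give uniform negative pinching of the sectional curvature (hence Cartan--Hadamard and infinite injectivity radius), and the smoothing/compactness/bootstrap chain legitimately yields a simply connected complete K\"ahler limit of constant holomorphic sectional curvature $-1$, i.e.\ the ball.

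The genuine gap is exactly where you locate it, and in two places it is more than a technicality. First, for general simply connected $M$ your Step 3 is circular: you ``realize $M$ as a bounded domain with spherical strictly pseudoconvex boundary \dots which is carried out in \cite[Theorem 3]{GK1981}'' --- that is, you invoke the theorem you are proving. Even the preliminary claim that such an $M$ embeds as a bounded domain needs an argument (it is where Yau--Schwarz and the Carath\'eodory metric must enter for non-convex $M$, and where simple connectivity is really used). Second, even in the convex case that the paper actually needs (the domain $\Cc_\infty\in\Xb_d$ in the proof of Theorem~\ref{thm:char_str_pconvex} is convex), your assertion that the limit map $F$ is injective does not follow merely from the maps $\Phi_{n,\rho}$ being bi-Lipschitz for $h_n$: a limit of injective maps need not be injective. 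What rescues it is the uniform two-sided comparison $\tfrac1A k_M\le\sqrt{h_n}\le A\,k_M$ with $A$ independent of $n$ (Yau--Schwarz in both directions plus $c_M=k_M$ for convex domains, as in Lemma~\ref{kob-lem1}), which passes to the limit and makes $F$ bi-Lipschitz from $(N,d_{g_\infty})$ to $(M,K_M)$, hence injective and proper; you should state this explicitly rather than leaving injectivity to the bi-Lipschitz property of the approximating maps. With that repair your outline proves the theorem for convex $M\in\Xb_d$, which suffices for this paper, but not the general statement as quoted.
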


Using results from~\cite{Zim18} we will establish the following. 

\begin{theorem}\label{thm:compact_set} For every $\alpha \in (0,1)$ and $d > 0$, there exists a subset $\mathbb{L}_{d,\alpha} \subset \Xb_d$ with the following properties:
\begin{enumerate}
\item $\mathbb{L}_{d,\alpha}$ is compact in $\Xb_d$, 
\item if $\Cc \in \mathbb{L}_{d,\alpha}$, then $\Cc$ is not biholomorphic to the unit ball, and 
\item if $\Omega \subset \Cb^d$ is a bounded convex domain with $C^{2,\alpha}$ boundary which is not strongly pseudoconvex, then there exists a domain $\Cc \in { \rm AL}(\Omega) \cap \mathbb{L}_{d,\alpha}$. 
\end{enumerate}
\end{theorem}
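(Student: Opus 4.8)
The plan is to take for $\mathbb{L}_{d,\alpha}$ a suitably normalized family of convex domains whose boundary contains a nontrivial affine disk, and to deduce all three properties from the rescaling construction of~\cite{Zim18} together with Theorem~\ref{type-thm}. The starting point is that failure of strong pseudoconvexity produces a degenerate direction along which the boundary is very flat. Indeed, if $\Omega \subset \Cb^d$ is a bounded convex domain with $C^{2,\alpha}$ boundary that is not strongly pseudoconvex, fix a convex $C^{2,\alpha}$ defining function $r$; then its Levi form is positive semidefinite along $\partial\Omega$ (since $\Omega$ is convex, hence pseudoconvex) but fails to be positive definite at some $p \in \partial\Omega$, so there is $v \in T^{\Cb}_p\partial\Omega \setminus \{0\}$ with $L_r(p)(v,\bar v) = 0$. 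As the real Hessian $\mathrm{Hess}_p\, r$ is positive semidefinite, a standard computation forces it to vanish on the real plane $\Span_{\Rb}\{v, Jv\}$, and so the $C^{2,\alpha}$ Taylor expansion of $r$ at $p$ gives $r(p + \zeta v) = O(\abs{\zeta}^{2+\alpha})$ as $\zeta \to 0$; that is, $\partial\Omega$ is ``flat to order $2+\alpha$'' along the complex line $p + \Cb v$. Following~\cite{Zim18} (see the proof of~\cite[Theorem 1.11]{Zim18}), one applies an affine rescaling $A_n$ adapted to the direction $v$ at base points $z_n \to p$ which are compactly divergent in $\Omega$; the order-$(2+\alpha)$ flatness guarantees that, after passing to a subsequence, $A_n(\Omega, z_n)$ converges in $\Xb_{d,0}$ to some $(\Cc, z_\infty)$ with $\Cc \in {\rm AL}(\Omega)$ and $\partial\Cc$ containing a nontrivial affine disk. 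This is the key technical input, which I would quote from~\cite{Zim18}.

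Next I would normalize. If $\Cc \in \Xb_d$ has a nontrivial affine disk $\ell(\Db) \subset \partial\Cc$, then every supporting real hyperplane $H$ of $\Cc$ at $\ell(0)$ contains the whole affine line $\ell(\Cb)$: writing $\lambda$ for a real affine function with $H = \{\lambda = 0\}$ and $\Cc \subset \{\lambda > 0\}$, the function $\zeta \mapsto \lambda(\ell(\zeta))$ is real affine, vanishes at $0$ and is $\geq 0$ on $\Db$, hence is identically $0$. Consequently, after composing the rescalings above with a further affine map --- which keeps the limit in ${\rm AL}(\Omega)$, since ${\rm AL}(\Omega)$ is $\Aff(\Cb^d)$-invariant --- we may assume that $\Cc \subset \{\Real(z_1) > 0\}$, that $\Delta_0 := \{0\} \times \Db \times \{0\}^{d-2} \subset \partial\Cc$, and (arguing as in the proof of Proposition~\ref{prop:compact_set}, or via a marked version of the theorem of Frankel~\cite{Fra91}) that $\Cc$ lies in a fixed compact set $K_0 \subset \Xb_d$. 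Set
\begin{align*}
\mathbb{L}_{d,\alpha} := \left\{ \Cc \in K_0 : \Cc \subset \{\Real(z_1) > 0\} \text{ and } \Delta_0 \subset \partial\Cc \right\}.
\end{align*}
Property (3) is then exactly the output of the rescaling. For property (1), $\mathbb{L}_{d,\alpha}$ is closed in the compact set $K_0$: if $\Cc_n \to \Cc$ with $\Cc_n \in \mathbb{L}_{d,\alpha}$, then $\Cc \subset \{\Real(z_1) \geq 0\}$ and, being a domain, $\Cc \subset \{\Real(z_1) > 0\}$; also $\Delta_0 \subset \overline{\Cc_n}$ for every $n$ forces $\Delta_0 \subset \overline{\Cc}$ by local Hausdorff convergence of these convex domains, so $\Delta_0 \subset \overline{\Cc} \setminus \Cc = \partial\Cc$. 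Hence $\mathbb{L}_{d,\alpha}$ is compact.

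For property (2): suppose some $\Cc \in \mathbb{L}_{d,\alpha}$ were biholomorphic to $\Bb_d$. Pulling back the Bergman metric of $\Bb_d$ yields a complete K\"ahler metric on $\Cc$ whose holomorphic bisectional curvature is pinched between two negative constants --- up to scaling, the Bergman metric of $\Bb_d$ is the complex hyperbolic metric, whose holomorphic bisectional curvature is pinched negative. Since $\Cc$ is convex, Theorem~\ref{type-thm}(2) would then imply that $\partial\Cc$ contains no positive-dimensional complex subvariety; but $\Delta_0 \subset \partial\Cc$ is an open subset of a complex affine line, so $\partial\Cc$ does contain one. This contradiction proves that no element of $\mathbb{L}_{d,\alpha}$ is biholomorphic to the unit ball.

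The step I expect to be the main obstacle is the rescaling borrowed from~\cite{Zim18}: extracting from the order-$(2+\alpha)$ flatness an affine limit whose boundary contains a genuine affine disk is delicate, and it is precisely here that the Hölder exponent is used in an essential way --- for merely $C^2$ boundary the statement fails, as shown by the example of Forn\ae ss and Wold~\cite{FW16}. The remaining work --- choosing the normalization so that the disk $\Delta_0$ and the supporting hyperplane $\{\Real(z_1) = 0\}$ sit in fixed position while $\Cc$ stays in a fixed compact subset of $\Xb_d$, and checking this condition is closed --- is routine, relying only on the convexity observation above together with (a marked version of) Frankel's compactness theorem.
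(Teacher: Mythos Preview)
Your construction has a genuine gap in property~(3): the claim that the rescaling from~\cite{Zim18} produces an affine limit whose boundary contains a nontrivial affine disk is false in general. The order-$(2+\alpha)$ flatness $r(p+\zeta v)=O(\abs{\zeta}^{2+\alpha})$ is too weak to force an affine disk in the limit; it only yields a sublinear growth of $\delta_{\Cc}$ in the flat direction. Concretely, take $\Omega=\{\abs{z_1}^2+\abs{z_2}^4<1\}\subset\Cb^2$: this is a smooth bounded convex domain which is not strongly pseudoconvex at $(1,0)$, and at that point the flatness is of order~$4>2+\alpha$. But every affine limit of $\Omega$ along sequences approaching $(1,0)$ is affinely equivalent to $\{\operatorname{Im}(z_1)>\abs{z_2}^4\}$, whose boundary is real-analytic of finite type and contains no nontrivial affine disk. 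Since this limit domain is homogeneous, further rescaling cannot produce an affine disk either. Hence ${\rm AL}(\Omega)\cap\mathbb{L}_{d,\alpha}=\emptyset$ for your set $\mathbb{L}_{d,\alpha}$, and property~(3) fails. You may be conflating this situation with Proposition~\ref{prop:rescale_infinite_type}, which does produce an affine disk but requires a point of \emph{infinite} type, a far stronger hypothesis than mere failure of strong pseudoconvexity.

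The paper's proof avoids this by defining $\mathbb{L}_{d,\alpha}$ not through an affine disk but through the quantitative condition $\delta_{\Cc}(rie_1;e_2)\leq r^{1/(2+\alpha)}$ for $r\geq 1$, together with $(\Cb\times\{0\})\cap\Cc$ being the upper half-plane; these are exactly what the rescaling of~\cite[Proposition~5.1]{Zim18} delivers and are closed conditions inside the compact set $\Kb_d$. Property~(2) is then \cite[Proposition~2.1]{Zim18}, which rules out biholomorphism with the ball directly from this growth condition rather than via Theorem~\ref{type-thm}. Your argument for~(2) via Theorem~\ref{type-thm} is valid for domains that \emph{do} have an affine disk in the boundary, but that class is simply too small to capture all the affine limits you need.
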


\begin{proof} 

Let $\Kb_d$ be the set defined in Proposition~\ref{prop:compact_set}. Next, let $\mathbb{L}_{d,\alpha}$ be the set of all convex domains $\Cc \in \Xb_d$ such that
\begin{enumerate}
\item $\Cc \in \Kb_d$,
\item $(\Cb \times\{0\}) \cap \Cc = \{ (z,0,\dots,0) : { \rm Im}(z) > 0\}$, and
\item $\delta_{\Cc}(rie_1; e_2) \leq r^{1/(2+\alpha)}$ for $r \geq 1$.
\end{enumerate}
Conditions (2) and (3) are clearly closed conditions in the local Hausdorff topology. Then, since $\Kb_d$ is compact in $\Xb_d$, we see that $\mathbb{L}_{d,\alpha}$ is compact in $\Xb_d$.

By~\cite[Proposition 5.1]{Zim18},  if $\Omega \subset \Cb^d$ is a bounded convex domain with $C^{2,\alpha}$ boundary which is not strongly pseudoconvex, then there exists a domain $\Cc \in { \rm AL}(\Omega) \cap \mathbb{L}_{d,\alpha}$. By~\cite[Proposition 2.1]{Zim18}, if $\Cc \in \mathbb{L}_{d,\alpha}$, then $\Cc$ is not biholomorphic to the unit ball.
\end{proof}

\begin{proof}[Proof of Theorem~\ref{thm:char_str_pconvex}] By scaling it suffices to consider the case when $c=1$. Fix $\alpha \in (0,1)$ and $d \geq 2$. Using Theorem~\ref{thm:curv}, it is enough to prove the following: there exists some $\epsilon = \epsilon(\alpha, d) >0$ such that if $\Omega \subset \Cb^d$ is a bounded convex domain with $C^{2,\alpha}$ boundary and there exists a complete K{\"a}hler metric $g$ on $\Omega$ with
\begin{align*}
-1-\epsilon \leq H(g) \leq -1 + \epsilon
\end{align*}
outside a (possibly empty) compact subset of $\Omega$, then $\Omega$ is strongly pseudoconvex. 

Suppose for a contradiction that this statement is false. Then for every $n \in \Nb$ there exist $\Omega_n \subset \Cb^d$ a bounded convex domain with $C^{2,\alpha}$ boundary which is not strongly pseudoconvex, a compact set $K_n \subset \Omega_n$, and a complete K{\"a}hler metric $g_n$ on $\Omega_n$ with
\begin{align*}
-1-1/n \leq H(g_n) \leq -1 + 1/n
\end{align*}
on $\Omega_n \setminus K_n$.

By Theorem~\ref{thm:compact_set}, there exists some $\Cc_n \in { \rm AL}(\Omega_n) \cap \mathbb{L}_{d,\alpha}$. Since $\mathbb{L}_{d,\alpha} \subset \Xb_d$ is a compact set, we can pass to a subsequence and assume that $\Cc_n$ converges to some $\Cc_\infty$ in $\mathbb{L}_{d,\alpha}$. By Theorem~\ref{thm:compact_set}, $\Cc_\infty$ is not biholomorphic to the unit ball. Then Theorem~\ref{thm:GK} and the following Claim give a contradiction. \\

\noindent \textbf{Claim.} For every $\delta> 0$, there is a complete K{\"a}hler metric $h$ on $\Cc_\infty$ such that 
\begin{align*}
-1 -\delta \leq H(h) \leq -1+\delta.
\end{align*}

\noindent \emph{Proof of Claim:} Fix $\delta > 0$. For $n \geq 2/\delta$, we have 
\begin{align*}
-1-\delta/2 \leq H(g_n)  \leq -1+\delta/2
\end{align*}
in a neighborhood of $\partial \Omega_n$. So Theorem~\ref{thm:affine_closures} with $\epsilon = \delta/2$ implies that  there exist constants $A>1$ and $\{ C_q\}_{q \geq 0} $ such that for every $n \geq 2/\delta$ there exists a complete K{\"a}hler metric $h_n$ on $\Cc_n$ with
\begin{enumerate}
\item $-1-\delta \leq H(h_n)  \leq -1+\delta$ on all of $\Cc_n$,
\item $h_n$ and $k_{\Cc_n}$ are $A$-bi-Lipschitz on $\Cc_n$, and
\item 
\begin{align*}
\sup_{z \in \Cc_n} \norm{\nabla^q R(h_n) }_{h_n} \leq C_q.
\end{align*}
\end{enumerate}

Using Proposition~\ref{prop:rescaling_metrics}  and possibly passing to a subsequence, we can assume that $h_n$ converges locally uniformly in the $C^\infty$ topology to a K{\"a}hler metric $h$ on $\Cc_\infty$. Then 
\begin{align*}
-1 -\delta \leq H(h) \leq -1+\delta
\end{align*}
and the Claim is established. 

\end{proof}

\appendix

\section{Proof of Theorem~\ref{conv-thm}}\label{app:pf_of_conv-thm}

For the rest of the section suppose that $\Omega_n$ converges to $\Omega$ in $\Xb_d$. We first establish the following lemma.

\begin{lemma}\label{lem:compact_inclusion} For every compact set $K \subset \Omega$, there exists $N \geq 0$ such that $K \subset \Omega_n$ for all $n \geq N$. \end{lemma}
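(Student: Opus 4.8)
The statement to prove is Lemma~\ref{lem:compact_inclusion}: if $\Omega_n \to \Omega$ in $\Xb_d$, then every compact $K \subset \Omega$ is eventually contained in $\Omega_n$. The plan is to argue by contradiction using the definition of convergence in $\Xb_d$ together with convexity. First I would recall what convergence means here: there is some $R_0 \geq 0$ so that $d_H^{(R)}(\Omega_n,\Omega) \to 0$ for all $R \geq R_0$, i.e. the local Hausdorff distance of the truncations $\Omega_n^{(R)}$ and $\Omega^{(R)}$ tends to zero. Since $K$ is compact I can fix $R > R_0$ large enough that $K \subset \Bb_d(0;R-1)$, say, so that $K$ lies comfortably inside the truncation window and the ``cutting at radius $R$'' does not interfere with points of $K$.

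The key quantitative input is that $K$ is a compact subset of the \emph{open} set $\Omega$, hence $\rho := \delta_\Omega(K) = \inf_{z \in K}\operatorname{dist}(z, \partial\Omega) > 0$ (with the convention that this is $+\infty$ if $\Omega = \Cb^d$, in which case the statement is easier and I would treat it separately or note $\Bb_d(0;R)\subset\Omega$). Now suppose for contradiction that, after passing to a subsequence, there are points $z_n \in K \setminus \Omega_n$. By compactness of $K$, pass to a further subsequence so that $z_n \to z_\infty \in K \subset \Omega$. The point $z_\infty$ has a Euclidean ball $\Bb_d(z_\infty; \rho/2) \subset \Omega$, and this ball is contained in $\Bb_d(0;R)$ by our choice of $R$; so $\Bb_d(z_\infty;\rho/2) \subset \Omega^{(R)}$. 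Here is where I need to feed in the Hausdorff convergence: for $n$ large, $d_H^{(R)}(\Omega_n,\Omega)$ is tiny, so every point of $\Omega^{(R)}$ (in particular the center region of that ball) is within distance, say, $\rho/8$ of $\Omega_n^{(R)} \subset \Omega_n$.

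The remaining issue is that Hausdorff-closeness of $\Omega_n$ to $\Omega$ near $z_\infty$ does not by itself put $z_\infty$ (or $z_n$) \emph{inside} $\Omega_n$ — it only says $\Omega_n$ comes close. This is the main obstacle, and convexity of $\Omega_n$ is exactly what resolves it. The idea: pick a fixed ``core'' point $p \in \Omega$ with some definite ball $\Bb_d(p;\eta)\subset\Omega$ around it, contained in $\Bb_d(0;R)$; for $n$ large, Hausdorff-closeness forces $\Omega_n$ to contain a point within $\eta/2$ of $p$, and (by the two-sided Hausdorff bound applied to the open set $\Omega$) one can in fact show $\Bb_d(p;\eta/4) \subset \Omega_n$ for large $n$ — the standard argument is that if $\Omega_n$ omitted a point $w$ of $\Bb_d(p;\eta/4)$, convexity would produce a supporting hyperplane at a boundary point of $\Omega_n$ near $w$, and on the far side of that hyperplane $\Omega_n$ misses a whole half-ball of definite size, contradicting that $\Omega_n$ must be Hausdorff-close to the fat set $\Omega \supset \Bb_d(p;\eta)$. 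With a genuine ball $\Bb_d(p;\eta/4)\subset\Omega_n$ secured, and a ball $\Bb_d(z_n;\rho/4)\subset\Omega$ around each $z_n$ (for large $n$, since $z_n\to z_\infty$ and $\delta_\Omega(z_\infty)\geq\rho$), I again use convexity: Hausdorff-closeness gives a point $q_n \in \Omega_n$ with $\|q_n - z_n\| < \rho/8$; then the segment from $p$-region to $q_n$ lies in $\Omega_n$ by convexity, and thickening it using the ball at $p$ shows a neighborhood of $q_n$ of definite radius lies in $\Omega_n$, forcing $z_n \in \Omega_n$ — the contradiction. I would write this up by isolating the sublemma ``if $\Omega_n\to\Omega$ and $\Bb_d(x;s)\subset\Omega$ with $\Bb_d(x;s)\subset\Bb_d(0;R)$, $R>R_0$, then $\Bb_d(x;s/2)\subset\Omega_n$ for all large $n$'', prove it once via the supporting-hyperplane/convexity argument above, and then the Lemma follows by covering $K$ with finitely many such balls $\Bb_d(x_i; s_i)$ (possible since $K$ is compact in the open set $\Omega$) and taking the max of the finitely many thresholds $N_i$.
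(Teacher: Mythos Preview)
Your proposal is correct, and the final plan (isolate the sublemma ``$\Bb_d(x;s)\subset\Omega\cap\Bb_d(0;R)\Rightarrow\Bb_d(x;s/2)\subset\Omega_n$ for large $n$'', then cover $K$ by finitely many such balls) is a clean and valid argument. The supporting-hyperplane step is exactly right: if $w\in\Bb_d(x;s/2)\setminus\Omega_n$, a real hyperplane through $w$ separating $w$ from $\Omega_n$ leaves a point of $\Bb_d(x;s)\subset\Omega^{(R)}$ at distance $\geq s/2$ from $\Omega_n$, contradicting $d_H^{(R)}(\Omega_n,\Omega)\to 0$. Note that once you have this sublemma, the intermediate discussion with the core point $p$, the segment from $p$ to $q_n$, and the thickening is entirely unnecessary and should be dropped; it only obscures the argument.

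The paper's proof takes a different, ``softer'' route. Rather than a quantitative Hausdorff estimate, it argues by contradiction: take $k_j\in K\setminus\Omega_{n_j}$ with $k_j\to k\in K$, pick supporting affine functionals $\ell_j(z)=a_j+\langle b_j,z\rangle$ with $\ell_j(k_j)=0$, ${\rm Im}\,\ell_j(\Omega_{n_j})>0$, normalize $\norm{b_j}=1$, and pass to a subsequential limit $\ell$. Using that every $z\in\Omega$ is a limit of points in $\Omega_n$, one gets ${\rm Im}\,\ell(\Omega)\geq 0$, hence $>0$ by openness; but ${\rm Im}\,\ell(k)=\lim{\rm Im}\,\ell_j(k_j)=0$, a contradiction. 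Both arguments hinge on convexity via supporting hyperplanes; yours extracts an explicit distance lower bound while the paper's extracts a limiting separating functional. Your sublemma has the mild advantage of giving a reusable local statement with an effective threshold, whereas the paper's compactness argument is shorter to write and avoids any bookkeeping with radii.
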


\begin{proof} The proof follows directly from the definition of convergence in $\Xb_d$ and convexity. Indeed, fix a compact set $K \subset \Omega$ and suppose that there does not exist some $N \geq 0$ with $K \subset \Omega_n$ for all $n \geq N$. Then there exist $n_j \rightarrow \infty$ and $k_j \in K \setminus \Omega_{n_j}$. By passing to a subsequence we can assume that $k_j \rightarrow k \in K$. 

Since $\Omega_{n_j}$ is convex, there exists some affine map $\ell_j : \Cb^d \rightarrow\Cb$ such that $\ell_j(k_j)=0$ and ${ \rm Im}( \ell_j(\Omega_{n_j})) > 0$. Next pick $a_j \in \Cb$ and $b_j \in \Cb^d$ such that $\ell_j(z)= a_j + \ip{b_j,z}$. We can assume that $\norm{b_j}=1$ and then 
\begin{align*}
\abs{a_j} = \abs{\ip{b_j, k_j}} \leq \norm{k_j}.
\end{align*}
So by passing to a subsequence we can suppose that $\ell_j$ converges to a non-constant affine map $\ell: \Cb^d \rightarrow \Cb$. If $z \in \Omega$, then there exists $z_n \in \Omega_n$ such that $z_n \rightarrow z$. So 
\begin{align*}
{ \rm Im} (\ell(z)) = \lim_{j \rightarrow \infty} { \rm Im}(\ell_j(z_{n_j})) \geq 0.
\end{align*}
Since $z \in \Omega$ was arbitrary we have
\begin{align*}
{ \rm Im}\left( \ell\left(\Omega\right)\right) \geq 0.
\end{align*}
Since $\Omega$ is open we must have ${ \rm Im}\left( \ell\left(\Omega\right)\right) > 0$. But then 
\begin{align*}
0 < { \rm Im}(\ell(k)) = \lim_{j \rightarrow \infty} { \rm Im}(\ell_j(k_j)) = \lim_{j \rightarrow \infty} 0 = 0
\end{align*}
which is a contradiction. 
\end{proof}

Let $S = \{ v \in \Cb^d : \norm{v}=1\}$. To prove Theorem~\ref{conv-thm}, it is enough to prove the convergence of the Kobayashi metrics $k_{\Omega_n}$ on compact subsets of $\Omega \times S$. Fix a compact subset  $K \subset \Omega$. Then since $K \times S$ is compact, it is enough to consider a sequence $(p_n, v_n) \in K  \times S$ with 
\begin{align*}
\lim_{n \rightarrow \infty} (p_n, v_n) = (p,v)
\end{align*}
and show that 
$$
\lim_{n \rightarrow \infty}k_{\Omega_n}(p_n;v_n) = k_{\Omega}(p;v). 
$$
Notice that Lemma~\ref{lem:compact_inclusion} implies that $p_n \in \Omega_n$ for $n$ sufficiently large and hence $k_{\Omega_n}(p_n; v_n)$ is well defined for $n$ sufficiently large. 

\begin{lemma}\label{2-lem}
\begin{align*}
\limsup_{n \rightarrow \infty}k_{\Omega_n}(p_n;v_n) \leq k_\Omega(p;v).
\end{align*}
\end{lemma}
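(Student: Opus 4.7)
The plan is to prove the upper semicontinuity bound by producing, for each $n$ large, a holomorphic disk in $\Omega_n$ through $p_n$ with derivative close to a specified multiple of $v_n$, built by perturbing a near-optimal disk in $\Omega$ through $p$ with derivative in the direction $v$. The containment of the perturbed disks in $\Omega_n$ will follow from Lemma~\ref{lem:compact_inclusion}.

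More concretely, fix $\epsilon > 0$. By the definition of the infinitesimal Kobayashi (pseudo)metric, choose $\lambda > 0$ and a holomorphic map $f \colon \Db \to \Omega$ with $f(0) = p$, $f'(0) = \lambda v$, and $1/\lambda < k_\Omega(p;v) + \epsilon$. Next I would regularize $f$ by shrinking the disk: for $t \in (0,1)$ set $f_t(z) := f(tz)$, so that $f_t(\overline{\Db})$ is a \emph{compact} subset of $\Omega$, $f_t(0) = p$, and $f_t'(0) = t\lambda v$. Since $f_t(\overline{\Db})$ is compact in $\Omega$, there exists $r > 0$ (depending on $t$) with $f_t(\overline{\Db}) + \overline{\Bb_d(0;r)} \subset \Omega$.

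Now I would absorb the perturbation of basepoint and direction by an affine correction. Define
\begin{equation*}
g_n(z) := f_t(z) + (p_n - p) + t\lambda \, z\,(v_n - v).
\end{equation*}
Then $g_n(0) = p_n$ and $g_n'(0) = t\lambda v_n$. For all sufficiently large $n$ we have $\norm{p_n - p} + t\lambda\norm{v_n - v} < r$, hence $g_n(\Db) \subset f_t(\overline{\Db}) + \Bb_d(0;r)$, which is a compact subset of $\Omega$. By Lemma~\ref{lem:compact_inclusion}, this compact set sits inside $\Omega_n$ for $n$ large, so $g_n \colon \Db \to \Omega_n$ is a competitor in the defining infimum of $k_{\Omega_n}(p_n;v_n)$. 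The defining property of $k_{\Omega_n}$ then yields $k_{\Omega_n}(p_n;v_n) \leq 1/(t\lambda)$ for all $n$ large enough, whence $\limsup_n k_{\Omega_n}(p_n;v_n) \leq 1/(t\lambda)$. Letting $t \to 1^-$ gives $\limsup_n k_{\Omega_n}(p_n;v_n) \leq 1/\lambda < k_\Omega(p;v)+\epsilon$, and letting $\epsilon \to 0^+$ finishes the argument.

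The only non-routine point is ensuring the perturbed disks $g_n$ actually take values in $\Omega_n$, not merely in $\Omega$; this is precisely where the local Hausdorff convergence intervenes, and it is handled cleanly by the compact-set inclusion already supplied by Lemma~\ref{lem:compact_inclusion}. Everything else is bookkeeping with the definition of the Kobayashi infinitesimal metric. A mild subtlety is that the basepoint and the direction are perturbed \emph{simultaneously}, which is why the two-parameter affine correction (translation by $p_n - p$ and a linear correction $t\lambda z(v_n - v)$) is convenient: it preserves holomorphicity and controls both $g_n(0)$ and $g_n'(0)$ without affecting the image modulo an arbitrarily small Euclidean perturbation.
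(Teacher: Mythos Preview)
Your argument is correct. The only cosmetic slip is calling $f_t(\overline{\Db}) + \Bb_d(0;r)$ ``a compact subset of $\Omega$'': this set is open, but its closure $f_t(\overline{\Db}) + \overline{\Bb_d(0;r)}$ is the compact set you already arranged to lie in $\Omega$, and that is what Lemma~\ref{lem:compact_inclusion} should be applied to. This does not affect the proof.

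Your route differs from the paper's in one structural respect. The paper chooses, for each $n$, an \emph{extremal} disk $g_n:\Db\to\Omega$ with $g_n(0)=p_n$, $g_n'(0)=v_n/k_\Omega(p_n;v_n)$, shrinks it by a factor $r\in(0,1)$ so that the image lies in a fixed $K_\Omega$-bounded (hence compact) set, and then invokes the continuity of $k_\Omega$ on the taut domain $\Omega$ to pass from $k_\Omega(p_n;v_n)$ to $k_\Omega(p;v)$. Thus the paper uses two consequences of $\Omega\in\Xb_d$ being taut/proper: existence of extremal disks and continuity of $k_\Omega$. You instead fix a single near-optimal disk at $(p,v)$ and correct it by the affine map $z\mapsto (p_n-p)+t\lambda z(v_n-v)$ to hit $(p_n,v_n)$ exactly. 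Your approach is more elementary---it needs neither extremal disks nor continuity of $k_\Omega$---and in fact would go through for any family of domains satisfying the compact-inclusion property of Lemma~\ref{lem:compact_inclusion}, not just convex ones. The paper's approach avoids the explicit perturbation at the cost of invoking tautness, which is available here anyway.
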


\begin{proof}
Fix some $r \in (0,1)$ and let $D_r:=\{\zeta \in \Cb : \ |\zeta| < r\}$. Then the set 
\begin{align*}
\wh{K} = \left\{ g(\zeta) : g : \Db \rightarrow \Omega \text{ holomorphic, } g(0) \in K, \text{ and } \zeta \in \overline{D_r} \right\}
\end{align*}
is compact in $\Omega$ since the Kobayashi distance is proper. 

For every $n$, let $g_n:\mathbb D \rightarrow \Omega$ be a holomorphic map such that $g_n(0) = p_n$, $g_n'(0) = v_n /\alpha_n$, and 
$$
\alpha_n = k_{\Omega}(p_n,v_n).
$$
Since $g_n(D_r) \subset \wh{K}$, there exists some $N_r \geq 0$ such that $g_n(D_r) \subset \Omega_n$ for all $n \geq N_r$. Then define  $g_{n,r} : \Db \rightarrow \Omega_n$ by $g_{n,r}(z) = g_n(rz)$. Then $g_{n,r}(0)=p_n$ and $g_{n,r}^\prime(0)=rv_n/\alpha_n$. So 
$$
k_{\Omega_n}(p_n;v_n) \leq \frac{\alpha_n}{r} =\frac{1}{r}k_{\Omega}(p_n;v_n)
$$
when $n \geq N_r$.

Since the Kobayashi distance on $\Omega$ is proper, $\Omega$ is a taut complex manifold. So $k_\Omega$ is continuous by~\cite[Proposition 2.3.34]{Aba89}. Hence 
\begin{align*}
\lim_{n \rightarrow \infty}k_{\Omega}(p_n;v_n) = k_\Omega(p;v)
\end{align*}
and so
\begin{align*}
\limsup_{n \rightarrow \infty} k_{\Omega_n}(p_n;v_n) \leq  \frac{1}{r} \limsup_{n \rightarrow \infty} k_{\Omega}(p_n,v_n) = \frac{1}{r}k_{\Omega}(p;v).
\end{align*}
Then since $r \in (0,1)$ is arbitrary,
\begin{equation}\label{right-ineq}
\limsup_{n \rightarrow \infty} k_{\Omega_n}(p_n;v_n) \leq k_{\Omega}(p;v).
\end{equation}
\end{proof}

\begin{lemma}\label{3-lem}
\begin{align*}
k_\Omega(p;v) \leq \liminf_{n \rightarrow \infty}k_{\Omega_n}(p_n;v_n).
\end{align*}
\end{lemma}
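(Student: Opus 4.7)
My plan is to realize $k_\Omega(p;v)$ as a limit of derivatives of nearly extremal holomorphic disks in the $\Omega_n$. If $\alpha := \liminf_n k_{\Omega_n}(p_n;v_n) = +\infty$ there is nothing to prove, so after passing to a subsequence assume $k_{\Omega_n}(p_n;v_n) \to \alpha < +\infty$. For each $n$, choose a holomorphic $g_n : \Db \to \Omega_n$ with $g_n(0)=p_n$ and $g_n'(0) = \beta_n v_n$, where $\beta_n > 0$ satisfies $1/\beta_n \le k_{\Omega_n}(p_n;v_n) + 1/n$; thus $\beta_n \to \beta := 1/\alpha > 0$. Once I extract a locally uniform limit $g:\Db\to\Omega$ along a subsequence of $\{g_n\}$, I will have $g(0)=p$ and $g'(0)=\beta v$, so $k_\Omega(p;v)\le 1/\beta = \alpha$.

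The central task is normality. By the Kobayashi contraction, for every $z$ with $|z|\le r<1$,
\begin{equation*}
K_{\Omega_n}(p_n, g_n(z)) \le K_\Db(0,z) \le \tfrac12 \log \tfrac{1+r}{1-r} =: M_r ,
\end{equation*}
so $g_n(\{|z|\le r\})$ sits inside the Kobayashi ball $B_{K_{\Omega_n}}(p_n,M_r)$. Normality on $\{|z|<r\}$ therefore reduces---via Montel---to the following uniform estimate, which I regard as the main obstacle: for every $M>0$ there exist $R>0$ and $N\ge 0$ such that $B_{K_{\Omega_n}}(p_n,M)\subset \Bb_d(0;R)$ for every $n\ge N$.

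To establish this uniform boundedness I would argue by contradiction: suppose $n_k\to\infty$ and $w_k\in\Omega_{n_k}$ satisfy $K_{\Omega_{n_k}}(p_{n_k},w_k)\le M$ and $\|w_k\|\to\infty$. Since $\Omega\in\Xb_d$, by \cite{BS08} one can produce complex-linear functionals $\ell_1,\dots,\ell_N$ on $\Cb^d$ spanning $(\Cb^d)^*$ and constants $c_i\in\Rb$ with $\Omega\subset \bigcap_i\{\Real\ell_i<c_i\}$. Using the local Hausdorff convergence $\Omega_n\to\Omega$ and the convexity of each $\Omega_n$, the supporting half-spaces of $\Omega$ persist, up to an $o(1)$ perturbation, as supporting half-spaces of $\Omega_n$, so $\Omega_n\subset \bigcap_i \{\Real\ell_i < c_i+\epsilon_n\}$ with $\epsilon_n\to 0$. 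The complex-linear map $\Phi:=(\ell_1,\dots,\ell_N):\Cb^d\to\Cb^N$ is Kobayashi-decreasing and sends $\Omega_n$ into a fixed translate of a product of left half-planes $\Hc^N$; such products are Kobayashi-complete with Kobayashi balls that are Euclidean-bounded, and since $\Phi$ has maximal rank $d$, pulling back this boundedness yields the desired Euclidean bound on $B_{K_{\Omega_n}}(p_n,M)$, contradicting $\|w_k\|\to\infty$.

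Once normality is in hand, Montel extracts a locally uniform limit $g:\Db\to\overline\Omega$ with $g(0)=p$ and $g'(0)=\beta v$; the inclusion in $\overline\Omega$ follows from the local Hausdorff convergence. Finally I upgrade $g(\Db)\subset\overline\Omega$ to $g(\Db)\subset\Omega$ via the maximum principle: if $g(z_0)\in\partial\Omega$ for some $z_0\in\Db$, convexity of $\Omega$ yields a real-affine $L:\Cb^d\to\Rb$ with $L(g(z_0))=0$, $L\le0$ on $\overline\Omega$, and $L<0$ on $\Omega$; since every real-linear functional on $\Cb^d$ is the real part of a complex-linear one, $L\circ g$ is harmonic, attains its maximum $0$ at the interior point $z_0$, and is therefore identically $0$, contradicting $L(g(0))=L(p)<0$. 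Hence $g:\Db\to\Omega$ is an admissible competitor, giving $k_\Omega(p;v)\le 1/\beta=\alpha$ and completing the lemma.
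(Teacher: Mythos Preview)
Your overall strategy---nearly extremal disks $g_n:\Db\to\Omega_n$, Montel, limit disk as competitor---matches the paper's, and your maximum-principle upgrade from $g(\Db)\subset\overline\Omega$ to $g(\Db)\subset\Omega$ is a clean step the paper leaves implicit. However, the normality argument has a genuine gap: the claim that $\Omega_n\subset \bigcap_i \{\Real\ell_i < c_i+\epsilon_n\}$ with the \emph{same} functionals $\ell_i$ is false. Local Hausdorff convergence does not force $\Omega_n$ into a fixed half-space containing $\Omega$. Concretely, in $\Xb_1$ take $\Omega=\{\Real z<0\}$ and $\Omega_n=\{z\in\Cb:\Real z<(\Imag z)/n\}$; then $\Omega_n\to\Omega$ in the local Hausdorff topology, yet $1+2ni\in\Omega_n$, so $\Omega_n\not\subset\{\Real z<c\}$ for any finite $c$. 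Hence you cannot embed all $\Omega_n$ into a fixed product of half-planes via a single linear map $\Phi$, and the uniform bound on Kobayashi balls does not follow. (Allowing the supporting half-spaces to depend on $n$ might be salvageable, but then you must show the perturbed normals still span and the associated $\Phi_n$ have uniformly bounded inverses---nontrivial extra work you have not done.)

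The paper's normality argument is different and sidesteps this issue. Assuming by contradiction that $\norm{f_{n_j}(y_j)}\to\infty$ for some $y_j$ in a compact subset of $\Db$, it extracts the escape direction $v=\lim f_{n_j}(y_j)/\norm{f_{n_j}(y_j)}$, chooses $q\in(p+\Cb v)\cap\partial\Omega$, and then picks supporting affine functionals $\ell_j$ of $\Omega_{n_j}$ (not of $\Omega$) at nearby points $q_j\in\partial\Omega_{n_j}$ with $q_j\to q$, normalized by $\ell_j(q_j)=0$ and $\ell_j(p)=i$. Passing to a limit $\ell(z)=a+\ip{b,z}$ and using that $\ell_j(f_{n_j}(y_j))$ stays in a bounded Kobayashi ball of the half-plane $\Hc$, one deduces $\ip{b,v}=0$; since $q\in p+\Cb v$, this forces $i=\ell(p)=\ell(q)=0$, the desired contradiction. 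The crucial difference from your approach is that the supporting functionals are chosen for $\Omega_{n_j}$ and adapted to the escape direction, rather than fixed once and for all from $\Omega$.
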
 

\begin{proof}
Let $f_n:\mathbb D \rightarrow \Omega_n$ be a holomorphic map such that $f_n(0) = p_n$, $f'_n(0) = v_n /\alpha_n$, and
$$
\alpha_n = k_{\Omega_n}(p_n;v_n).
$$
Next pick $n_j \rightarrow \infty$ such that 
\begin{align*}
\liminf_{n \rightarrow \infty}k_{\Omega_n}(p_n;v_n) = \lim_{j \rightarrow \infty}k_{\Omega_{n_j}}(p_{n_j};v_{n_j}).
\end{align*}
By Lemma~\ref{2-lem}
\begin{equation}\label{eq:upper_bd_alpha}
\alpha : = \liminf_{n \rightarrow \infty}k_{\Omega_n}(p_n;v_n) \leq \limsup_{n \rightarrow \infty} k_{\Omega_n}(p_n;v_n)  < +\infty.
\end{equation}

\vspace{2mm}
\noindent{\it Claim.} Passing to a subsequence if necessary, we may assume that $f_{n_j}$ converges locally uniformly to a holomorphic map $f:\mathbb D \rightarrow \Omega$. 

\vspace{2mm}
\noindent{\it Proof of the Claim.} By Montel's theorem, it is enough to fix a compact set $Y \subset \Db$ and show that 
\begin{align*}
\sup_{j \geq 0} \max_{y \in Y} \norm{ f_{n_j}(y)} <+\infty.
\end{align*}
Suppose not, then after possibly passing to a subsequence  there exists $y_j \in Y$ such that 
\begin{align}\label{eq:norm_go_to_infty}
\lim_{j \rightarrow \infty} \norm{f_{n_j}(y_j)} =\infty.
\end{align}
 Let 
\begin{align*}
v_j := \frac{f_{n_j}(y_j)}{\norm{f_{n_j}(y_j)}}.
\end{align*}
By passing to a subsequence we can assume that $v_j \rightarrow v$. Next fix some 
\begin{align*}
q \in ( p + \Cb \cdot v) \cap \partial \Omega.
\end{align*}
Then there exists $q_j \in\partial \Omega_{n_j}$ such that $q_{j} \rightarrow q$. By passing to another subsequence we can assume that $p \in \Omega_{n_j}$ for all $j$. Since each $\Omega_{n_j}$ is convex, we can find an affine map $\ell_j : \Cb^d \rightarrow\Cb$ such that $\ell_j(q_j)=0$, $\ell_j(p)=i$, and ${ \rm Im}( \ell_j(\Omega_{n_j})) > 0$.  By passing to another subsequence we can suppose that $\ell_j$ converges locally uniformly to an affine map $\ell: \Cb^d \rightarrow \Cb$. Then $\ell(q)=0$, $\ell(p)=i$, and ${ \rm Im}(\ell(\Omega)) > 0$. 

Since $p_n \rightarrow p$, by passing to a subsequence we can suppose there exists some domain $D \subset \Omega$ such that $p_{n_j} \in D$ for all $j \geq 0$ and $\overline{D}$ is a compact subset of $\Omega$. Then by Lemma~\ref{lem:compact_inclusion}  and possibly passing to a subsequence, we can suppose that $\overline{D} \subset \Omega_{n_j}$ for all $j \geq 0$. 

Next let $\Hc := \{ z \in \Cb : { \rm Im}(z)>0\}$. Then 
\begin{align*}
K_{\Hc}( \ell_j( f_{n_j}(y_j)), i) &=K_{\Hc}( \ell_j( f_{n_j}(y_j)), \ell_j(p) )\leq K_{\Omega_{n_j}}(f_{n_j}(y_j), p) \\
& \leq K_{\Omega_{n_j}}(f_{n_j}(y_j), p_{n_j}) + K_{\Omega_{n_j}}(p_{n_j}, p) \\
& \leq  K_{\Db}(y_j, 0) + K_D(p_{n_j}, p).
\end{align*}
Since $Y \subset \Db$ is compact and $p_n \rightarrow p$, we see that 
\begin{align*}
\sup_{j \geq 0} K_{\Hc}( \ell_j( f_{n_j}(y_j)), i) \leq \sup_{y \in Y} K_{\Db}(y,0) + \sup_{j\geq0} K_D(p_{n_j}, p) <  +\infty
\end{align*}
which implies that 
\begin{align}\label{eq:norm_bd}
\sup_{j \geq 0} \abs{\ell_j( f_{n_j}(y_j))} < + \infty. 
\end{align}

Next pick $a,a_j \in \Cb$ and $b,b_j \in \Cb^d$ such that $\ell_j(z)= a_j + \ip{b_j,z}$ and $\ell(z) = a +\ip{b,z}$. Then $a_j \rightarrow a$ and $b_j \rightarrow b$. Further
\begin{align*}
\ell_j( f_{n_j}(y_j)) = a_j + \ip{b_j, f_{n_j}(y_j)} = a_j + \norm{f_{n_j}(y_j)} \ip{ b_j, v_j}.
\end{align*}
Then Equations~\eqref{eq:norm_go_to_infty} and~\eqref{eq:norm_bd} imply that 
\begin{align*}
\ip{b,v}= \lim_{j\rightarrow \infty} \ip{b_j,v_j} = 0. 
\end{align*}
Since $q=z_0+\lambda v$ for some $\lambda \in \Cb$ we have
\begin{align*}
i=\ell(z_0)=a+\ip{b,z_0} = a+\ip{b,q} + \ip{b,z_0-q} = \ell(q) - \lambda \ip{b,v}=0.
\end{align*}
So we have a contradiction. Thus by Montel's theorem and passing to a subsequence if necessary, we may assume that $f_{n_j}$ converges locally uniformly to a holomorphic map $f:\mathbb D \rightarrow \Omega$. This completes the proof of the claim.

Then 
\begin{align*}
f'(0) = \lim_{j \rightarrow \infty}f'_{n_j}(0) =  \lim_{j \rightarrow \infty} v_{n_j}/\alpha_{n_j}.
\end{align*}
By Equation~\eqref{eq:upper_bd_alpha}
\begin{align*}
\alpha = \lim_{j \rightarrow \infty} \alpha_{n_j} < +\infty
\end{align*}
and $v_n$ converges to $v$. So we must have $\alpha > 0$. Thus $f'(0) = v/\alpha$. Then
\begin{align*}
k_\Omega(p;v) \leq \alpha=\lim_{j \rightarrow \infty}\alpha_{n_j} = \liminf_{n \rightarrow \infty} k_{\Omega_n}(p_n;v_n).
\end{align*}
\end{proof}

Finally, it follows from Lemmas~\ref{2-lem} and \ref{3-lem} that
$$
\lim_{n \rightarrow \infty}k_{\Omega_n}(p_n;v_n) = k_{\Omega}(p;v). \qedhere
$$.

\end{document}